\newcolumntype{L}[1]{>{\raggedright\let\newline\\\arraybackslash\hspace{0pt}}m{#1}}
\newtheorem{definition}{Definition}[section]
\newtheorem{proposition}[definition]{Proposition}
\newtheorem{theorem}[definition]{Theorem}
\newtheorem{lemma}[definition]{Lemma}
\newtheorem{condition}[definition]{Condition}
\newtheorem{example}[definition]{Example}
\newtheorem{corollary}[definition]{Corollary}
\newtheorem{remark}[definition]{Remark}
\newcommand{\C}{\mathbb{C}}
\newcommand{\two}{\mathbf{2}}
\newcommand{\Out}{\textnormal{Out}}
\newcommand{\Aut}{\textnormal{Aut}}
\newcommand{\Der}{\textnormal{Der}}
\newcommand{\inj}{complete}
\newcommand{\binj}{complete${}^*$}
\newcommand{\pinj}{proto-complete}
\newcommand{\scmp}{strong-complete}
\newcommand{\injness}{completeness}
\newcommand{\Injness}{Completeness}
\newcommand{\pinjness}{proto-completeness}
\newcommand{\binjness}{completeness${}^*$}
\newcommand{\scmpness}{strong-completeness}
\newcommand{\Rng}{\mathbf{Rng}}
\newcommand{\z}[1]{\mathbb{Z}/#1\mathbb{Z}}
\newcommand{\SplExt}{\mathbf{SplExt}}
\newcommand{\Act}{\mathbf{Act}}
\newcommand{\KG}{\mathbf{KGpd}}
\newcommand{\pb}[4]{#1\underset{\mathclap{\langle #3,#4\rangle}}{\times} #2}
\newcommand{\Ker}{\text{Ker}}
\newcommand{\conj}{\ensuremath{c}}
\newcounter{tmp}
\newcolumntype{L}[1]{>{\raggedright\let\newline\\\arraybackslash\hspace{0pt}}m{#1}}
\newtheorem{definition}{Definition}[section]
\newtheorem{proposition}[definition]{Proposition}
\newtheorem{theorem}[definition]{Theorem}
\newtheorem{lemma}[definition]{Lemma}
\newtheorem{condition}[definition]{Condition}
\newtheorem{example}[definition]{Example}
\newtheorem{corollary}[definition]{Corollary}
\newtheorem{remark}[definition]{Remark}
\newcommand{\C}{\mathbb{C}}
\newcommand{\two}{\mathbf{2}}
\newcommand{\Out}{\textnormal{Out}}
\newcommand{\Aut}{\textnormal{Aut}}
\newcommand{\Der}{\textnormal{Der}}
\newcommand{\inj}{complete}
\newcommand{\binj}{complete${}^*$}
\newcommand{\pinj}{proto-complete}
\newcommand{\scmp}{strong-complete}
\newcommand{\injness}{completeness}
\newcommand{\Injness}{Completeness}
\newcommand{\pinjness}{proto-completeness}
\newcommand{\binjness}{completeness${}^*$}
\newcommand{\scmpness}{strong-completeness}
\newcommand{\Rng}{\mathbf{Rng}}
\newcommand{\z}[1]{\mathbb{Z}/#1\mathbb{Z}}
\newcommand{\SplExt}{\mathbf{SplExt}}
\newcommand{\Act}{\mathbf{Act}}
\newcommand{\KG}{\mathbf{KGpd}}
\newcommand{\pb}[4]{#1\underset{\mathclap{\langle #3,#4\rangle}}{\times} #2}
\newcommand{\Ker}{\text{Ker}}
\newcommand{\conj}{\ensuremath{c}}
\newcounter{tmp}
\begin{document}
\title{Complete objects in categories}
\author{James Richard Andrew Gray}
\maketitle
\begin{abstract}
 We introduce the notions of \pinj{}, \inj{}, \binj{} and \scmp{} objects
in pointed categories.
 We show under mild conditions on a pointed exact protomodular category that every \pinj{}
(respectively \inj{}) object is the
product of an abelian \pinj{} (respectively \inj{}) object and a \scmp{} object.
This together with the observation that the trivial group is
the only abelian \inj{} group recovers a theorem of Baer classifying complete
groups. In addition we generalize
several theorems about groups (subgroups) with trivial center (respectively, centralizer), and
provide a categorical explanation behind why the derivation algebra of a perfect Lie
algebra with trivial center
and the automorphism group of a non-abelian (characteristically) simple group
are \scmp.
\end{abstract}
\section{Introduction}
Recall that Carmichael \cite{CARMICHAEL:1956} called a group $G$ complete if it has trivial
center and each automorphism is \emph{inner}. For each group $G$ there is a
canonical homomorphism $\conj_G$ from $G$ to $\Aut(G)$, the automorphism group of
$G$. This homomorphism assigns to each $g$ in $G$ the inner automorphism
which sends each $x$ in $G$ to $gxg^{-1}$. It can be readily seen that a group $G$ is
complete if and only if $\conj_G$ is an isomorphism. Baer
\cite{BAER:1946}
showed that
a group $G$ is complete if and only if every normal monomorphism with
domain $G$ is a split monomorphism. We call an object in a pointed
category \inj{} if it satisfies this latter condition. \Injness{}, which corresponds
to being injective in abelian categories, has been studied in other contexts
(although not always under that name) and as explained
by B.~J.~Gardner in
\cite{GARDNER:1981}
the following  is known. Completeness corresponds to:
\begin{enumerate} 
\item having trivial center (annihilator) and each
derivation being inner in each category of Lie algebras;
\item having multiplicative identity 
in each category of associative algebras;
  \setcounter{tmp}{\arabic{enumi}}
\end{enumerate}
and follows from:
\begin{enumerate}
  \setcounter{enumi}{\arabic{tmp}}
\item the existence of a multiplicative identity 
in each category of alternative algebras and each category of
autodistributive algebras.
\end{enumerate}
Note that complete objects need not be injective nor absolute retracts, in
fact Baer showed in \cite{BAER:1946} that there are no non-trivial absolute
retracts of groups.

One of the main purposes of this paper is to introduce and study four (three main) 
notions of completeness, and to give a categorical explanation of Baer's
result mentioned above. 
In addition to \injness{} which we have already defined above we call an object $X$ \emph{\pinj{}} if
every normal monomorphism with
domain $X$ which is the kernel of a split epimorphism is a split
monomorphism, and
\emph{\scmp{}} if it satisfies the same condition except with the additional
requirement
that the normal monomorphisms in question are required to have a unique
splitting. The forth
notion is obtain by replacing normal by \emph{Bourn-normal} in the definition
of \injness{}.
Let us immediately mention that pointed protomodular categories
\cite{BOURN:1991}
in which every
object is \scmp{} turn out to be precisely what
F. Borceux and D. Bourn in \cite{BORCEUX_BOURN:2007} called coarsely
action representable.
In the pointed protomodular context we show
that \scmpness{} implies \injness{} (Proposition
\ref{proposition:implications_between_completeness}) and that every \pinj{}
(respectively \inj{}) object, satisfying certain additional conditions which
automatically hold in every such variety of universal algebras, is the product of
an abelian
\pinj{} (respectively \inj{})
object and a \scmp{} object (Theorem \ref{theorem:proto-complete_product_decomposition}).
We show that a partial converse to the previous fact holds (Proposition
\ref{proposition:product_ab_binj_scmp}). 
We give classifications of \pinjness{} and
\scmpness{} (see Theorems \ref{theorem:char_of_proto-complete} and
\ref{theorem:char_of_scmp_in_action_rep_cat}) relating to the existence of
generic split extensions in the sense of \cite{BORCEUX_JANELIDZE_KELLY:2005a},
which are closely related to Problem 6 of the open problems of
\cite{BORCEUX_JANELIDZE_KELLY:2005b}. For a group $G$ these theorems
imply: (a) $G$ is \pinj{} if and only if $\conj_G$ is a split epimorphism;
(b) $G$ is \scmp{} (= \inj{}) if and only if $\conj_G$ is an isomorphism.

Other aims include a brief study of objects with trivial center and of subobjects
with trivial centralizer, in Section \ref{section:trivial_center_and_centralizer},
and to study characteristic
monomorphism and their interaction with completeness in Section
\ref{section:characteristic monos}.
The main results of Section
\ref{section:trivial_center_and_centralizer},
applied to the category of groups, recover the following known facts about groups:
\begin{enumerate}[(i)]
\item If $G$ has trivial center, then $\conj_G : G\to \Aut(G)$ is 
a normal monomorphism with trivial centralizer 
(Proposition
\ref{proposition:X_centerless_c_X_centralizer_free});
\item If $n:N\to G$ is a normal monomorphism with trivial centralizer,
then each automorphism of $N$ admits at most one extension to $G$
(Proposition
\ref{proposition:m_trivial_centralizer=>q_1_mono}).
\end{enumerate}
In Section
\ref{section:characteristic monos}
we provide a common categorical explanation behind why the derivation
algebra of a perfect Lie algebra with trivial center, and the automorphism
group of a (characteristically) simple group are (strong) complete. This
explanation depends on
several facts including: (a) two new characterizations of characteristic
monomorphisms with domain satisfying certain conditions (see Theorems
\ref{theorem:char_of_char}
and
\ref{theorem:char_of_centerless_char});
(b) Theorem
\ref{theorem:one_step},
which generalizes the following fact for a group $G$: the homomorphism 
$\conj_G: G\to \Aut(G)$ is a characteristic monomorphism if and only if $G$
has trivial center and $\Aut(G)$ is (strong) complete.
\section{Preliminaries}
\label{section: preliminaries}
In this section we recall preliminary definitions and introduce some notation.

Let $\C$ be a pointed category with finite limits. We will write $0$ for both
the zero object and for a zero morphism between objects.
For objects $A$ and $B$
we will write $A\times B$ for the product of $A$ and $B$, and write 
$\pi_1$ and $\pi_2$ for the first and second product projections,
respectively. For a pair of morphisms $f:W\to A$ and $g:W\to B$ we will write
$\langle f,g\rangle : W \to A\times B$ for the unique morphism with
$\pi_1 \langle f,g\rangle = f$ and $\pi_2 \langle f,g\rangle = g$. For objects 
$A$ and $B$ we write
$A+B$ for the coproduct (when it exists), and write $\iota_1$ and $\iota_2$ for the first and
second coproduct inclusion. For a pair of morphisms $u:A\to Z$ an $v:B\to Z$
we write $[u,v]:A+B\to Z$ for the unique morphism with $[u,v]\iota_1=u$ and
$[u,v]\iota_2=v$. 

The category $\C$ is called 
unital \cite{BORCEUX_BOURN:2004} if for each pair of objects $A$ and $B$ the morphisms $\langle
1,0\rangle: A \to A\times B$ and $\langle 0,1\rangle : B\to A\times B$ are
jointly strongly epimorphic.  When these morphism are only jointly epimorphic
$\C$ is called weakly unital \cite{MARTINS-FERREIRA:2008a}. A pair of
morphisms $f:A\to X$ and $g: B\to X$ in a (weakly) unital category are said to
(Huq)-commute if there exists a (unique) morphism $\varphi : A\times B\to X$ making
the diagram
\[ \xymatrix{
A
\ar[r]^-{\langle 1,0\rangle}\ar@/_3ex/[dr]_{f} & A\times B \ar[d]^{\varphi} &
B\ar[l]_-{\langle 0,1\rangle}\ar@/^3ex/[dl]^{g}\\ & X &
}
\]
commute.  
Let us recall some well-known facts and definitions related to the commutes
relation (see e.g. \cite{BORCEUX_BOURN:2004}, and the references there for
the unital context, and \cite{GRAY:2010a,GRAY:2012b} for the weakly unital context.)
\begin{lemma}
\label{lemma:standard_properties_of_commutes_relation}
Let $\C$ be a weakly unital category and let $e:S\to A$, $f: A\to X$, $g: B\to
X$, $f':A'\to X'$, $g':B'\to X'$ and $h: X\to Y$ be morphisms in $\C$, then
\begin{enumerate}[(i)] \item $f$ and $g$ commute if and only if $g$ and $f$
commute; \item if $f$ and $g$ commute, then $fe$ and $g$ commute; \item if $f$
and $g$ commute, then $hf$ and $hg$ commute; \item $f\times f'$ and $g\times g'$
commute if and only if $f$ and $g$, and $f'$ and $g'$ commute.  
\end{enumerate}
Moreover, the converse of (ii) holds when $e$ is a pullback stable regular
epimorphism, while the converse of (iii) holds when $\C$ is unital and $h$ is a
monomorphism.  
\end{lemma}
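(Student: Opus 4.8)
The plan is to treat the four numbered items and the two converses separately, exploiting throughout that a witness for the commutation of a pair $(p,q)$ with common codomain is a single morphism $\varphi$ out of the relevant product satisfying $\varphi\langle 1,0\rangle=p$ and $\varphi\langle 0,1\rangle=q$, and that weak unitality makes such a $\varphi$ unique when it exists. Thus to verify a commutation I only need to \emph{produce} such a morphism, and to verify an equality of two maps out of a product it suffices to precompose with $\langle 1,0\rangle$ and $\langle 0,1\rangle$.

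For (i)--(iii) and the $\Leftarrow$ direction of (iv) I would build the required witness by transporting the given one. For (i) I compose the witness $\varphi$ of $(f,g)$ with the symmetry isomorphism $B\times A\to A\times B$; for (ii) I precompose $\varphi$ with $e\times 1_B$; for (iii) I postcompose $\varphi$ with $h$; and for (iv)$\Leftarrow$ I take $\varphi\times\varphi'$ precomposed with the middle-four interchange isomorphism $(A\times A')\times(B\times B')\to(A\times B)\times(A'\times B')$. In each case the two defining equations reduce to routine identities such as $(e\times 1_B)\langle 1,0\rangle=\langle 1,0\rangle e$ and $(e\times 1_B)\langle 0,1\rangle=\langle 0,1\rangle$, which are immediate from the universal property of the product. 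The $\Rightarrow$ direction of (iv) I would deduce from the already-proved items rather than by hand: postcomposing with $\pi_1\colon X\times X'\to X$ (by (iii)) shows $f\pi_1$ and $g\pi_1$ commute, and precomposing with the inclusions $\langle 1,0\rangle\colon A\to A\times A'$ and $\langle 1,0\rangle\colon B\to B\times B'$ (by (ii) and the symmetry (i)) trims these to $f$ and $g$; the primed pair is handled symmetrically via $\pi_2$.

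The two converses are where the extra hypotheses do real work, and I expect these to be the main obstacle. For the converse of (ii), given a witness $\psi\colon S\times B\to X$ of $(fe,g)$ I want to descend it along $e\times 1_B$ to the desired $\varphi\colon A\times B\to X$. The key observation is that $e\times 1_B$ is the pullback of $e$ along $\pi_1\colon A\times B\to A$, hence a regular epimorphism by pullback-stability, and that its kernel pair is $(r_1\times 1_B,\,r_2\times 1_B)$, where $(r_1,r_2)$ is the kernel pair of $e$, since the functor $-\times B$ preserves these limits. It then suffices to check that $\psi$ coequalizes this kernel pair, which I verify by precomposing with $\langle 1,0\rangle$ and $\langle 0,1\rangle$: on the first inclusion the two composites are $fer_1$ and $fer_2$, equal because $er_1=er_2$, and on the second both equal $g$. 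The coequalizer property yields $\varphi$ with $\varphi(e\times 1_B)=\psi$, and the two defining equations for $\varphi$ follow after cancelling the (regular, hence epic) $e$.

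For the converse of (iii), given a witness $\chi\colon A\times B\to Y$ of $(hf,hg)$ with $h$ monic and $\C$ unital, I would factor $\chi$ through $h$ by a pullback-and-diagonalization argument. Forming the pullback of $h$ along $\chi$, the projection $q$ onto $A\times B$ is a monomorphism, and the relations $hf=\chi\langle 1,0\rangle$, $hg=\chi\langle 0,1\rangle$ let me lift $\langle 1,0\rangle$ and $\langle 0,1\rangle$ through $q$. Since these two maps are \emph{jointly strongly} epimorphic in the unital setting and both factor through the mono $q$, the map $q$ must be an isomorphism; composing the other pullback projection with $q^{-1}$ produces $\varphi$, and a short check gives $h\varphi=\chi$, $\varphi\langle 1,0\rangle=f$ and $\varphi\langle 0,1\rangle=g$. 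The delicate points throughout are the bookkeeping of which product a given $\langle 1,0\rangle$ lives over, and ensuring the hypotheses (pullback-stable regular epi; unital plus mono) are invoked exactly where joint epimorphicity must be upgraded to a genuine factorization.
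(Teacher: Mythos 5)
Your proof is correct. The paper itself gives no proof of this lemma (it recalls it as well known, citing Borceux--Bourn for the unital case and Gray's work for the weakly unital case), and your arguments --- transporting cooperators for (i)--(iv), descent of the cooperator along the pullback-stable regular epimorphism $e\times 1_B$ (coequalizer of its kernel pair $(r_1\times 1_B, r_2\times 1_B)$) for the converse of (ii), and factorization of $\langle 1,0\rangle,\langle 0,1\rangle$ through the monomorphic pullback projection of $h$ for the converse of (iii) --- are exactly the standard ones found in those references, with the extra hypotheses invoked precisely where needed.
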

 
\begin{definition} Let $\C$ be a weakly unital
category. The centralizer of a morphism $f:A\to B$ is the terminal object in the
category of morphisms commuting with $f$.  
\end{definition}
 We will write
$z_{f}: Z_X(A,f)\to X$ for the centralizer of $f$ when it exits. Note that $z_f$
is always a monomorphism. When $f=1_X$ the centralizer of $f$
is called the center of $X$ and will be denoted $z_X:Z(X)\to X$. 

A split extension in $\C$ is a diagram
\begin{equation}
\label{diag:split_ext}
\vcenter{
\xymatrix{
X \ar[r]^{\kappa} & A\ar@<0.5ex>[r]^{\alpha} & B\ar@<0.5ex>[l]^{\beta}
}
}
\end{equation}
in $\C$
where $\kappa$ is the kernel of $\alpha$ and $\alpha\beta=1_B$. A morphism of
split extensions in $\C$ is a diagram
\begin{equation}
\label{diag:mor_split_ext}
\vcenter{
\xymatrix{
X \ar[r]^{\kappa}\ar[d]_{u} & A\ar[d]^{v}\ar@<0.5ex>[r]^{\alpha} & 
B\ar[d]^{w} \ar@<0.5ex>[l]^{\beta}\\
X' \ar[r]^{\kappa'} & A'\ar@<0.5ex>[r]^{\alpha'} & B'\ar@<0.5ex>[l]^{\beta'}
}
}
\end{equation}
in $\C$
where the top and bottom rows are split extensions (the domain and codomain,
 respectively), such that $\kappa' u=v\kappa$, $\beta' w= v\beta$ and
$\alpha' v = w \alpha$. Let us denote by $\SplExt(\C)$ the category of split
extensions, and by $K$ the functor sending
\eqref{diag:split_ext}
and
\eqref{diag:mor_split_ext}
to $X$ and $u$, respectively.
Let us write $\KG(\C)$ for the category with objects 8-tuples
$(X,G_0,G_1,d,c,e,m,k)$ consisting of objects and morphisms such that the
diagram on the left
\begin{equation}
\label{diagram:groupoid_with_underlying_split_extensions}
\vcenter{
\xymatrix{
 \pb{G_1}{G_1}{d}{c} \ar[r]^-{m} & G_1 \ar@<0.75ex>[r]^{d} 
\ar@<-0.75ex>[r]_{c} & G_0 \ar[l]|{e} &
X \ar[r]^{k} & G_1 \ar@<0.5ex>[r]^{d} & G_0 \ar@<0.5ex>[l]^{e}
}
}
\end{equation}
is a groupoid and the diagram on the right is a split extension. A morphism 
$(X,G_0,G_1,d,c,e,m,k) \to (X',G'_0,G'_1,d',c',e',m',k')$ is a triple 
$(u,v,w)$ where 
$u:X\to X'$, $v:G_1\to G'_1$ and $w: G_0\to G'_0$ are morphisms in $\C$ such
that the diagram on the left is a functor
\[
\xymatrix@R=3ex{
 \pb{G_1}{G_1}{d}{c} \ar[r]^-{m}\ar[d]^{v\times v} & G_1\ar[d]^{v} 
\ar@<0.75ex>[r]^{d} \ar@<-0.75ex>[r]_{c} & G_0\ar[d]^{w} \ar[l]|{e} &\\
 \pb{G'_1}{G'_1}{d'}{c'}  \ar[r]^-{m'} & G'_1 \ar@<0.75ex>[r]^{d'} 
\ar@<-0.75ex>[r]_{c'} & G'_0 \ar[l]|{e'} &
}
\xymatrix{
X \ar[r]^{k}\ar[d]^{u} & G_1\ar[d]^{v} \ar@<0.5ex>[r]^{d} & G_0\ar[d]^{w} 
\ar@<0.5ex>[l]^{e}\\
X \ar[r]^{k} & G_1 \ar@<0.5ex>[r]^{d} & G_0 \ar@<0.5ex>[l]^{e}
}
\]
and the diagram on the right is a morphism of split extensions. Note that the 
forgetful functor $U:\KG(\C) \to \SplExt(\C)$ is monadic since it is essentially
the same as the forgetful functor from the category of internal groupoids in $\C$
to the category of split epimorphisms in $\C$ shown to be monadic by D.{}
 Bourn in \cite{BOURN:1987} (see also the discussion above Theorem 4.2 of
\cite{GRAY:2017}).

A pointed category $\C$ can be equivalently defined to be
(Bourn)-protomodular \cite{BOURN:1991}
if the split short five lemma holds, that is, for each morphism of split
extensions \eqref{diag:mor_split_ext}
if $u$ and $w$ are isomorphisms, then $v$ is an isomorphism.
A category $\C$ is semi-abelian in the sense of
G.{} Janelidze, L.{} Marki and W.{} Tholen
\cite{JANELIDZE_MARKI_THOLEN:2002}
if it is pointed, (Barr)-exact
\cite{BARR:1971},
protomodular and has binary coproducts. 
Following, F.{} Borceux, G.{} Janelidze, G.{} M.{} Kelly,
in
\cite{BORCEUX_JANELIDZE_KELLY:2005a}
we define a generic
split extension with kernel $X$ to be a terminal object in a fiber
$K^{-1}(X)$ of $K : \SplExt(\C)\to \C$. We denote such a generic split
extension as follows:
\begin{equation}
\label{diagram:generic_split_extension_with_kernel_X}
\vcenter{
\xymatrix{
X \ar[r]^-{k} & [X]\ltimes X\ar@<0.5ex>[r]^-{p_1} & [X].\ar@<0.5ex>[l]^-{i}
}
}
\end{equation}
A semi-abelian category is called action representable if each object
admints a generic split extension with kernel $X$.
Examples of action representable categories such include the category
of groups where $[X]=\Aut(X)$
is the automorphism group of $X$,
and the category of Lie algebras over a commutative ring $R$ where
$[X]=\Der(X)$ is the Lie algebra of derivations of $X$
(see \cite{BORCEUX_JANELIDZE_KELLY:2005a}). Other examples can be found
in \cite{BORCEUX_JANELIDZE_KELLY:2005b}, \cite{BORCEUX_BOURN_JOHNSTONE:2006},
\cite{BORCEUX_CLEMENTINO_MONTOLI:2014}, \cite{GRAY:2010}, \cite{GRAY:2013b}.
For a pointed
protomodular category $\C$ and for an object $X$ in $\C$ such that the
generic split extension with kernel $X$ exists, the object $[X]$ is called
the split extension classifier for $X$ and has the following universal property:
For each split extension \eqref{diag:split_ext} there exist a unique
morphism $v: B\to X$ such that there exist $u: A\to [X]\ltimes X$ making
the diagram a morphism of split extensions
\begin{equation}
\label{diagram:morphims_of_generic_split_extension_with_kernel_X}
\vcenter{
\xymatrix{
 X \ar[r]^{\kappa}\ar@{=}[d] & A\ar@<0.5ex>[r]^{\alpha}\ar[d]^{u} & B\ar@<0.5ex>[l]^{\beta}\ar[d]^{v}\\
X \ar[r]^-{k} & [X]\ltimes X\ar@<0.5ex>[r]^-{p_1} & [X]\ar@<0.5ex>[l]^-{i}
}
}
\end{equation}
(see the last theorem of Section 6 of \cite{BORCEUX_JANELIDZE_KELLY:2005a}.)

D.{} Bourn and G. Janelidze call a split
extension with kernel $X$ faithful
\cite{BOURN_JANELIDZE:2009}
when
there is at most one morphism to it from any split extension in $K^{-1}(X)$.
A semi-abelian category is called action accessible
\cite{BOURN_JANELIDZE:2009} if for each $X$ in $\C$ there is a morphism
from each split extension to a faithful one in $K^{-1}(X)$. Examples of
action accessible categories include the category of not-nessesarily unital
rings, associative algebras and more generally categories of interest in
the sense of G. Orzech \cite{ORZECH:1972} (see \cite{BOURN_JANELIDZE:2009}
and \cite{MONTOLI:2010}.)

Throughout the paper we denote by $\two$ the category with objects $0$ and $1$,
and with one non-identity morphism $0\to 1$. We will identify the functor category
$\C^\two$ with the category
of morphism of $\C$, and its objects will be written as triples $(X,Z,f)$ 
where $X$ and $Z$ are objects and $f:X\to Z$ is a morphism in $\C$. A
morphism $(X,Z,f)\to (X',Z',f')$ with be written as a pair $(u,v)$ where
$u : X\to X'$ and $v:Z'\to Z'$ are morphisms in $\C$ with $fu=vf'$.

Recalling from \cite{GRAY:2013b}, in the \emph{semi-abelian context}, or from
\cite{GRAY:2017} for the general pointed context, that each generic split
extension in
$\mathbb{C}^\two$ has codomain a generic split extension in $\C$,
we will denote a generic split extensions of a morphism $f:X\to Z$
in $\mathbb{C}^\two$ as follows:
\begin{equation}\label{generic_split_ext_in_mor_cat}
\vcenter{
\xymatrix{
X\ar[d]^{f} \ar[r]^-{k} & 
[X,Z,f]\ltimes X\ar@<0.5ex>[r]^-{p_1}\ar[d]^{q_2\ltimes f} & 
[X,Z,f]\ar@<0.5ex>[l]^-{i}\ar[d]^{q_2}\\
Z \ar[r]^-{k} & [Z]\ltimes Z\ar@<0.5ex>[r]^-{p_1} & [Z].\ar@<0.5ex>[l]^-{i}
}
}
\end{equation}

Note that according to the universal property of generic split extensions 
there is also a unique morphism
\[
\xymatrix{
X\ar@{=}[d] \ar[r]^-{k} & 
[X,Z,f]\ltimes X\ar@<0.5ex>[r]^-{p_1}\ar[d]^{q_1\ltimes 1} & 
[X,Z,f]\ar@<0.5ex>[l]^-{i}\ar[d]^{q_1}\\
X \ar[r]^-{k} & [X]\ltimes X\ar@<0.5ex>[r]^-{p_1} & [X].\ar@<0.5ex>[l]^-{i}
}
\]

When $X$ admits a generic split extension, writing
$(R,r_1,r_2)$ for the kernel pair of $p_1$, it turns out that the unique morphism
\[
\xymatrix{ X \ar[r]^-{\langle 0,k\rangle} \ar@{=}[d] & R
\ar@<0.5ex>[r]^-{r_1}\ar[d]^{s}  & [X]\ltimes X \ar@<0.5ex>[l]^{\langle 1,1\rangle}\ar[d]^{p_2}\\ X
\ar[r]_-{k} & [X]\ltimes X\ar@<0.5ex>[r]^-{p_1}& [X]\ar@<0.5ex>[l]^-{i}
}
\]
is an algebra structure for the monad induced by the adjunction
where $U:\KG(\C)\to \SplExt(\C)$ is a right adjoint, and
hence determines
a groupoid structure on \eqref{diagram:generic_split_extension_with_kernel_X}.
Putting all the structure together we obtain an object
$(X,[X]\ltimes X,[X],p_1,p_2,i,m,k)$ in $\KG(\C)$ which turns out
to be terminal in the category of $X$-groupoids, that is, the fiber 
$(KU)^{-1}(X)$
(see Proposition 5.1 of \cite{BORCEUX_BOURN:2007} where this is proved
directly in the pointed protomodular context, or combine Proposition 2.26
and Theorem 4.1 via the remarks before Theorem 4.2 of \cite{GRAY:2017},
for the pointed finitely complete context).
Writing $\nabla(X)$ for the object in $\KG(\C)$ with underlying groupoid the
indiscrete groupoid on $X$ and with $\langle 0,1\rangle: X\to X\times X$ as
kernel of its domain morphism 
$\pi_1:X\times X\to X$, we call the morphism 
$\conj_X: X\to [X]$
which forms part of the unique morphism of $X$-groupoids
$\nabla(X)\to (X,[X]\ltimes X,[X],p_1,p_2,i,m,k)$
the conjugation
morphism of $X$ (see Theorem 3.1 of \cite{BORCEUX_BOURN:2007}). In particular
this means it forms part of the unique
morphism
\begin{equation}
\label{diagram:conjugation_morphism}
\vcenter{
\xymatrix{ 
X\ar@{=}[d] \ar[r]^-{\langle 0,1\rangle} & X\times
X\ar[d]^{\varphi} \ar@<0.5ex>[r]^-{\pi_1} & X\ar@<0.5ex>[l]^-{\langle
1,1\rangle}\ar[d]^{\conj_X}\\ X \ar[r]^-{k} & [X]\ltimes X \ar@<0.5ex>[r]^-{p_1} &
[X],\ar@<0.5ex>[l]^-{i} }
}
\end{equation}
and that $p_2k=c_X$. The kernel of $\conj_X$ turns out to be $z_X: Z(X)\to X$
the center of $X$ (see Proposition 5.2 of \cite{BOURN_JANELIDZE:2009} and 
Proposition 4.1 of \cite{BORCEUX_BOURN:2007} which is closely related).

For a semi-abelian category $\C$ let
us briefly recall
the equivalence between $\SplExt(\C)$ and $\Act(\C)$ the category of
\emph{internal object actions} from
\cite{BORCEUX_JANELIDZE_KELLY:2005a}.
The objects of $\Act(\C)$ are triples $(B,X,\zeta)$ where
$B$ and $X$ and are objects in $\C$ and $\zeta : B\flat X\to X$ is a morphism making $(X,\zeta)$
an algebra over a certain monad whose
object functor sends $X$ to $B\flat X$ where $\kappa_{B,X}:B\flat X\to B+X$ is the kernel of $[1,0]:B+X\to B$.
Given morphisms $g:B\to B'$ and $f:X\to X'$ the unique morphism $g\flat f : B\flat X \to B'\flat X'$
making the diagram
\[
\xymatrix{
B\flat X\ar[d]_{g\flat f}\ar[r]^{\kappa_{B,X}} & B+X\ar[d]^{g+f} \ar@<0.5ex>[r]^-{[1,0]} &
B\ar[d]^{g}\ar@<0.5ex>[l]^-{\iota_1}\\
B'\flat X' ]\ar[r]_-{\kappa_{B',X'}} & B'+X'\ar@<0.5ex>[r]^-{[1,0]} & B'\ar@<0.5ex>[l]^-{\iota_1}
}
\]
a morphism of split extensions, makes $-\flat * : \C\times \C\to \C$ a functor. A morphism 
$(B,X,\zeta) \to (B',X',\zeta')$ in $\Act(\C)$ is a pair $(g,f)$, where $g:B\to B'$ and $f:X\to X'$
are morphisms in $\C$ such that $f \zeta = \zeta' (g\flat f)$.

Given a split extension \eqref{diag:split_ext} the unique morphism $\zeta : B\flat X\to X$
making the diagram
\[
\xymatrix{
B\flat X \ar[r]^-{\kappa_{B,X}}\ar@{-->}[d]_{\zeta} & B+X \ar@<0.5ex>[r]^-{[1,0]}\ar[d]^{[\beta,\kappa]}
& B\ar@<0.5ex>[l]^-{\iota_1}\ar@{=}[d]\\
X\ar[r]_{\kappa} & A \ar@<0.5ex>[r]^{\alpha} & B \ar@<0.5ex>[l]^{\beta}
}
\]
a morphism of split extensions, makes $(B,X,\zeta)$ an object in $\Act(\C)$ and this
assignment determines the object map of a functor $\SplExt(\C) \to \Act(\C)$ which
is an equivalence of categories. Using this equivalence of categories G.{} Janelidze
produced an equivalence of categories between internal categories in $\C$
and internal crossed modules in $\C$ \cite{JANELIDZE:2003}. Let us write
$\gamma_X : X\flat X\to X$ for the action corresponding to the domain of morphism of
split extensions \eqref{diagram:conjugation_morphism}. When \emph{star multiplicative}
graphs coincide with \emph{multiplicative} graphs (which N.{} Martins-Ferreira and T.{} 
Van der Linden showed, in
\cite{MARTINS-FERREIRA_VAN_DER_LINDEN:2012},
 happens exactly when the Huq \cite{HUQ:1968} and
Smith-Pedicchio
\cite{PEDICCHIO:1995}
commutators coincide) an internal crossed module can be equivalently
defined as a triple $(B,X,\zeta,f)$ where $(B,X,\zeta)$ is an object in $\Act(\C)$ and
$f:X\to B$ is a morphism in $\C$ such that $(f,1_X) : (X,X,\gamma_X) \to (B,X,\zeta)$
and $(1_B,f) : (B,X,\zeta) \to (B,B,\gamma_B)$ are morphisms in $\Act(\C)$ (see 
\cite{JANELIDZE:2003} noting that $\gamma_X = [1,1]\kappa_{X,X}$ and that
$[1,f]\kappa_{B,X}=\gamma_B(1_B\flat f)$).
When $\C$ is a semi-abelian category and $\tau$ is the action corresponding to
split extension \eqref{diagram:generic_split_extension_with_kernel_X} then,
via the equivalence between internal categories and internal crossed modules, it follows
that the quadruple $([X],X,\tau,\conj_X)$ is the
terminal object in the category internal crossed modules with domain
of the underlying morphism the object $X$.

A monomorphism $m:S\to X$ in a finitely complete category $\C$ is
Bourn-normal \cite{BOURN:2000a}  to an equivalence relation
$r_1,r_2:R\to X$ if there exists a morphism $\tilde m:S\times S \to R$
such that (either and hence both of) the squares of the diagram on left,
or equivalently the diagram on the right
\[
\xymatrix{
S\ar[d]_{m}&S\times S\ar[d]^{\tilde m}\ar[r]^-{\pi_2}\ar[l]_-{\pi_1}&S\ar[d]^{m}\\
X&R\ar[r]_{r_2}\ar[l]^{r_1}&X
}
\xymatrix{
S\times S \ar[r]^{\tilde m}\ar[d]_{1\times m} & R
\ar[d]^{\langle r_1,r_2\rangle}\\
S\times X \ar[r]_{m\times 1} & X\times X
}
\]
are pullbacks.
When $\C$ is pointed, such a morphism $\tilde m$
exists as soon as there is a morphism $k:S\to R$ making the lower left hand
square in the diagram
\begin{equation}
\label{diagram:bourn_normal}
\vcenter{
\xymatrix@!@C=0ex@R=0ex{ & S\ar[rr]^{\langle
0,1\rangle}\ar@{=}[dl]\ar@/^3ex/[ddl]^(0.3){m} && S\times S \ar[dl]_{\tilde
m} \ar@<0.5ex>[rr]^{\pi_1}\ar@/^3ex/[ddl]^(0.2){m\times m} && S
\ar[dl]_{m}\ar@<0.5ex>[ll]^{\langle 1,1\rangle}\ar@/^3ex/[ddl]^{m}\\ S
\ar[rr]^(0.7){k}\ar[d]_{m} && R \ar[d]_{\langle r_1,r_2\rangle}
\ar@<0.5ex>[rr]^(0.7){r_1} && X\ar@{=}[d] \ar@<0.5ex>[ll]^(0.3){s}\\ X
\ar[rr]^{\langle 0,1\rangle} && X\times X \ar@<0.5ex>[rr]^{\pi_1} && X
\ar@<0.5ex>[ll]^{\langle 1,1\rangle} } 
}
\end{equation}
a pullback. Note that, writing $s$ for the unique morphism such that
$r_1s=1_X=r_2s$, this means that the entire diagram consists of morphisms of
split extensions. Note also that this means that for an equivalence relation
$(R,r_1,r_2)$ if $k$ is the kernel of $r_1$, then the composite $r_2k$ is 
Bourn-normal to $(R,r_1,r_2)$. We call a morphism $m$ a Bourn-normal
monomorphism as soon as there exists an equivalence relation to which it is
Bourn-normal.
\section{Objects with trivial centers and morphisms with trivial centralizers}
\label{section:trivial_center_and_centralizer}
We will say that:
\begin{enumerate}[(i)] \item An object $X$ in a weakly unital category has
trivial center if $Z(X)=0$; \item A morphism $f:A\to X$ in a weakly unital
category has trivial centralizer if $Z_X(A,f)=0$.  
\end{enumerate}
 In this
section we study objects and morphisms with these properties.
The following lemma is closely related to Proposition 3.18 of 
\cite{BOURN:2001}.
\begin{lemma}
\label{lemma:char_of_centerless}
Let $\C$ be a (weakly) unital category. An object $X$ in $\C$ has trivial center
if and only if for each object $Y$ the morphism
$\langle 1,0\rangle : X \to X\times Y$ has a unique section.
\end{lemma}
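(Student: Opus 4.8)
The plan is to translate both sides of the equivalence into statements about the Huq commutation relation and then compare them. The key observation is that a \emph{section} of $\langle 1,0\rangle\colon X\to X\times Y$, meaning a morphism $s\colon X\times Y\to X$ with $s\langle 1,0\rangle=1_X$, is exactly the same data as a witness that $1_X\colon X\to X$ and $g:=s\langle 0,1\rangle\colon Y\to X$ commute: the two equations $s\langle 1,0\rangle=1_X$ and $s\langle 0,1\rangle=g$ are precisely those defining the comparison morphism $\varphi$ in the definition of commutation.

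First I would upgrade this observation to a bijection between the set of sections of $\langle 1,0\rangle$ and the set $C(Y)$ of morphisms $g\colon Y\to X$ that commute with $1_X$, via $s\mapsto s\langle 0,1\rangle$. The inverse sends $g\in C(Y)$ to its witness $\varphi_g$; this is well defined because $\langle 1,0\rangle$ and $\langle 0,1\rangle$ are jointly epimorphic in a weakly unital category, so the witness is unique. The same uniqueness shows the two assignments are mutually inverse, since any section $s$ is itself the witness for $s\langle 0,1\rangle$, forcing $\varphi_{s\langle 0,1\rangle}=s$. This is the only place the (weak) unitality hypothesis enters, and it is exactly what makes the correspondence injective rather than merely surjective. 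Noting that $\pi_1$ is always a section and corresponds to the zero morphism $\pi_1\langle 0,1\rangle=0$, which commutes with $1_X$, I conclude that, for a fixed $Y$, the morphism $\langle 1,0\rangle$ has a unique section if and only if $C(Y)=\{0\}$.

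It then remains to match the condition ``$C(Y)=\{0\}$ for every $Y$'' with ``$Z(X)=0$''. If $Z(X)=0$, then every $g\in C(Y)$ factors through the centralizer $z_X\colon Z(X)\to X$ of $1_X$, which is now the zero morphism $0\to X$, so $g=0$. Conversely, assuming $C(Y)=\{0\}$ for all $Y$, I would exhibit the zero morphism $0\to X$ as the terminal object of the category of morphisms commuting with $1_X$: by hypothesis every such morphism is zero, hence factors through $0\to X$, and uniquely so because $0$ is the zero object; thus the center exists and equals $0$. I expect this converse to be the only delicate point, the subtlety being that one should not presuppose the existence of the center and simply invoke $Y=Z(X)$, $g=z_X$ together with $z_X$ being monic, but instead construct the terminal commuting morphism directly. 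Everything else is a routine unwinding of the definitions.
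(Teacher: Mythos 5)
Your proof is correct and takes essentially the same route as the paper's: the paper's one-line argument is precisely the observation that sections $\varphi$ of $\langle 1,0\rangle : X\to X\times Y$ are the same thing as witnesses that some $f = \varphi\langle 0,1\rangle : Y\to X$ commutes with $1_X$ (unique by weak unitality), with $f=0$ corresponding to the projection, so that uniqueness of the section for all $Y$ is equivalent to triviality of the center. Your write-up merely fills in the details the paper leaves implicit --- notably the construction of $0\to X$ as the terminal commuting morphism, which settles existence of the center in the converse direction, and you correctly use $\pi_1$ where the paper has the typo $\varphi=\pi_2$.
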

\begin{proof}
The claim follows by noting that for each commutative diagram
\[
\xymatrix{
X \ar[r]^-{\langle 1,0 \rangle}\ar[dr]_{1_X}& X\times Y \ar[d]^-{\varphi} &
Y \ar[l]_-{\langle 0,1\rangle}\ar[dl]^{f}\\
& X &
}
\]
$f$ is a zero morphism if and only if $\varphi=\pi_2$.
\end{proof}

Since for any morphisms $e: S\to A$, $f: A\to X$ and $g:B\to X$ in a weakly
unital category, if $f$ and $g$ commute, then so do $fe$ and $g$, we obtain:
\begin{proposition} Let $e:S\to A$ and $f:A\to X$ be morphisms in a weakly
unital
category $\C$.  If $fe$ has trivial centralizer, then so does $f$. In
particular when $A=X$ and $f=1_X$ this means that if $e$ has trivial
centralizer, then $X$ has trivial center.  
\end{proposition}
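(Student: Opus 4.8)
The plan is to lean entirely on part (ii) of Lemma~\ref{lemma:standard_properties_of_commutes_relation}, which is exactly the fact recorded in the sentence preceding the statement: every morphism into $X$ that commutes with $f$ also commutes with $fe$. In other words, the category of morphisms commuting with $f$ is a full subcategory of the category of morphisms commuting with $fe$. Since the centralizer of $fe$ is by definition the terminal object of the latter category, and the hypothesis $Z_X(S,fe)=0$ says this terminal object is the zero morphism $0\to X$, the strategy is to show that this same zero morphism is already terminal among the morphisms commuting with $f$, which is precisely the assertion $Z_X(A,f)=0$.

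Concretely I would argue as follows. The centralizer $z_f\colon Z_X(A,f)\to X$ commutes with $f$, being the terminal such morphism; applying the symmetry clause (i) and then clause (ii) of Lemma~\ref{lemma:standard_properties_of_commutes_relation}, it follows that $z_f$ commutes with $fe$. Hence $z_f$ is an object of the category of morphisms commuting with $fe$, whose terminal object is $z_{fe}\colon 0\to X$; terminality yields a morphism from $z_f$ to $z_{fe}$, that is, a factorization of $z_f$ through the zero object, forcing $z_f=0$. Since a centralizer is always a monomorphism, a zero morphism $Z_X(A,f)\to X$ that is monic must have trivial domain (in a pointed category the identity and the zero endomorphism of $Z_X(A,f)$ agree after composing with the monic $z_f$, hence agree), so $Z_X(A,f)=0$. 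The final clause is then immediate: taking $A=X$ and $f=1_X$ gives $fe=e$, and since the centralizer of $1_X$ is by definition the center $Z(X)$, trivial centralizer of $e$ yields $Z(X)=0$.

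The content here is genuinely short, so the only thing to handle with care is a piece of bookkeeping: clause (ii) of Lemma~\ref{lemma:standard_properties_of_commutes_relation} is asymmetric, precomposing $e$ with the \emph{first} of the two commuting morphisms, so one must apply it with $z_f$ in the role of $g$ and invoke symmetry (clause (i)) to present the hypothesis in the form it requires. A second, minor point worth stating explicitly rather than gliding over is the passage from $z_f=0$ to $Z_X(A,f)=0$, which relies on pointedness together with the monomorphism property of $z_f$; equivalently one may bypass it by checking directly that the zero morphism $0\to X$ (which commutes with every morphism, in particular with $f$) is terminal among the morphisms commuting with $f$, an argument that has the side benefit of simultaneously establishing that the centralizer of $f$ exists.
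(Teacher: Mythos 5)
Correct, and essentially the paper's own proof: the paper derives this proposition solely from the remark preceding it (clause (ii) of Lemma~\ref{lemma:standard_properties_of_commutes_relation}), namely that every morphism commuting with $f$ also commutes with $fe$, hence factors through $Z_X(S,fe)=0$ and must be zero. One remark: since ``has trivial centralizer'' includes the existence of the centralizer, the direct terminality check you sketch at the end (showing $0\to X$ is itself terminal among morphisms commuting with $f$) is the version to keep, as your main argument via $z_f$ presupposes that $Z_X(A,f)$ exists.
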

\begin{proposition}
\label{proposition:center_of_products} Let $\C$ be a
weakly
unital category and let $(X,(\pi_i : X\to X_i)_{i \in I})$ be the product of a
family of objects $(X_i)_{i\in I}$ in $\C$.  
\begin{enumerate}[(i)]
\item A morphism $f:A\to X$ commutes with $1_X$ if and only if
$\pi_i f: A \to X_i$ commutes with $1_{X_i}$ 
\item If the center $z_{X_i}:Z(X_i)\to X_i$ of each
$X_i$ exists and the product of family $(Z(X_i)_{i\in I})$ exists, then
the center of $X$ exists and is the product of the family of morphisms
$(z_{X_i}:Z(X_i)\to X_i)_{i\in I}$;
\item if the center of $X$ exists, then the center of each $X_i$ exist and
 their product exists and is the center of $X$.
\end{enumerate}
\end{proposition}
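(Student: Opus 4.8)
The plan is to prove the three parts in sequence, using part (i) as the computational engine for parts (ii) and (iii). The entire proposition rests on Lemma \ref{lemma:standard_properties_of_commutes_relation}, particularly items (ii) and (iii) together with the universal property of the product. First I would establish (i). For the forward direction, suppose $f \colon A \to X$ commutes with $1_X$. Since $\pi_i \colon X \to X_i$ is a morphism and $1_{X_i} \pi_i = \pi_i 1_X$, I apply Lemma \ref{lemma:standard_properties_of_commutes_relation}(iii) with $h = \pi_i$ to the commuting pair $(f, 1_X)$, obtaining that $\pi_i f$ and $\pi_i 1_X = 1_{X_i} \pi_i$ commute; then Lemma \ref{lemma:standard_properties_of_commutes_relation}(ii) (precomposition on the second slot, via symmetry (i)) strips off the $\pi_i$ to yield that $\pi_i f$ commutes with $1_{X_i}$. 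The converse is the more delicate direction: from the hypothesis that each $\pi_i f$ commutes with $1_{X_i}$ I must produce a single morphism $\varphi \colon A \times X \to X$ witnessing that $f$ and $1_X$ commute. I would construct $\varphi$ componentwise, using the witnessing maps $\varphi_i \colon A \times X_i \to X_i$ for each index, precomposed with $1_A \times \pi_i$, and then assemble $\langle \varphi_i (1_A \times \pi_i) \rangle_{i \in I} \colon A \times X \to X$; the verification that this satisfies the two triangle conditions of the Huq-commuting diagram reduces to checking each component, which follows from the corresponding conditions for the $\varphi_i$.

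Next I would deduce (ii). The key observation is that an object $C \to X$ is the center precisely when it is the terminal object among morphisms commuting with $1_X$, and by part (i) a morphism commutes with $1_X$ if and only if it does so componentwise after projection. Concretely, I would show that the product morphism $z := \prod_i z_{X_i} \colon \prod_i Z(X_i) \to \prod_i X_i = X$ has the required universal property. That $z$ itself commutes with $1_X$ follows from part (i), since $\pi_i z = z_{X_i} \pi'_i$ (where $\pi'_i$ is the projection of the product of centers) commutes with $1_{X_i}$ by Lemma \ref{lemma:standard_properties_of_commutes_relation}(ii) applied to the centrality of $z_{X_i}$. For terminality, given any $f \colon A \to X$ commuting with $1_X$, part (i) gives that each $\pi_i f$ commutes with $1_{X_i}$, so by the universal property of the center $z_{X_i}$ there is a unique $f_i \colon A \to Z(X_i)$ with $z_{X_i} f_i = \pi_i f$; the tuple $\langle f_i \rangle_{i} \colon A \to \prod_i Z(X_i)$ is then the unique factorization through $z$, where uniqueness uses that $z$ is a monomorphism (being a product of the monomorphisms $z_{X_i}$).

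Finally, part (iii) is essentially the converse bookkeeping. Assuming the center $z_X \colon Z(X) \to X$ exists, I would show each $Z(X_i)$ exists by exhibiting it directly. The natural candidate is obtained by transporting the centrality of $z_X$ through the projection: I would argue that the factorizations of $\pi_i z_X$ through a suitable subobject realize $Z(X_i)$, then invoke (ii) to identify the product of these with $Z(X)$. The cleanest route is to note that once each $Z(X_i)$ is shown to exist and their product exists, part (ii) forces that product to be $z_X$; so the real content is establishing existence of the individual centers from the existence of the global one. I expect this existence step to be the main obstacle, since it runs against the grain of the product construction and may require producing the $Z(X_i)$ as an appropriate pullback or equalizer and checking it satisfies the terminality property among morphisms commuting with $1_{X_i}$; the identification with $z_X$ afterwards is then immediate from (ii) and the monomorphism property of the centers.
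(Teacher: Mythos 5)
Parts (i) and (ii) of your proposal are essentially the paper's own argument and are correct in substance, with one citation slip in the forward direction of (i): after applying Lemma \ref{lemma:standard_properties_of_commutes_relation}(iii) to conclude that $\pi_i f$ and $1_{X_i}\pi_i$ commute, you cannot ``strip off'' the $\pi_i$ using item (ii) of that lemma --- item (ii) \emph{adds} a precomposition, it does not remove one. What you need is the converse of (ii), which by the ``Moreover'' clause of the lemma holds precisely because $\pi_i$, being a split epimorphism, is a pullback-stable regular epimorphism; this is exactly the step the paper justifies by citing Proposition 3.14 of \cite{GRAY:2012b}. With that correction, your treatment of (i) and your deduction of (ii) from it (including the use of the monomorphism $\prod_i z_{X_i}$ for uniqueness) go through as written.

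Part (iii), however, contains a genuine gap: you explicitly defer the construction of the individual centers (``I expect this existence step to be the main obstacle'') and never carry it out, and you also tacitly assume that the product of the $Z(X_i)$ exists, which is itself part of the conclusion to be proved --- the category is not assumed to have all products, only the given one. The paper's construction is concrete and is the missing idea. Since a weakly unital category is pointed, for each $i$ there is a canonical injection $\lambda_i : X_i \to X$ with $\pi_j\lambda_i$ the identity for $j=i$ and zero otherwise; one defines $z_i : Z_i \to X_i$ as the pullback of $z_X$ along $\lambda_i$. Terminality of $z_i$ among morphisms commuting with $1_{X_i}$ then follows because, for central $f : A \to X_i$, the composite $\lambda_i f$ is central in $X$ by (i) (its components are $f$ and zero morphisms), hence factors uniquely through $z_X$, and this factorization lifts through the pullback; $z_i$ being a monomorphism gives uniqueness. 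Finally, one must show that $Z(X)$ itself, equipped with the induced morphisms $\rho_i : Z(X) \to Z_i$ (which exist because $\pi_i z_X$ commutes with $\pi_i$ and hence, again by the pullback-stable regular epimorphism argument, is central in $X_i$), is a product of the family $(Z_i)_{i\in I}$: a family $\alpha_i : A\to Z_i$ yields $\alpha : A\to X$ with $\pi_i\alpha = z_i\alpha_i$, which is central by (i) and so factors through $z_X$, and the $\rho_i$ are jointly monomorphic. Without this (or an equivalent) construction, your part (iii) is a plan rather than a proof.
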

\begin{proof} Since (ii) is a straight forward consequence of (i) we prove
(i) and (iii). To prove (i) let $f:A\to X$ be a morphism in $\C$.  Since each $\pi_i$ being a split
epimorphism is a pullback stable regular epimorphism it follows by Proposition
3.14 in \cite{GRAY:2012b} that $\pi_i f$ commutes with $\pi_i$ if and only if
$\pi_i f$ commutes with $1_{X_i}$. Therefore we need only show that $f$ 
commutes
with $1_X$ if and only if $\pi_i f$ commutes with $\pi_i$. However, this 
follows
from the fact that the existence of a morphism $\varphi : A\times X \to X$
such
that $\varphi \langle 1,0\rangle = f$ and $\varphi \langle 0,1\rangle = 1_X$ 
is
equivalent to the existence of a family of morphisms 
$(\varphi_i : A\times X \to X_i)_{i\in I}$
such that for each $i$ in $I$, 
$\varphi_i \langle 1,0\rangle = \pi_i f$
and 
 $\varphi_i \langle 0,1\rangle = \pi_i$. To prove (iii) let $z:Z\to X$ be 
 the center of $X$. For each $i$ in $I$ let $\lambda_i:X_i\to X$ be the
 unique morphism with $\pi_j\lambda_i$ identity if $i=j$ and 
 zero otherwise, and let $z_i : Z\to X$ be the morphism obtained by pullback
 as displayed in the square of the diagram
 \[
  \xymatrix{
   Z\ar@/_2ex/[ddr]_{\pi_i z}\ar@{-->}[dr]^{\rho_i} &&\\
   & Z_i \ar[r]^{\eta_i} \ar[d]_{z_i} & Z\ar[d]^{z} \\
   & X_i \ar[r]^{\lambda_i} & X.\\
  }
 \]
 We will show that $z_i$ is the center of $X_i$.
Suppose $f:A\to X_i$ a central morphism. Since as easily follows from (i)
the morphism
 $\lambda_i f : A\to X$ is central, it follows that there exists
 a unique morphism $\tilde f: A \to Z$ such that
 $z\tilde f = \lambda_i f$ and hence a unique morphism $\bar f : A\to Z_i$
 such that $z_i \bar f = f$ and $\eta_i \bar f=\tilde f$. Since
 $z_i$ is a monomorphism this is sufficient to show that it is the center of $X_i$.
 Since $z$ commutes with $1_X$ it follows that $\pi_i z$ commutes with $\pi_i$
 and hence, by Proposition 3.14 in \cite{GRAY:2012b}, that $\pi_i z$ is central.
 This means that there is a unique morphism $\rho_i:Z\to Z_i$ such that
 $z_i\rho_i = \pi_iz$.
We will show that the family $(Z,(\rho_i: Z\to Z_i)_{i \in I})$ is a product.
 Suppose
 $(\alpha_i : A \to Z_i)_{i\in I}$ is family of morphisms. By composition we obtain
 a family $(z_i \alpha_i :A\to X_i)_{i\in I}$ and hence a unique morphism $\alpha : A \to X$
 such $\pi_i \alpha = z_i \alpha_i$ for each $i$ in $I$. Since by applying (i)
 we see that $\alpha$ is central it follows that there exists a unique morphism
 $\bar \alpha : A\to Z$ such that $z \bar \alpha =\alpha$. However, since for
 each $i$ in $I$ the morphism $z_i$ is a monomorphism and
 $z_i \rho_i \bar \alpha = \pi_i z \bar \alpha = \pi_i \alpha = z_i \alpha_i$
 it follows that $\rho_i \bar \alpha = \alpha_i$.
 The proof of claim is completed by noting that the family $(\rho_i)_{i\in I}$ is jointly
 monomorphic. To see why note that $(\pi_iz)_{i \in I}$ is jointly monomorphic
 (since $z$ is a monomorphism and $(\pi_i)_{i\in I}$ is jointly monomorphic) and 
 for each $i$ in $I$, $z_i$ is a monomorphism and $z_i\rho_i=\pi_iz$.
\end{proof}
 As a corollary we obtain: 
\begin{corollary}
\label{corollary:factors_of_a_product_centerless_<=>_product_centerless}
Let $\C$ be a weakly
unital category and let $(X,(\pi_i : X\to X_i)_{i \in I})$ be the product of a
family of objects $(X_i)_{i\in I}$ in $\C$.  The object $X$ has trivial center
if and only if for each $i$ in $I$ the object $X_i$ has trivial center.
\end{corollary}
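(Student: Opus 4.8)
The plan is to derive the corollary directly from parts (ii) and (iii) of Proposition~\ref{proposition:center_of_products}, reducing the statement to the elementary observation that, in a pointed category, a product of a family of objects is a zero object if and only if every object in the family is a zero object.

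For the ``if'' direction I would assume that each $X_i$ has trivial center, so that each center $z_{X_i} : Z(X_i) \to X_i$ exists and $Z(X_i)=0$. First I would check that the zero object serves as the product of the family $(Z(X_i))_{i\in I}$: the projections are the (necessarily zero) endomorphisms of $0$, and any cone $(\alpha_i : A\to 0)_{i\in I}$ factors uniquely through the unique morphism $A\to 0$. Thus the hypotheses of Proposition~\ref{proposition:center_of_products}(ii) are met, and its conclusion identifies $z_X$ with the product of the family $(z_{X_i})_{i\in I}$, whose domain is $\prod_{i\in I} Z(X_i)=0$. Hence $Z(X)=0$, that is, $X$ has trivial center.

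For the ``only if'' direction I would assume $Z(X)=0$. Since the center of $X$ then exists, Proposition~\ref{proposition:center_of_products}(iii) guarantees that each $Z(X_i)$ exists and that their product is $z_X$, so in particular $\prod_{i\in I}Z(X_i)=0$. To extract that each factor is trivial, I would fix $j\in I$ and apply the universal property of the product to the cone $(\alpha_i : Z(X_j)\to Z(X_i))_{i\in I}$ given by $\alpha_j=1_{Z(X_j)}$ and $\alpha_i=0$ for $i\neq j$: the induced morphism $Z(X_j)\to\prod_{i\in I}Z(X_i)=0$ necessarily factors through the zero object, so composing with the $j$-th projection forces $1_{Z(X_j)}=0$. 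An object whose identity is a zero morphism is a zero object, whence $Z(X_j)=0$.

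I do not expect a genuine obstacle here: the content is entirely carried by Proposition~\ref{proposition:center_of_products}, and the only auxiliary input is the two halves of the ``product is zero if and only if each factor is zero'' fact, both immediate from the universal property of products together with the existence of a zero object. The one point requiring a word of care is the verification in the ``if'' direction that the product $\prod_{i\in I}Z(X_i)$ actually \emph{exists} (as demanded by the hypothesis of part (ii)), which is why I would exhibit the zero object explicitly as this product rather than merely assert its value.
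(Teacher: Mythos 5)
Your proof is correct and takes essentially the same route as the paper, which states this result as an immediate corollary of Proposition \ref{proposition:center_of_products} (ii) and (iii), leaving implicit exactly the ``product is zero if and only if each factor is zero'' bookkeeping that you spell out. Your explicit verification that $0$ serves as the product of the family $(Z(X_i))_{i\in I}$ (so that the existence hypothesis of part (ii) is met) is a careful touch, but it is the same argument.
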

Recall that an object in the category of $X$-groupoids is called faithful
\cite{BOURN_JANELIDZE:2009}
if it admits at most one morphism from each object in the category
of $X$-groupoids. Recall also that a faithful $X$-groupoid has underlying
split extension faithful (see Lemma 3.2 \cite{BOURN_JANELIDZE:2009}), and that
an internal reflexive graph in a protomodular (more generally Mal'tsev)
category admits at most one groupoid structure. We will therefore, in 
the protomodular (more generally Mal'tsev) context, consider groupoids as
special kinds of reflexive graphs.
\begin{lemma} Let $\C$ be a pointed protomodular category. If $X$ has trivial
center and 
\begin{equation}
\label{diag:trivial_centralizer}
\vcenter{
\xymatrix@C=7ex{ X \ar@{=}[d]\ar[r]^-{\langle 0,1\rangle} &
X\times X\ar[d]^{\varphi} \ar@<1ex>[r]^-{\pi_1}\ar@<-1ex>[r]_-{\pi_2} &
X\ar[d]^{c}\ar[l]|-{\langle 1,1\rangle}\\ X \ar[r]^{\kappa} & A
\ar@<1ex>[r]^{\alpha_1} \ar@<-1ex>[r]_{\alpha_2} & B \ar[l]|-{\beta} } 
}
\end{equation}
is a morphism of $X$-groupoids, then $(A,\alpha_1,\alpha_2)$ is an
equivalence relation and $c$ is a Bourn-normal
monomorphism normal to $(A,\alpha_1,\alpha_2)$.
When, in addition, the $X$-groupoid at the
bottom of \eqref{diag:trivial_centralizer} is faithful, then $c$ has trivial centralizer.
\end{lemma}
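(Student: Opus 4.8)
The plan is to reduce the first two assertions to the single statement that the kernel of $c$ is trivial, and to obtain the third from the faithfulness hypothesis together with this monomorphy. First I would read off from the hypothesis that \eqref{diag:trivial_centralizer} is a morphism of $X$-groupoids the identities $\varphi\langle 0,1\rangle=\kappa$, $\alpha_1\varphi=c\pi_1$ and $\alpha_2\varphi=c\pi_2$, where $\kappa$ is the kernel of $\alpha_1$; composing the last with $\langle 0,1\rangle$ gives $c=\alpha_2\kappa$. Recall that in a pointed protomodular category a morphism is a monomorphism as soon as its kernel is trivial (an effective equivalence relation is determined by its normalization, so one with trivial normal subobject is discrete). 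Hence, once I show that the kernel $n\colon N\to X$ of $c$ is $0$, it follows at once that $c$ is a monomorphism and that $\langle\alpha_1,\alpha_2\rangle\colon A\to B\times B$ is a monomorphism, since its kernel is the intersection $\ker\alpha_1\wedge\ker\alpha_2$, which along $\kappa$ is exactly $\ker c$. A reflexive relation carrying a groupoid structure whose source--target pairing is monic is an equivalence relation, giving the first claim; and then, writing $r_1=\alpha_1$, $r_2=\alpha_2$ and $k=\kappa=\ker r_1$, the equality $c=r_2k$ exhibits $c$ as Bourn-normal to $(A,\alpha_1,\alpha_2)$ by the observation following \eqref{diagram:bourn_normal}.

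The heart of the argument is therefore to prove that $N=\ker c$ is central, using that $X$ has trivial center. Here I would exploit the internal composition $m$ of the bottom groupoid. Since $cn=0$, the morphism $\kappa n\colon N\to A$ is a ``loop'' at the zero object ($\alpha_1\kappa n=0=\alpha_2\kappa n$), while $\kappa\pi_2\colon N\times X\to A$ is an arrow out of the zero object with $\alpha_1\kappa\pi_2=0$. These form a composable pair, and I would set $\Theta:=m\langle\kappa\pi_2,\kappa n\pi_1\rangle\colon N\times X\to A$. A short check gives $\alpha_1\Theta=0$, so $\Theta$ factors as $\Theta=\kappa\Theta'$ for a unique $\Theta'\colon N\times X\to X$. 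Using the groupoid unit laws (the identity arrow at the zero object being the zero arrow) one finds $\Theta'\langle 1,0\rangle=n$ and $\Theta'\langle 0,1\rangle=1_X$, so $\Theta'$ witnesses that $n$ and $1_X$ Huq-commute. Thus $n$ is a central morphism and factors through the center $z_X\colon Z(X)\to X$; as $Z(X)=0$ and $n$ is a kernel, $N=0$. This central-commutation construction is the step I expect to require the most care, chiefly in the source/target bookkeeping for $m$ and in verifying the two restrictions of $\Theta'$ from the unit laws.

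For the final assertion, assume the bottom $X$-groupoid is faithful; I must show $Z_B(X,c)=0$, i.e.\ that every morphism $g$ commuting with $c$ is zero. By Lemma~3.2 of \cite{BOURN_JANELIDZE:2009} the underlying split extension is faithful, which amounts to the canonical action of $B$ on $X$ being faithful, that is, the kernel of that action is trivial. The remaining point is that a morphism commuting with $c$ lands in this kernel: because $c=\alpha_2\kappa$ is now known to be a monomorphism, equivariance forces an element of $B$ commuting with the image of $c$ to act trivially on $X$. Combining these, any $g$ commuting with $c$ is zero, so the centralizer of $c$ is trivial. I expect this part to be the main obstacle: making precise, in the pointed protomodular setting, both the passage ``faithful $X$-groupoid $\Rightarrow$ faithful action'' and the Peiffer-type step ``$g$ commutes with $c$ and $c$ monic $\Rightarrow$ $g$ acts trivially'' --- equivalently, producing from a nonzero $g$ two distinct morphisms of $X$-groupoids into the faithful one, contradicting faithfulness.
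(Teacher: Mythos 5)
Your first two claims are proved correctly, and essentially along the paper's own lines: everything is reduced to $\Ker(c)=0$. Where the paper simply cites Lemma 2.4 and Corollary 2.6 of \cite{CIGOLI_MANTOVANI:2012} (equivalently Proposition 5.2 and Observation 5.3 of \cite{BOURN_JANELIDZE:2009}) to see that $\ker(c)$ is central, you build the cooperator directly from the groupoid multiplication: your pair $\langle\kappa\pi_2,\kappa n\pi_1\rangle$ is indeed composable (since $\alpha_1\kappa\pi_2=0=\alpha_2\kappa n\pi_1$), the composite $\Theta$ satisfies $\alpha_1\Theta=0$, and the unit laws give $\Theta'\langle 1,0\rangle=n$, $\Theta'\langle 0,1\rangle=1_X$ for the factorization $\Theta=\kappa\Theta'$; so $n$ is central, hence $0$, and the rest (trivial kernel implies monomorphism by protomodularity, $\ker\langle\alpha_1,\alpha_2\rangle\cong\ker(\alpha_2\kappa)=\Ker(c)$, and $c=\alpha_2\kappa$ with $\kappa=\ker\alpha_1$ exhibiting Bourn-normality) matches the paper's argument.

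The centralizer claim, however, is a genuine gap: you state the intended conclusion and flag the two missing steps yourself, but neither is supplied, and the element-theoretic phrasing (``an element of $B$ commuting with the image of $c$ acts trivially on $X$'') has no direct meaning in a general pointed protomodular category. The two missing steps are exactly what the paper's proof provides. First, commutation must be transferred from $B$ into $A$: for $u:S\to B$ one has $\langle\alpha_1,\alpha_2\rangle\kappa=\langle 0,c\rangle$ and $\langle\alpha_1,\alpha_2\rangle\beta u=\langle u,u\rangle$, and since $\langle\alpha_1,\alpha_2\rangle$ is a monomorphism (this is the monomorphism that does the work, not $c$ itself) and the category is unital, commutation reflects along it by the converse of part (iii) of Lemma \ref{lemma:standard_properties_of_commutes_relation}; combined with the other parts of that lemma this shows that $u$ and $c$ commute if and only if $\kappa$ and $\beta u$ commute. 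Second, the faithfulness step: if $\varphi: X\times S\to A$ is a cooperator of $\kappa$ and $\beta u$, then $(1_X,\varphi,u)$ and $(1_X,\kappa\pi_1,0)$ are both morphisms of split extensions from the product split extension with kernel $\langle 1,0\rangle: X\to X\times S$, projection $\pi_2$ and section $\langle 0,1\rangle$, into the split extension $(X,\kappa,A,\alpha_1,\beta)$; faithfulness of the latter (via Lemma 3.2 of \cite{BOURN_JANELIDZE:2009}, as you note) forces these to coincide, whence $u=0$. This is precisely the content of Proposition 2.5 of \cite{CIGOLI_MANTOVANI:2012} that the paper invokes. Without these two steps your third paragraph is a statement of intent rather than a proof.
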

 
\begin{proof} Since
 according to Lemma 2.4 of \cite{CIGOLI_MANTOVANI:2012} (see also Proposition 5.2
 of \cite{BOURN_JANELIDZE:2009}) the morphism
$\ker(c) : \Ker(c) \to X$ is a subobject of $X$ such that
$\langle 0,1\rangle$ and $\langle 1,1\rangle \ker(c)$ commute, it easily
 follows that $\ker(c)$ and $1_X$ commute (see Observation 5.3 of 
 \cite{BOURN_JANELIDZE:2009} or Corollary 2.6 of \cite{CIGOLI_MANTOVANI:2012}).
 This means that
$\ker(c)=0$ and hence, by protomodularity, $c$ is a monomorphism 
\cite{BOURN:2000}.  
Since
$c= c\pi_2\langle 0,1\rangle = \alpha_2 \varphi \langle 0,1\rangle = \alpha_2
\kappa$ it follows that the diagram 
\begin{equation}
\label{diagram:1}
\vcenter{
\xymatrix@C=7ex{ X \ar[r]^{\kappa}\ar[d]_{c} & A
\ar@<0.5ex>[r]^{\alpha_1}\ar[d]^{\langle \alpha_1,\alpha_2\rangle}  & B
\ar@<0.5ex>[l]^-{\beta}\ar@{=}[d]\\ B \ar[r]^-{\langle 0,1\rangle} & B\times B
\ar@<0.5ex>[r]^-{\pi_1} & B \ar@<0.5ex>[l]^-{\langle 1,1\rangle} } }
\end{equation}
 commutes. Therefore, since $\ker(\langle \alpha_1,\alpha_2\rangle)=\ker(c)=0$ 
 it follows by protomodularity that
$\langle \alpha_1,\alpha_2 \rangle$ is a monomorphism and $c$ is Bourn-normal
to the equivalence relation $(A,\alpha_1,\alpha_2)$. For a morphism $u:S\to B$
it follows from Lemma 
\ref{lemma:standard_properties_of_commutes_relation}
that the conditions: 
\begin{enumerate}[(i)] \item $u$ and $c$ commute; \item
$\langle \alpha_1,\alpha_2 \rangle \kappa =\langle 0,1\rangle c = (c\times c)
\langle 0,1\rangle$ and $\langle \alpha_1,\alpha_2\rangle \beta u=\langle 1,1
\rangle u = (u\times u) \langle 1,1\rangle$ commute; \item $\kappa$ and $\beta
u$ commute 
\end{enumerate}
 are all equivalent.  Therefore, if the upper split
extension in \eqref{diagram:1} is faithful it follows that $u$ and $c$ commute
if and only if $u=0$ (as follows immediately from e.g.{} Proposition 2.5 of
\cite{CIGOLI_MANTOVANI:2012}).  
\end{proof}
Using these facts we obtain the following proposition which should be compared to
Theorem 7.1 of \cite{BORCEUX_BOURN:2007}:
\begin{proposition}
\label{proposition:X_centerless_c_X_centralizer_free}
Let $\C$ be a pointed protomodular category and let $X$ be an object
with trivial center, such that the generic split extension with kernel $X$ exists.
\begin{enumerate}[(i)]
\item The morphism $\conj_X$ is a
Bourn-normal monomorphism
with trivial centralizer,
and the object $[X]$ has trivial center.
\item Every $X$-groupoid is an equivalence relation.
\item The morphism $\conj_X$ is terminal in the category
of Bourn-normal monomorphisms with kernel $X$.
\item When $\C$ is in addition semi-abelian, for each internal crossed module  
$(B,X,\zeta,f)$ the morphism $f$ is a normal monomorphism.
\end{enumerate}
\end{proposition}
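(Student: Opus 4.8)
The plan is to obtain all four parts from the lemma immediately preceding this proposition, applied to the canonical morphism of $X$-groupoids \eqref{diagram:conjugation_morphism}, together with the terminality of the $X$-groupoid $T=(X,[X]\ltimes X,[X],p_1,p_2,i,m,k)$. For (i), I would first observe that $T$, being terminal in the category of $X$-groupoids, receives at most one morphism from each object and is therefore faithful. Since $X$ has trivial center and \eqref{diagram:conjugation_morphism} is a morphism of $X$-groupoids out of $\nabla(X)$ into this faithful groupoid, the preceding lemma applies with $A=[X]\ltimes X$, $B=[X]$, $\alpha_1=p_1$, $\alpha_2=p_2$ and $c=\conj_X$. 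It yields at once that $([X]\ltimes X,p_1,p_2)$ is an equivalence relation and that $\conj_X$ is a Bourn-normal monomorphism, normal to this relation, with trivial centralizer. The proposition following Lemma \ref{lemma:char_of_centerless} (in its special case $f=1$), applied to $e=\conj_X$, then gives that $[X]$ has trivial center.

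For (ii), let $G=(X,G_0,G_1,d,c,e,m,k_G)$ be any $X$-groupoid and let $(1_X,v,w): G\to T$ be the unique morphism supplied by terminality. Preservation of the kernel and of the codomain map gives $v k_G=k$ and $p_2 v=w c$, whence $\conj_X=p_2 k=p_2 v k_G=w\,(c k_G)$, using the identity $p_2 k=\conj_X$ noted above. As $\conj_X$ is a monomorphism by (i), its right-hand factor $c k_G$ is a monomorphism, so $\Ker(c k_G)=0$. Since $k_G$ is the kernel of $d$ one has $\Ker\langle d,c\rangle\cong\Ker(c k_G)=0$, and protomodularity forces $\langle d,c\rangle: G_1\to G_0\times G_0$ to be a monomorphism; a reflexive groupoid with monic $\langle d,c\rangle$ is an equivalence relation, so $G$ is one.

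For (iii), I would use that a Bourn-normal monomorphism $m: X\to Y$, normal to an equivalence relation $(R,r_1,r_2)$, makes $R$ an $X$-groupoid, since an equivalence relation carries a unique groupoid structure in the protomodular setting and $k_R=\ker(r_1)$ with $r_2 k_R=m$ provides the kernel datum. Terminality of $T$ then yields a unique morphism of $X$-groupoids $(1_X,u,w): R\to T$ with $u k_R=k$ and $p_2 u=w r_2$, so that $\conj_X=p_2 k=p_2 u k_R=w r_2 k_R=w m$; thus $w$ is a morphism from $m$ to $\conj_X$. I expect the main obstacle of the whole proof to be here: to conclude terminality of $\conj_X$ one must identify the category of Bourn-normal monomorphisms with kernel $X$ with the category of $X$-groupoids (all equivalence relations, by (ii)), which amounts to checking that a map $w: Y\to Y'$ commuting with the normal monomorphisms lifts uniquely to a morphism of the associated relations, using monicity of $\langle r_1',r_2'\rangle$ and the monadicity of $U:\KG(\C)\to\SplExt(\C)$. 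Granting this dictionary, $\conj_X$ corresponds to $T$ and is terminal.

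For (iv), assume $\C$ semi-abelian and let $(B,X,\zeta,f)$ be an internal crossed module. Under the equivalence between internal crossed modules whose underlying morphism has domain $X$ and $X$-groupoids, $(B,X,\zeta,f)$ corresponds to an $X$-groupoid $G$ with $G_0=B$ and $c k_G=f$, mirroring the terminal crossed module $([X],X,\tau,\conj_X)$. By (ii), $G$ is an equivalence relation, so $f=c k_G=r_2\circ\ker(r_1)$ is Bourn-normal to $G$ by the remark in the preliminaries that $r_2\circ\ker(r_1)$ is always Bourn-normal, and it is a monomorphism by the factorization $\conj_X=w f$ of (ii). Finally, in a semi-abelian (hence Barr-exact) category every equivalence relation is effective, so every Bourn-normal monomorphism is the kernel of the coequalizer of its relation; hence $f$ is a normal monomorphism. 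The one point to verify carefully is, via the explicit crossed-module/groupoid dictionary, that the underlying map of the crossed module is indeed recovered as $c k_G$.
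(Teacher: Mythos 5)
Your proof is correct and follows essentially the same route as the paper's: (i) from the preceding lemma applied to the morphism of $X$-groupoids $\nabla(X)\to(X,[X]\ltimes X,[X],p_1,p_2,i,m,k)$ together with faithfulness of this terminal $X$-groupoid, (ii) from terminality plus a kernel computation, (iii) from the correspondence between equivalence-relation $X$-groupoids and Bourn-normal monomorphisms with domain $X$, and (iv) from the crossed-module/groupoid equivalence plus Barr-exactness to upgrade Bourn-normal to normal monomorphisms. The one step you flag as unverified in (iii) --- that every $w$ with $wm=\conj_X$ lifts to a morphism of the associated equivalence relations --- is precisely the content of the equivalence of categories that the paper likewise asserts without proof, so your treatment is, if anything, more explicit than the original.
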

\begin{proof}
The claims (i) and (ii) are direct corollaries of the previous lemmas, while
the (iv) is obtained from (ii) via the equivalence of categories
between internal groupoids and internal crossed modules. The final claim follows by noting that
there is an equivalence of categories between $X$-groupoids which are
equivalence relations and Bourn-normal monomorphisms with domain $X$, and under
this equivalence the terminal $X$-groupoid is sent to $\conj_X$.  
\end{proof}
Recalling that for a group homomorphism $f:X\to Y$ the group 
$[X,Y,f]$ is the subgroup of $\Aut(X)\times \Aut(Y)$ consisting of those
pairs of automorphisms $(\theta,\phi)$ such that $f\theta =\phi f$, and $q_1$ and
$q_2$ are the first and second projections, one
sees that the following proposition applied to the category
of groups explains why for a normal subgroup $S$ of a group $X$ if the
centralizer of $S$ in $X$ is trivial, then each automorphism of $S$ has at most
one extension to $X$.  
\begin{proposition}
\label{proposition:m_trivial_centralizer=>q_1_mono}
Let $\C$ be a pointed protomodular category, and let 
$m:S\to X$ be a normal monomorphism in $\C$ such that the generic split
extensions with kernel $S$ and $(S,X,m)$ exist in $\C$ and $\C^\two$,
respectively.
If $m$ has trivial centralizer, then $q_1 : [S,X,m] \to [S]$ is a monomorphism.
\end{proposition}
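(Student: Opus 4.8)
The plan is to prove that $q_1$ has trivial kernel and then conclude, by protomodularity, that it is a monomorphism. First I would record two preliminary facts. On one hand $q_2\colon [S,X,m]\to[X]$ is a monomorphism: a morphism into its kernel classifies an action that is trivial on $X$, and hence, by the $m$-equivariance built into the morphism of split extensions $q_2\ltimes m$ together with $m$ being monic, is trivial on $S$ as well; such a morphism classifies the trivial split extension and so, by uniqueness in the universal property of the generic split extension, vanishes. On the other hand, the action of $[S,X,m]$ on $S$ is the generic $[S]$-action restricted along $q_1$, which is the content of the morphism $q_1\ltimes 1$. Writing $n\colon \Ker(q_1)\to[S,X,m]$ for the kernel of $q_1$, it therefore suffices to prove $q_2 n=0$: since $q_2$ is monic this forces $n=0$, i.e. $\Ker(q_1)=0$.

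Next I would transport a commutativity statement from $S$ into the semidirect product over $X$. Because $q_1 n=0$ and the $S$-action factors through $q_1$, the subobject $\Ker(q_1)$ acts trivially on $S$; equivalently, composing $n$ with the section $[S,X,m]\to[S,X,m]\ltimes S$ gives a morphism that commutes with the kernel $k\colon S\to[S,X,m]\ltimes S$. Applying the morphism of split extensions $q_2\ltimes m$ and part (iii) of Lemma~\ref{lemma:standard_properties_of_commutes_relation} (the commutes relation is preserved by morphisms), and using that $q_2\ltimes m$ carries the section to the section and the kernel $S$ to $X$ via $m$, I obtain that $\iota:=i\,q_2\,n\colon \Ker(q_1)\to[X]\ltimes X$ commutes with $k\,m\colon S\to[X]\ltimes X$, where $k$ and $i$ now denote the kernel and section of the generic split extension of $X$.

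The heart of the argument is to upgrade commutativity with $k\,m(S)$ to commutativity with $k(X)$; this is the step that consumes both hypotheses. Since $m$ is a normal monomorphism, $k\,m(S)$ is normal in $k(X)$, so the commutator of $k(X)$ and $k\,m(S)$ is contained in $k\,m(S)$; combined with the vanishing of the commutator of $\iota(\Ker(q_1))$ and $k\,m(S)$ from the previous paragraph, the three-subobject lemma shows that the commutator $D$ of $\iota(\Ker(q_1))$ and $k(X)$ --- a subobject of $k(X)\cong X$ --- commutes with $k\,m(S)$. Hence $D\to X$ commutes with $m$, so it factors through the centralizer $Z_X(S,m)$, which is $0$ by hypothesis; therefore $D=0$, i.e. $\iota(\Ker(q_1))$ and $k(X)$ commute. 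This is the categorical incarnation of the group-theoretic fact that an automorphism $\phi$ fixing a normal subgroup $S$ sends each $x$ to $x\cdot(x^{-1}\phi(x))$ with $x^{-1}\phi(x)$ lying in the centralizer of $S$.

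Finally, from the commutativity of $\iota=i\,q_2\,n$ and $k$ I would extract a morphism $\varphi\colon \Ker(q_1)\times X\to[X]\ltimes X$ exhibiting a morphism of split extensions from the product extension $X\to \Ker(q_1)\times X\to \Ker(q_1)$ to the generic split extension of $X$, with base component $q_2 n$ (here one uses that $\C$, being pointed protomodular, is unital, so that the two product inclusions are jointly epimorphic). As the product extension is also classified by $0$, uniqueness forces $q_2 n=0$, and the reduction of the first paragraph then completes the proof. I expect the third paragraph to be the main obstacle: transferring commutativity from $S$ to all of $X$ is exactly where the normality of $m$ and the triviality of its centralizer must be combined, and making the three-subobject and commutator computation precise in the present generality is the only genuinely delicate point.
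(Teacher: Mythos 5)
Your overall skeleton matches the paper's at both ends: like the paper, you reduce the statement (via protomodularity and the fact that $q_2\colon[S,X,m]\to[X]$ is a monomorphism) to showing $q_2n=0$ for $n=\ker(q_1)$, and your second and fourth paragraphs (the translation between trivial actions and commutation of the section with the kernel, and the final uniqueness argument against the product extension) are correct. The genuine gap is exactly where you predicted it: the third paragraph. It requires (a) forming the Huq/Higgins commutator subobject $D=[\iota(\Ker(q_1)),k(X)]$, (b) the containment $[k(X),km(S)]\leq km(S)$ coming from normality, and (c) the three subobjects lemma. None of this is available in the stated generality. The proposition assumes only that $\C$ is pointed protomodular (plus existence of the two generic split extensions): commutator subobjects need not exist at all in such a category, since their construction requires images and joins, i.e.\ at least a regular or semi-abelian environment. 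Worse, even when $\C$ is semi-abelian the three subobjects lemma is \emph{not} a theorem: it is known to hold under additional hypotheses (algebraic coherence, in the sense of Cigoli, Gray and Van der Linden), and the Hall--Witt-type identities underlying it are unavailable in general. So your ``heart of the argument'' proves the statement for groups, Lie algebras and other algebraically coherent categories, but not for the proposition as stated. (A smaller issue of the same kind: your justification that $q_2$ is monic is phrased in terms of internal actions, which presupposes the semi-abelian equivalence $\SplExt(\C)\simeq\Act(\C)$; the paper instead cites a result valid in the protomodular setting.)

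The paper obtains the needed commutation with no commutator objects at all, using the principle that \emph{normal monomorphisms with zero intersection commute} (Proposition 3.3.2 of \cite{BORCEUX_BOURN:2004}), which is a finite-limit statement valid in any pointed protomodular category. Concretely, it first builds $u\colon X\to[S,X,m]$ with $q_1um=\conj_S$ and $q_2u=\conj_X$. Then: the normal monomorphisms $m$ and $\ker(q_1u)$ intersect in $\Ker(\conj_S)=Z(S)=0$ (triviality of the centralizer of $m$ forces $Z(S)=0$), hence they commute, hence $\ker(q_1u)$ factors through $Z_X(S,m)=0$; next, $u$ and $\kappa=\ker(q_1)$ are normal monomorphisms ($u$ because $q_2u=\conj_X$ is normal by Proposition \ref{proposition:X_centerless_c_X_centralizer_free} and $q_2$ is monic) whose intersection is $\ker(q_1u)=0$, hence they commute; pushing forward along $q_2$ via Lemma \ref{lemma:standard_properties_of_commutes_relation}(iii), $\conj_X$ and $q_2\kappa$ commute, and since $\conj_X$ has trivial centralizer (Proposition \ref{proposition:X_centerless_c_X_centralizer_free} again) we get $q_2\kappa=0$, hence $\kappa=0$. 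If you replace your three-subobjects step with this intersection argument, your outline goes through in full generality.
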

\begin{proof} By considering the diagram \eqref{diagram:bourn_normal}, via
 the universal property of the split extension classifiers of $(S,X,m)$ and
 $S$,
there is a (unique) morphism $u : X\to [S,X,m]$ making the diagram 
\[
\xymatrix{
S\ar[d]_{\conj_S}\ar[r]^{m}&X\ar@{=}[r]\ar[d]^{u}&X\ar[d]^{\conj_X}\\
[S]&[S,X,m]\ar[r]_-{q_2}\ar[l]^-{q_1}&[X]
}
 \]
 commute.  Let $\kappa : K \to [S,X,m]$ be
the kernel of $q_1$. We will show that $K=0$. Consider the diagram
\[ \xymatrix{
I \ar[r] \ar[d] & J \ar[r]\ar[d]^{\lambda} & K \ar[r]\ar[d]^{\kappa} & 0
\ar[d]\\ S \ar[r]_{m} & X \ar[r]_-{u} & [S,X,m] \ar[r]_{q_1} & [S] }
\]
in which
all squares are pullbacks. Since $S$ has trivial center and $\conj_S=q_1um$ it
follows that $I=\Ker(\conj_S)=Z(S)=0$. This means that $m$ and
 $\lambda$ commute (see e.g. Proposition 3.3.2 of
 \cite{BORCEUX_BOURN:2004}) and hence $J=0$. However, since $X$ has trivial center it
follows, by Proposition
\ref{proposition:X_centerless_c_X_centralizer_free},
that $\conj_X$ is a normal monomorphism, and hence
since $q_2$ is a monomorphism (see Proposition 4.5 of \cite{GRAY:2013b})
it follows that $u$ is a normal monomorphism too. This means
that $u$ and
$\kappa$ commute and therefore so do $\conj_X$ and $q_2\kappa$. But, according to
Proposition 
\ref{proposition:X_centerless_c_X_centralizer_free},
 $\conj_X$ has trivial centralizer, and hence $K=0$ as desired.
\end{proof}
The following proposition, applied to the category of groups, implies that
each automorphism $\theta$ of a group
$X$ with trivial center admits a unique extension $\varphi$  to the
automorphism group $\Aut(X)$ in such a way that $c_X\theta = \varphi c_X$.
\begin{proposition}
\label{proposition:X_centerless_[c_X]=c_[X]}
Let
$\C$ be a pointed protomodular category and let $X$ be an object in $\C$
such that the generic split extension with kernel $X$ exists.
If the generic split extension with kernel $(X,[X],\conj_X)$ exists in $\C^\two$,
and $X$ has trivial center, then there is a unique morphism $\varphi : [X]\ltimes X \to
[[X]]\ltimes [X]$ making the diagram 
\[ \xymatrix{ X\ar[d]_{\conj_X} \ar[r]^-{k} &
[X]\ltimes X\ar[d]^-{\varphi} \ar@<0.5ex>[r]^-{p_1} &
[X]\ar@<0.5ex>[l]^-{i}\ar[d]^-{\conj_{[X]}}\\ [X] \ar[r]^-{k} & [[X]]\ltimes [X]
\ar@<0.5ex>[r]^-{p_1} & [[X]]\ar@<0.5ex>[l]^-{i} } \] a generic split extension
in $\C^\two$.  
\end{proposition}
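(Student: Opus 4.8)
The plan is to reduce the statement to the assertion that the comparison morphism $q_1\colon [X,[X],\conj_X]\to[X]$ is an isomorphism, and then to transport the generic split extension of $\conj_X$ in $\C^\two$ along it. Since $Z(X)=0$, Proposition \ref{proposition:X_centerless_c_X_centralizer_free}(i) tells us that $\conj_X\colon X\to[X]$ is a Bourn-normal monomorphism with trivial centralizer and that $[X]$ again has trivial center. Applying Proposition \ref{proposition:m_trivial_centralizer=>q_1_mono} to $m=\conj_X$ (the generic split extensions with kernels $X$ and $(X,[X],\conj_X)$ being available by hypothesis) shows that $q_1$ is a monomorphism, so it will suffice to produce a section of $q_1$: by protomodularity this forces $q_1$, and hence $q_1\ltimes 1$, to be an isomorphism.

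The section is extracted from a single split extension in $\C^\two$ coming from the Bourn-normality of $\conj_X$. Recalling that $\conj_X$ is Bourn-normal to the equivalence relation $([X]\ltimes X,p_1,p_2)$, I would instantiate diagram \eqref{diagram:bourn_normal} at $m=\conj_X$, the ambient object being $[X]$, so that its lower row is the indiscrete relation $[X]\times[X]$ underlying $\nabla([X])$. Using $p_2k=\conj_X$ and the pointwise computation of limits in $\C^\two$, this diagram is precisely a split extension in $\C^\two$ whose kernel object is $(X,[X],\conj_X)$, whose domain row is the generic split extension with kernel $X$, whose codomain row is the split extension underlying $\nabla([X])$, and whose vertical maps are $\conj_X$, $\langle p_1,p_2\rangle$ and $1_{[X]}$. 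The terminality of the generic split extension of $\conj_X$ in the fibre $K^{-1}(X,[X],\conj_X)$ then yields a unique morphism of split extensions in $\C^\two$ to it, necessarily the identity on the kernel.

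Everything now follows by reading off the two rows of this terminal morphism. Its domain-row component is a morphism in $K^{-1}(X)$ out of the generic split extension with kernel $X$; postcomposing with the canonical morphism $(q_1\ltimes 1,q_1)$ into the generic split extension with kernel $X$ gives an endomorphism of that terminal object, hence the identity, so its base component is a section $g\colon[X]\to[X,[X],\conj_X]$ with $q_1g=1_{[X]}$. Thus $q_1$ is an isomorphism and $g=q_1^{-1}$. Its codomain-row component is, by the universal property of the generic split extension with kernel $[X]$, the unique classifying morphism of $\nabla([X])$, whose base component is by definition the conjugation morphism $\conj_{[X]}$ (diagram \eqref{diagram:conjugation_morphism} at $[X]$). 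Compatibility of the terminal morphism on base objects then reads $q_2g=\conj_{[X]}$, that is $q_2q_1^{-1}=\conj_{[X]}$.

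It remains to transport. As $q_1$ is an isomorphism, $(q_1\ltimes 1,q_1)$ is an isomorphism of split extensions by the split short five lemma, and carrying the generic split extension of $\conj_X$ across it, identically on the codomain row, replaces its domain row by the generic split extension with kernel $X$ and its middle and right vertical maps by $\varphi:=(q_2\ltimes\conj_X)(q_1\ltimes 1)^{-1}$ and $q_2q_1^{-1}=\conj_{[X]}$; this is exactly the displayed diagram, which, being isomorphic to a terminal object of $K^{-1}(X,[X],\conj_X)$, is itself a generic split extension. Uniqueness of $\varphi$ is then immediate, since $k$ and $i$ are jointly strongly epimorphic in the protomodular setting and $\varphi$ is determined by $\varphi k=k\conj_X$ and $\varphi i=i\conj_{[X]}$. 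I expect the crux to be the observation that the section of $q_1$ and the identity $q_2q_1^{-1}=\conj_{[X]}$ must be read off from one and the same terminal morphism; the checks that the Bourn-normality diagram genuinely is a split extension in $\C^\two$ with the correct kernel, and that its codomain row classifies to $\conj_{[X]}$, are routine.
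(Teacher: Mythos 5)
Your proposal is correct and follows essentially the same route as the paper's proof: both build the split extension in $\C^\two$ with kernel $(X,[X],\conj_X)$ whose domain row is the generic split extension of $X$ and whose codomain row is the indiscrete relation on $[X]$, use terminality of the generic split extension of $(X,[X],\conj_X)$ to produce a section of $q_1$ (identifying the codomain component as $\conj_{[X]}$), and combine this with Propositions \ref{proposition:X_centerless_c_X_centralizer_free} and \ref{proposition:m_trivial_centralizer=>q_1_mono} to conclude $q_1$ is an isomorphism. The only differences are cosmetic: you prove the identification of the base morphism with $\conj_{[X]}$ directly from universal properties where the paper cites Lemma 4.2 of \cite{GRAY:2013b}, and you spell out the transport and uniqueness steps that the paper compresses into ``$\theta$ is an isomorphism which completes the claim.''
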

 
\begin{proof} Let $X$ be an object with trivial
center. Consider the diagram 
\[ \xymatrix{ X \ar[r]^-{k}\ar[d]_{\conj_X} & [X]\ltimes X
\ar@<0.5ex>[r]^-{p_1}\ar[d]^{\langle p_1,p_2\rangle} &
[X]\ar@<0.5ex>[l]^-{i}\ar@{=}[d] \ar@/^4ex/[rr]^{1_{[X]}}\ar[r]^-{\theta} &
[X,[X],\conj_X]\ar[d]^-{q_2} \ar[r]^-{q_1} & [X]\\ [X] \ar[r]^-{\langle 0,1\rangle}
& [X]\times [X] \ar@<0.5ex>[r]^-{\pi_1} & [X] \ar@<0.5ex>[l]^-{\langle
1,1\rangle} \ar[r]_{\conj_{[X]}} & [[X]] }
 \]
 where the morphism in $\C^{\two}$ displayed on the right is the unique
 morphism obtained from the split extension on the left via the universal
 property of the split extension
 classifier of $(X,[X],c_X)$ and which has lower morphism $\conj_{[X]}$
according to Lemma 4.2 in \cite{GRAY:2013b}.  The universal property of the
split extension classifier $[X]$ then shows that $q_1\theta = 1_{[X]}$. However,
since by Proposition
\ref{proposition:X_centerless_c_X_centralizer_free}
the
morphism $\conj_{X}$ is a Bourn-normal monomorphism with trivial centralizer, it follows,
by the previous proposition,
that $q_1: [X,[X],\conj_X]\to [X]$ is a monomorphism and hence an isomorphism. This
means that $\theta$ is an isomorphism which completes the claim.  
\end{proof}
\section{Complete objects}
\label{section:complete}
In this section we study four notions of
completeness and explain how Baer's result can be recovered categorically.
For a pointed category $\C$ we call a morphism a protosplit monomorphism
\cite{BORCEUX_JANELIDZE_KELLY:2005b}
if it is the kernel of a split
epimorphism. As mentioned above we define:
\begin{definition}
\label{definition:completeness}
Let $\C$ be a pointed
category. An object $X$ is called 
\begin{enumerate}[(i)]
\item \emph{\pinj} if every protosplit monomorphism with domain $X$ is a
split monomorphism;
\item \emph{\inj} if every normal monomorphism with domain $X$ is a
split monomorphism;
\item \emph{\binj} if every Bourn-normal monomorphism with domain $X$ is a
split monomorphism;
\item \emph{\scmp} if every protosplit monomorphism with domain $X$ is a
split monomorphism with a unique section.
\end{enumerate}
 
\end{definition}
\begin{remark}
Since in a pointed Barr-exact category every Bourn-normal monomorphism is a
normal monomorphism it follows that \injness{} and \binjness{} coincide in
the pointed Barr-exact context.
\end{remark}

Most of the content of the following lemma follows from Corollary 3.3.3 of
\cite{BORCEUX_BOURN:2004}.
\begin{lemma}
\label{lemma:split_extensions_are_products}
Let $X$ be a \pinj{}
object in a pointed protomodular category $\C$ with finite limits. For each
split extension 
\[
\xymatrix{ X \ar[r]^{\kappa} & A \ar@<0.5ex>[r]^{\alpha} & B
\ar@<0.5ex>[l]^{\beta}
}
\]
there exists a morphism $\theta: B\to X$ and an
isomorphism $\psi: A \to B\times X$ making the diagram 
\[ \xymatrix{ X
\ar[r]^{\kappa} & A \ar@<0.5ex>[r]^{\alpha}\ar[d]^{\psi} & B
\ar@<0.5ex>[l]^{\beta}\\ X \ar[r]_-{\langle 0,1\rangle}\ar@{=}[u] & B\times X
\ar@<0.5ex>[r]^-{\pi_1} & B \ar@<0.5ex>[l]^-{\langle 1,\theta \rangle}
\ar@{=}[u]\\ } \] an isomorphism of split extensions.  
\end{lemma}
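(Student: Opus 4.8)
The plan is to extract a retraction from the \pinj{}ness hypothesis, read off the isomorphism $\psi$ directly, and close with the split short five lemma. First I would observe that $\kappa$ is by definition the kernel of $\alpha$, and that $\alpha$ is a split epimorphism (split by $\beta$); hence $\kappa$ is a protosplit monomorphism with domain $X$. Since $X$ is \pinj{}, the monomorphism $\kappa$ must be a split monomorphism, so there is a retraction $\rho : A \to X$ with $\rho\kappa = 1_X$.

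Next I would name the candidate data. Set $\theta := \rho\beta : B \to X$ and $\psi := \langle \alpha, \rho\rangle : A \to B\times X$. I would then check the three identities that make the displayed square a morphism of split extensions, in the notation of \eqref{diag:mor_split_ext} with the outer vertical components taken to be $1_X$ and $1_B$. We have $\pi_1\psi = \alpha$ by the definition of $\psi$; we have $\psi\kappa = \langle \alpha\kappa, \rho\kappa\rangle = \langle 0,1\rangle$ because $\alpha\kappa = 0$ and $\rho\kappa = 1_X$; and we have $\psi\beta = \langle \alpha\beta, \rho\beta\rangle = \langle 1_B, \theta\rangle$ because $\alpha\beta = 1_B$. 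It remains only to note that the bottom row is a genuine split extension, i.e.\ that $\langle 0,1\rangle$ is the kernel of $\pi_1 : B\times X \to B$ and that $\pi_1\langle 1,\theta\rangle = 1_B$, both of which are immediate from the universal property of the product.

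Finally, with the diagram established as a morphism of split extensions whose two outer vertical components are identities, hence isomorphisms, protomodularity, in the form of the split short five lemma that defines the protomodular condition in the excerpt, forces the middle component $\psi$ to be an isomorphism. This yields the required isomorphism of split extensions, with the section of $\pi_1$ being precisely $\langle 1,\theta\rangle = \psi\beta$.

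I do not expect a genuine obstacle here: the whole argument is a direct unwinding of the definition of \pinj{} together with the split short five lemma. The only point needing a little care is the bookkeeping, namely choosing $\psi = \langle\alpha,\rho\rangle$ (rather than attempting to build a two-sided inverse by hand) so that all three compatibility squares commute on the nose, after which the isomorphism is supplied for free by protomodularity instead of requiring an explicit inverse.
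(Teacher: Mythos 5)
Your proposal is correct and follows exactly the paper's own argument: the retraction $\rho$ is the paper's $\lambda$, your $\psi=\langle\alpha,\rho\rangle$ and $\theta=\rho\beta$ are precisely the paper's $\langle\alpha,\lambda\rangle$ and $\lambda\beta$, and both proofs conclude via the split short five lemma. No differences worth noting.
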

 
\begin{proof}
Suppose that $X$ is a \pinj{} object in a pointed protomodular category $\C$ and let 
\[
\xymatrix{ X \ar[r]^{\kappa} & A \ar@<0.5ex>[r]^{\alpha} & B
\ar@<0.5ex>[l]^{\beta} } \] be a split extension. By assumption there exists a
morphism $\lambda : A \to X$ such that $\lambda \kappa =1_X$. It easily follows
that the diagram 
\[ \xymatrix{ X \ar[r]^{\kappa} & A
\ar@<0.5ex>[r]^{\alpha}\ar[d]^{\langle \alpha,\lambda\rangle} & B
\ar@<0.5ex>[l]^{\beta}\\ X \ar[r]_-{\langle 0,1\rangle}\ar@{=}[u] & B\times X
\ar@<0.5ex>[r]^-{\pi_1} & B \ar@<0.5ex>[l]^-{\langle 1,\lambda \beta \rangle}
\ar@{=}[u]\\ } \] is a morphism of split extensions and hence by protomodularity
$\langle \alpha,\lambda\rangle$ is an isomorphism.  
\end{proof}
We will also need the following easy lemma (see e.g. Corollary 3.3.3 of
\cite{BORCEUX_BOURN:2004}).
\begin{lemma}
\label{lemma:split_bourn-normal=product_inclusion}
Let $\C$ be a pointed protomodular category and let $m: S\to X$ be a
Bourn normal monomorphism. If $m$ is a split monomorphism, then $m$ is
 a \emph{product inclusion}, that is, there exists an object $T$ and an isomorphism
 $\varphi : S\times T \to X$ such that $\varphi \langle 1, 0 \rangle = m$.
\end{lemma}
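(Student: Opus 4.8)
The plan is to exhibit $m$ as the section-image of a split extension and then to prove that this split extension is a direct product, using protomodularity. First I would choose a retraction $\rho\colon X\to S$ with $\rho m=1_S$, which exists because $m$ is a split monomorphism, and let $t\colon T\to X$ be the kernel of $\rho$. This produces a split extension
\[
\xymatrix{
T\ar[r]^-{t} & X\ar@<0.5ex>[r]^-{\rho} & S\ar@<0.5ex>[l]^-{m}
}
\]
with kernel $T$. A short computation shows that the pullback $S\times_X T$ is $0$: writing $p_S,p_T$ for its projections, so that $mp_S=tp_T$, applying $\rho$ and using $\rho m=1_S$ and $\rho t=0$ forces $p_S=0$, whence $tp_T=mp_S=0$ and $p_T=0$ as $t$ is a monomorphism. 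Since the comparison $\langle p_S,p_T\rangle\colon S\times_X T\to S\times T$ is a monomorphism and equals $0$, we get $S\times_X T=0$; informally, $S$ and $T$ are subobjects of $X$ with trivial intersection.

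The heart of the matter is to show that $m$ and $t$ commute in the Huq sense. Both are normal: $t$ is a kernel, and $m$ is Bourn-normal by hypothesis. The argument is the categorical form of the classical group computation: for normal subobjects one has $[S,T]\le[S,X]\le S$ by normality of $S$ and $[S,T]\le[X,T]\le T$ by normality of $T$, hence $[S,T]\le S\wedge T=0$, so that $S$ and $T$ centralize each other (cf.\ Corollary 3.3.3 of \cite{BORCEUX_BOURN:2004}). I expect this to be the main obstacle, because in a merely protomodular (non-exact) category one must take care that this commutator reasoning is available; what is actually needed, and what the cited result supplies, is simply that two normal monomorphisms with trivial intersection cooperate. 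Granting this, since $\C$ is unital (being pointed protomodular) the commutativity of $m$ and $t$ yields a unique cooperator $\varphi\colon S\times T\to X$ with $\varphi\langle 1,0\rangle=m$ and $\varphi\langle 0,1\rangle=t$.

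Finally I would check that $\varphi$ is an isomorphism by viewing it as a morphism of split extensions and invoking the split short five lemma. Comparing the product split extension $T\xrightarrow{\langle 0,1\rangle}S\times T\xrightarrow{\pi_1}S$, with section $\langle 1,0\rangle$, to the split extension above, the triple $(1_T,\varphi,1_S)$ is a morphism of split extensions: on kernels $\varphi\langle 0,1\rangle=t$ and on sections $\varphi\langle 1,0\rangle=m$ hold by construction, while $\rho\varphi=\pi_1$ follows from $\rho\varphi\langle 1,0\rangle=\rho m=1_S=\pi_1\langle 1,0\rangle$ and $\rho\varphi\langle 0,1\rangle=\rho t=0=\pi_1\langle 0,1\rangle$ together with the joint strong epimorphicity of $\langle 1,0\rangle$ and $\langle 0,1\rangle$ in the unital category $\C$. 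Since $1_T$ and $1_S$ are isomorphisms, protomodularity forces $\varphi$ to be an isomorphism; as $\varphi\langle 1,0\rangle=m$, this exhibits $m$ as a product inclusion with complementary factor $T=\Ker(\rho)$, completing the proof.
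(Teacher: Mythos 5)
Your proof is correct, and it is in substance the argument the paper outsources: the paper proves nothing here, deferring to Corollary 3.3.3 of \cite{BORCEUX_BOURN:2004}, and your scheme (complement $T=\Ker(\rho)$, trivial intersection, cooperation of the two normal subobjects, split short five lemma) is exactly what that citation packages. The one step deserving scrutiny is the one you flag yourself. The commutator chain $[S,T]\le S\wedge T$ is only heuristic in this setting --- in a finitely complete protomodular category Huq commutators need not exist as subobjects --- so the step must be used in the form you then state: two normal monomorphisms with trivial intersection cooperate. That statement is Proposition 3.3.2 of \cite{BORCEUX_BOURN:2004}, the very result this paper invokes in the proof of Proposition \ref{proposition:m_trivial_centralizer=>q_1_mono}; citing Corollary 3.3.3 for it, as you do, is circular, since Corollary 3.3.3 is exactly what the paper quotes for the present lemma itself. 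Note also that your $m$ is merely Bourn-normal rather than a kernel, so you need the cooperation result at that level of generality; this is harmless, because ``normal monomorphism'' in \cite{BORCEUX_BOURN:2004} means Bourn's intrinsic notion (normal to some equivalence relation), i.e.\ precisely Bourn-normality here, but your write-up does slide between the paper's two distinct notions. Finally, if you want to avoid quoting any cooperation result at all, your proof closes up as follows: let $(R,r_1,r_2)$ be an equivalence relation to which $m$ is Bourn-normal, with $k=\ker(r_1)$, $r_2k=m$ and reflexivity map $s$; then $\rho r_2$ retracts $k$, so protomodularity makes $\langle r_1,\rho r_2\rangle\colon R\to X\times S$ an isomorphism, and writing $\psi$ for its inverse one checks $\psi\langle 0,1\rangle=k$ and $\psi\langle t,0\rangle=st$, whence $r_2\,\psi\,(t\times 1_S)\colon T\times S\to X$ is a cooperator of $t$ and $m$; your final split-short-five-lemma step then applies verbatim.
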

\begin{proposition}
\label{proposition:proto-complete+centerless=strong-complete}
Let $\C$ be a pointed protomodular category. An object $X$ in $\C$ is \scmp{}
if and only if
$X$ is \pinj{} and has trivial center.
\end{proposition}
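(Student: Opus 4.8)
The plan is to prove both implications by reducing everything to the two lemmas already available: the characterization of trivial center in terms of unique sections (Lemma \ref{lemma:char_of_centerless}) and the product decomposition of split extensions with \pinj{} kernel (Lemma \ref{lemma:split_extensions_are_products}). Neither direction should require any genuinely new construction.

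For the forward implication, I would assume $X$ is \scmp{}. Since a split monomorphism admitting a unique section is in particular a split monomorphism, $X$ is immediately \pinj{}, so only triviality of the center remains. Here I would fix an arbitrary object $Y$ and observe that $\langle 1,0\rangle : X\to X\times Y$ is the kernel of the split epimorphism $\pi_2$ (split by $\langle 0,1\rangle$), hence a protosplit monomorphism with domain $X$; because $X$ is \scmp{}, this morphism has a unique section. As $Y$ was arbitrary, Lemma \ref{lemma:char_of_centerless} then yields $Z(X)=0$.

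For the converse, I would assume $X$ is \pinj{} with trivial center and take a protosplit monomorphism $\kappa:X\to A$, say the kernel of a split epimorphism $\alpha:A\to B$ with section $\beta$. By Lemma \ref{lemma:split_extensions_are_products} there is a section $\lambda$ of $\kappa$ and an isomorphism of split extensions $\psi=\langle\alpha,\lambda\rangle:A\to B\times X$ satisfying $\psi\kappa=\langle 0,1\rangle$. Precomposition with $\psi$ then sets up a bijection between sections of $\kappa$ and sections of $\langle 0,1\rangle:X\to B\times X$, so it suffices to prove that the latter are unique. Since the swap isomorphism $B\times X\cong X\times B$ identifies $\langle 0,1\rangle:X\to B\times X$ with $\langle 1,0\rangle:X\to X\times B$, the hypothesis $Z(X)=0$ together with Lemma \ref{lemma:char_of_centerless} (applied with $Y=B$) guarantees a unique section of $\langle 0,1\rangle$. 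Transporting back along $\psi$ shows $\kappa$ has a unique section, so $X$ is \scmp{}.

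The only point requiring care, and thus the natural candidate for the main obstacle, is the bookkeeping around $\psi$: one must verify $\psi\kappa=\langle 0,1\rangle$ (routine from $\alpha\kappa=0$ and $\lambda\kappa=1_X$) so that composition genuinely matches sections of $\kappa$ with sections of $\langle 0,1\rangle$ \emph{and} preserves the uniqueness count. Given that, the symmetry argument passing between $\langle 0,1\rangle$ and $\langle 1,0\rangle$ is immediate, and no further calculation is needed.
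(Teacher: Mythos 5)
Your proof is correct and follows essentially the same route as the paper's: both directions reduce to combining Lemma \ref{lemma:split_extensions_are_products} (every protosplit monomorphism with \pinj{} domain is, up to isomorphism of split extensions, a product inclusion) with Lemma \ref{lemma:char_of_centerless} (trivial center is equivalent to unique sections of the product inclusions $\langle 1,0\rangle : X\to X\times Y$). The only difference is that you spell out the transport of sections along the isomorphism $\psi=\langle\alpha,\lambda\rangle$ and the swap $B\times X\cong X\times B$, which the paper's one-sentence proof leaves implicit.
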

\begin{proof}
Let $X$ be a \pinj{} object in $\C$.
Since every
protosplit normal monomorphism with domain $X$ is, by Lemma
\ref{lemma:split_extensions_are_products}
up to isomorphism, of the form $\langle 1,0\rangle : X\to X\times Y$
for some $Y$, the claim follows from Lemma
\ref{lemma:char_of_centerless}
\end{proof}
\begin{remark}
As mentioned above categories in which every object is \scmp{} are called
coarsely action representable. These include the opposite category of pointed sets.
\end{remark}
\begin{proposition}
\label{proposition:implications_between_completeness}
In a pointed protomodular category the implications hold:\\
\scmpness{} $\Rightarrow$ \binjness{} $\Rightarrow$ \injness{} $\Rightarrow$ \pinjness{}.
\end{proposition}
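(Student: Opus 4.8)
The plan is to establish the three implications in the stated chain by peeling off the definitions and using the relationships between the various flavours of monomorphism. The implications
$$\text{\scmpness} \Rightarrow \text{\binjness} \Rightarrow \text{\injness} \Rightarrow \text{\pinjness}$$
split into one ``easy direction'' step and two ``class inclusion'' steps, so I would organize the argument around which class of monomorphisms is being quantified over in each definition.

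**First I would** dispatch the middle implication $\text{\binjness} \Rightarrow \text{\injness}$, which rests on the fact that in a pointed protomodular category every normal monomorphism is Bourn-normal. Concretely, if $m : X \to Y$ is a normal monomorphism, meaning it is the kernel of some morphism, then $m$ is Bourn-normal to the kernel pair of that morphism; the diagram \eqref{diagram:bourn_normal} furnishes the required comparison morphism. Hence every normal monomorphism with domain $X$ is a Bourn-normal monomorphism with domain $X$, so the hypothesis of \binjness{} (that every Bourn-normal monomorphism with domain $X$ splits) immediately forces every normal monomorphism with domain $X$ to split, which is precisely \injness{}. The same ``larger class of monomorphisms splits, hence smaller class splits'' pattern gives $\text{\injness} \Rightarrow \text{\pinjness}$: every protosplit monomorphism (kernel of a split epimorphism) is in particular a normal monomorphism (kernel of a morphism), so \injness{} subsumes \pinjness{}.

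**The remaining and only genuinely substantive step** is the leftmost implication $\text{\scmpness} \Rightarrow \text{\binjness}$, and this is where I expect the main obstacle to lie, since it is not a mere inclusion of quantification classes — \binjness{} concerns Bourn-normal monomorphisms whereas \scmpness{} is phrased in terms of protosplit monomorphisms. I would route this through Proposition \ref{proposition:proto-complete+centerless=strong-complete}: a \scmp{} object $X$ is in particular \pinj{} and has trivial center. Suppose $m : X \to Y$ is a Bourn-normal monomorphism. Trivial center means $Z(X) = 0$, and by the analysis leading to Proposition \ref{proposition:X_centerless_c_X_centralizer_free} (together with Lemma \ref{lemma:char_of_centerless}) a Bourn-normal monomorphism out of a trivial-center object behaves, up to the centralizer obstruction, like a protosplit one; the point is to produce from the Bourn-normal structure a split extension realizing $m$ as its kernel so that \pinjness{} can be invoked to split it.

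**The hard part will be** bridging from a general Bourn-normal monomorphism to a protosplit monomorphism so that the \pinj{} hypothesis applies. I anticipate using that $m$ is Bourn-normal to some equivalence relation $(R, r_1, r_2)$, giving via diagram \eqref{diagram:bourn_normal} a split extension with $r_1$ split by $s$ and kernel essentially $m$ (more precisely $r_2 k$, by the final remark of Section \ref{section: preliminaries}); this exhibits $m$ as fitting into a split-epimorphism context, whence \pinjness{} yields a splitting and Lemma \ref{lemma:split_bourn-normal=product_inclusion} upgrades it to a product inclusion. The trivial-center hypothesis then guarantees uniqueness is not an issue and that the splitting genuinely exists for $m$ itself rather than merely for an associated protosplit monomorphism. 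I would close by remarking that the chain is strict in general, but the proposition only asserts the implications, so once the three steps above are assembled the proof is complete.
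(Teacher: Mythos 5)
Your handling of the two easy implications is correct and matches what the paper dismisses as trivial: every normal monomorphism is Bourn-normal (a kernel is Bourn-normal to the kernel pair of the morphism it is the kernel of), and every protosplit monomorphism is normal, so splitting the larger class forces splitting of the smaller. The route you choose for \scmpness{} $\Rightarrow$ \binjness{} --- through Proposition \ref{proposition:proto-complete+centerless=strong-complete}, so that \pinjness{} splits the protosplit monomorphism $k=\ker(r_1):X\to R$ and Lemma \ref{lemma:split_extensions_are_products} gives an isomorphism $R\cong Y\times X$ carrying $k$ to $\langle 0,1\rangle$ and the reflexivity section $s$ to $\langle 1,\theta\rangle$ --- is viable and close to the paper's. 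But you stop exactly where the real work begins.

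The gap: splitting $k$ is not splitting your $m=r_2k$, and nothing you write produces a retraction of $m$ itself. Your appeal to Lemma \ref{lemma:split_bourn-normal=product_inclusion} cannot close this: applied to $m$ it is circular (it presupposes $m$ is split), and applied to $k$ it adds nothing beyond Lemma \ref{lemma:split_extensions_are_products}. The missing idea --- which is the entire content of the paper's proof --- is to use the comparison morphism $\tilde m : X\times X\to R$ supplied by Bourn-normality: composing the upper morphism of split extensions in diagram \eqref{diagram:bourn_normal} (the paper's \eqref{diagram:equiv}) with the isomorphism $R\cong Y\times X$ yields a morphism of split extensions whose middle component has the form $\langle m\pi_1,\phi\rangle : X\times X\to Y\times X$, where kernel-compatibility forces $\phi\langle 0,1\rangle=1_X$, i.e.\ $\phi$ is a retraction of the protosplit monomorphism $\langle 0,1\rangle : X\to X\times X$. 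Uniqueness of that retraction --- from \scmpness{} directly, as in the paper, or equivalently from trivial center via Lemma \ref{lemma:char_of_centerless} on your route --- gives $\phi=\pi_2$, and then section-compatibility reads off $\theta m=\phi\langle 1,1\rangle=\pi_2\langle 1,1\rangle=1_X$, so $\theta$ retracts $m$. Your closing claim that trivial center ``guarantees uniqueness is not an issue and that the splitting genuinely exists for $m$ itself'' inverts the logic: uniqueness is precisely the issue, and it is the mechanism that converts the splitting of $k$ into a splitting of $m$; without transporting $\tilde m$ through the isomorphism and identifying $\phi$ with $\pi_2$, your argument never touches $m$ at all.
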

 
\begin{proof} Trivially
\binjness{} $\Rightarrow$ \injness{} $\Rightarrow $\pinjness{}. 
Suppose $X$ is \scmp{}, we need to show that it
is \binj{}. Let $n: X\to B$ be a Bourn-normal monomorphism and let $r_1,r_2: R\to
B$ be the (projections of the) relation of which it is the zero class. This
means that there are morphisms $k:X\to R$ and $\tilde n: X\times X\to R$ such
that the diagram 
\begin{equation}
\label{diagram:equiv}
\vcenter{
\xymatrix@=2ex@!{ & X\ar[rr]^{\langle
0,1\rangle}\ar@{=}[dl]\ar@/^3ex/[ddl]^(0.3){n} && X\times X \ar[dl]_{\tilde
{n}} \ar@<0.5ex>[rr]^{\pi_1}\ar@/^3ex/[ddl]^(0.2){n\times n} && X
\ar[dl]_{n}\ar@<0.5ex>[ll]^{\langle 1,1\rangle}\ar@/^3ex/[ddl]^{n}\\ X
\ar[rr]^(0.7){k}\ar[d]_{n} && R \ar[d]_{\langle r_1,r_2\rangle}
\ar@<0.5ex>[rr]^(0.7){r_1} && B\ar@{=}[d] \ar@<0.5ex>[ll]^(0.3){s}\\ B
\ar[rr]^{\langle 0,1\rangle} && B,\times B \ar@<0.5ex>[rr]^{\pi_1} && B
\ar@<0.5ex>[ll]^{\langle 1,1\rangle} } 
}
\end{equation}
in which $s$ is the unique morphism
 $r_1s=r_2s=1_B$, is a morphism of split extensions. According to Lemma
\ref{lemma:split_extensions_are_products}
the middle split extension is isomorphic to a split extension of the form
\[ \xymatrix{ X \ar[r]^-{\langle 0,1\rangle} & B\times X \ar@<0.5ex>[r]^-{\pi_1}
& B \ar@<0.5ex>[l]^-{\langle 1,\theta \rangle} } \] and hence composing
the upper morphism of diagram \eqref{diagram:equiv} with this isomorphism we
obtain a morphism of split extensions of the form 
\[ \xymatrix{ X
\ar[r]^-{\langle 0,1\rangle} \ar@{=}[d]& X\times X
\ar@<0.5ex>[r]^-{\pi_1}\ar[d]_{\langle n\pi_1, \phi\rangle} & X
\ar@<0.5ex>[l]^-{\langle 1,1 \rangle} \ar[d]^{n}\\ X \ar[r]^-{\langle
0,1\rangle} & B\times X \ar@<0.5ex>[r]^-{\pi_1} & B, \ar@<0.5ex>[l]^-{\langle
1,\theta \rangle} } \] for some morphism $\phi: X\times X \to X$.
This implies that $\phi$ is a splitting of $\langle 0,1\rangle$ and
hence must be $\pi_2$ (since $X$ is \scmp{}). It now follows that
$\theta n =\pi_2 \langle 1,\theta\rangle n = 
\pi_2 \langle n\pi_1,\pi_2\rangle \langle 1,1\rangle = 1_X$ as desired.
\end{proof}
 Since in an abelian category every monomorphism is a normal
monomorphism and every protosplit monomorphism is a split monomorphism we see
that: 
\begin{proposition}
\label{proposition:abelian_seperates_completness}
Let $\C$ be an abelian category.  
\begin{enumerate}[(i)]
\item Every object $X$ satisfies the condition: if 
$\kappa : X \to A$ is the kernel of a split epimorphism
which is also a normal epimorphism, then $\kappa$ is a
split monomorphism;
\item Every object is \pinj;
\item An object is \inj{} if and only if it is (regular) injective;
\item An object
is \scmp{} if and only if it is a zero object.  
\end{enumerate}
More generally (i), (iii) and (iv) hold if $\C^{\text{op}}$ is semi-abelian. 
\end{proposition}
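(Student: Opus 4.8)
The plan is to settle the four abelian assertions from the two facts recalled just before the proposition---that in an abelian category every monomorphism is normal and every protosplit monomorphism splits---together with additivity, and then to obtain the ``more generally'' clause by transporting these arguments across the duality into the semi-abelian category $\C^{\mathrm{op}}$. Statements (ii) and (i) are essentially immediate: a protosplit monomorphism is by definition the kernel of a split epimorphism, so the recalled fact that every protosplit monomorphism splits is exactly the assertion that every object is \pinj{}, which is (ii); and in (i) a split epimorphism in an abelian category is automatically a normal epimorphism, so the hypothesis merely says that $\kappa$ is protosplit, whence it splits.

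For (iii) I would use that every monomorphism is normal to rephrase ``$X$ is \inj{}'' as ``every monomorphism with domain $X$ splits'', and then match this with (regular) injectivity. If $X$ is injective, applying injectivity to a monomorphism $m:X\to Y$ and $1_X$ gives a retraction of $m$. Conversely---the step carrying the most content---given a monomorphism $u:A\to B$ and a map $f:A\to X$, I would form the pushout of $u$ along $f$ and use that pushouts of monomorphisms are monomorphisms in an abelian category: the induced map $h:X\to P$ is then a monomorphism, hence split by hypothesis, and if $g:B\to P$ and $r:P\to X$ denote the pushout coprojection and a retraction of $h$, then $rg$ extends $f$ along $u$ since $rgu=rhf=f$.

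For (iv) I would combine (ii) with Proposition \ref{proposition:proto-complete+centerless=strong-complete}: since every object is \pinj{}, an object is \scmp{} exactly when it has trivial center. In an additive category any two morphisms commute (the map $f\pi_1+g\pi_2:A\times X\to X$ witnesses commutation of $f$ and $g$), so $1_X$ commutes with $1_X$; hence the center $z_X:Z(X)\to X$ is a split epimorphism, and being a monomorphism it is an isomorphism, so $Z(X)=X$. Thus $Z(X)=0$ forces $X=0$, while $X=0$ plainly has trivial center. (Alternatively, for \pinj{} $X$ Lemma \ref{lemma:split_extensions_are_products} presents every protosplit monomorphism as some $\langle 0,1\rangle:X\to B\times X$, whose splittings are the maps $a\pi_1+\pi_2$ with $a$ arbitrary, so uniqueness forces $\mathrm{Hom}(B,X)=0$ for all $B$, i.e.\ $X=0$.)

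Finally, for the clause that (i), (iii) and (iv) persist when $D:=\C^{\mathrm{op}}$ is semi-abelian, I would translate each notion across the duality and argue inside $D$. A protosplit monomorphism in $\C$ is a cokernel of a split monomorphism in $D$, and the hypothesis ``normal epimorphism'' in (i) becomes ``normal monomorphism'' in $D$; as normal monomorphisms are Bourn-normal, Lemma \ref{lemma:split_bourn-normal=product_inclusion} exhibits this split normal monomorphism as a product inclusion, whose cokernel is the (split) second product projection, and dualizing back gives (i). For (iii), normal monomorphisms in $\C$ coincide with regular monomorphisms (dual to regular epi $=$ normal epi in $D$) and pushouts of regular monomorphisms are regular monomorphisms (dual to pullback-stability of regular epimorphisms in $D$), so the pushout argument above runs verbatim with ``monomorphism'' replaced by ``regular monomorphism'', identifying \inj{} with regular injectivity. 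For (iv), the protosplit monomorphism dual to the second projection $\pi_2:X\times X\to X$ of $D$ (the cokernel of $\langle 1,0\rangle$) has sections $\langle g,1\rangle$, so a unique splitting forces $\mathrm{End}(X)=0$ and $X=0$. The main obstacle I expect is the bookkeeping in this dual case: checking that the ``normal epimorphism'' hypothesis in (i) is precisely what is needed to invoke Lemma \ref{lemma:split_bourn-normal=product_inclusion}---it is automatic abelianly but essential when $D$ is semi-abelian, since there split monomorphisms need not be normal---and that the closure property used in (iii) is genuinely available as the dual of pullback-stability of regular epimorphisms.
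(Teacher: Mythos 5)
Your proposal is correct, and its global strategy is the same as the paper's: settle the abelian assertions from the two recalled facts, and obtain the ``more generally'' clause by dualizing into the semi-abelian category $\C^{\text{op}}$. The local arguments differ only mildly, and it is worth recording where. For (i) in the dual setting, the paper applies the short five lemma to $\langle \alpha,\lambda\rangle : A\to B\times X$, where $\lambda$ is a retraction of the normal split monomorphism and $\alpha$ its cokernel, concluding at once that $\alpha$ is a split epimorphism; you instead invoke Lemma \ref{lemma:split_bourn-normal=product_inclusion} and then observe that the cokernel of a product inclusion is the (split) other projection --- this is the same protomodularity content, merely packaged as a lemma, and your extra step (that $\pi_2$ is the cokernel of $\langle 1,0\rangle$, by joint epimorphicity of the product injections) is sound in the unital context. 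For (iii), the paper simply cites the standard fact that in a regular category an object is regular projective if and only if every regular epimorphism onto it splits; your pushout argument is exactly the dual of that standard proof, spelled out, and you correctly identify its crux, namely pushout-stability of regular (equivalently normal) monomorphisms in $\C$, dual to regularity of $\C^{\text{op}}$. For (iv), your unique-section argument at $\pi_2$ forcing $\mathrm{End}(X)=0$ is the paper's argument verbatim (the paper uses $\pi_1$ and the two sections $\langle 1,1\rangle$ and $\langle 1,0\rangle$). The one genuinely different ingredient is your abelian-case proof of (iv) via Proposition \ref{proposition:proto-complete+centerless=strong-complete} together with the observation that in an additive category the center of $X$ is all of $X$; this is a clean alternative, but it (like your direct abelian proofs of (i)--(iii)) is strictly speaking redundant: as the paper's organization exploits, the abelian case of (i), (iii) and (iv) is already subsumed by the dual semi-abelian case, since $\C$ abelian implies $\C^{\text{op}}$ abelian, hence semi-abelian, leaving only the remark that (i) and (ii) are equivalent over an abelian base.
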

\begin{proof}
Noting that (i) and (ii) are equivalent for an abelian category.
We prove the dual of (i), (iii) and (iv) for $\C$ a semi-abelian category.
To prove the dual of (i) suppose that $\alpha : A\to B$ is the cokernel of a
normal split monomorphism $\kappa : X\to A$.
If $\lambda$ is a splitting of $\kappa$, then the diagram
\[ \xymatrix{ X \ar[r]^{\kappa} & A
\ar[r]^{\alpha}\ar[d]^{\langle \alpha,\lambda\rangle} & B\\
X \ar[r]_-{\langle 0,1\rangle}\ar@{=}[u] & B\times X
\ar[r]_-{\pi_1} & B \ar@{=}[u]\\ } \]
is a morphism of short exact sequences and hence by the short
five lemma $\langle \alpha,\lambda \rangle$ is an isomorphism and
$\alpha$ a split epimorphism. The dual of (iii) is a standard
fact about regular projective objects in regular categories. The dual of (iv)
follows easily from the fact
that if $X$ is
an object in $\C$, then $\langle 1,1\rangle$ and $\langle 1,0\rangle$
are both sections of the normal epimorphism $\pi_1$, and
$\langle 1,1\rangle=\langle 1,0\rangle$
if and only if $1_X=0$ if and only if $X$ is a zero object. 
\end{proof}
 On the other hand in the category $\Rng$ of not necessarily
unitary rings, \pinjness{}, \injness{} and \scmpness{} coincide and are
equivalent to being unitary. Recall that as mentioned in the introduction
it is known that \injness{} is equivalent to being unitary.
\begin{proposition} Let $X$ be an object in
$\Rng$. The object $X$ is a unitary if and only if it satisfies any of
Definition 
\ref{definition:completeness}
(i)-(iv).  
\end{proposition}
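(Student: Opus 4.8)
The plan is to prove the equivalence by first recalling the known result — stated explicitly in the introduction and attributed to Gardner — that an object $X$ in $\Rng$ is \inj{} if and only if $X$ is unitary. Granting this, it suffices to establish the chain of implications $\scmpness{} \Rightarrow \injness{} \Rightarrow \pinjness{}$ together with $\pinjness{} \Rightarrow \injness{}$ (so that \pinjness{}, \injness{} and \scmpness{} all coincide), and then to check that an \inj{} ring is \scmp{}. Since $\Rng$ is semi-abelian, Proposition \ref{proposition:implications_between_completeness} already gives $\scmpness{} \Rightarrow \binjness{} \Rightarrow \injness{} \Rightarrow \pinjness{}$, so the real content is the reverse direction: I would show that a \pinj{} ring is unitary, and that a unitary ring is \scmp{}.

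First I would handle $\pinjness{} \Rightarrow$ unitary. The key observation is that for a (non-unital) ring $X$ there is a canonical protosplit monomorphism exhibiting $X$ as an ideal of a unital ring: the standard Dorroh-type embedding $\kappa : X \to X \rtimes \mathbb{Z}$ (or $X^1 = \mathbb{Z} \oplus X$ with twisted multiplication), which realizes $X$ as the kernel of the split epimorphism $X^1 \to \mathbb{Z}$. If $X$ is \pinj{}, this protosplit mono must be a split monomorphism in $\Rng$, i.e.\ there is a ring homomorphism $\lambda : X^1 \to X$ with $\lambda\kappa = 1_X$. I would then argue that the image $\lambda(1)$ of the multiplicative identity of $X^1$ serves as a two-sided identity for $X$: for $x \in X$ one computes $\lambda(1)\cdot x = \lambda(1\cdot \kappa(x)) = \lambda(\kappa(x)) = x$ and symmetrically on the right, using that $\lambda$ is a ring homomorphism and $\kappa$ the ideal inclusion. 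Hence $X$ is unitary.

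Next, unitary $\Rightarrow$ \scmp{}. Here I would verify Definition \ref{definition:completeness}(iv) directly: given any split extension with kernel $X$, i.e.\ $X$ realized as the kernel of a split epimorphism $\alpha : A \to B$ with section $\beta$, I must produce a \emph{unique} retraction $\lambda : A \to X$ of the inclusion $\kappa$. When $X$ has an identity $e$, the element $\kappa(e)$ is a central idempotent of $A$ (centrality and idempotency transfer along the ideal inclusion), and the assignment $a \mapsto \kappa^{-1}(\kappa(e)\, a\, \kappa(e))$ — equivalently, multiplication by the idempotent — gives the canonical splitting. Uniqueness follows because any ring retraction $\lambda$ must send $\kappa(e)$ to $e$, which forces $\lambda(a) = \lambda(\kappa(e) a \kappa(e)) = e\,\lambda(a)\,e = \lambda(a)$ and pins down $\lambda$ on the ideal generated by the idempotent; since $\kappa(e)$ acts as a two-sided identity on the ideal $X \triangleleft A$, the retraction is completely determined.

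The main obstacle I anticipate is the \scmp{} direction: establishing \emph{uniqueness} of the splitting, not merely its existence. For \injness{} one only needs a single retraction, which the central idempotent $\kappa(e)$ readily supplies; but \scmpness{} demands that no two distinct ring homomorphisms $A \to X$ can both restrict to the identity on $X$. The cleanest route is the idempotent argument above: show $\kappa(e)$ is central in $A$, so that $A$ decomposes (as a ring) compatibly with the product decomposition forced by Lemma \ref{lemma:split_bourn-normal=product_inclusion}, and then observe that Lemma \ref{lemma:char_of_centerless} together with Proposition \ref{proposition:proto-complete+centerless=strong-complete} reduces \scmpness{} to \pinjness{} plus triviality of the center. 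A unital ring has trivial center in the relevant categorical sense precisely because $\langle 1,0\rangle : X \to X \times Y$ admitting a unique section is equivalent to $Z(X)=0$ in a pointed protomodular category; I would therefore finish by invoking Proposition \ref{proposition:proto-complete+centerless=strong-complete}, showing a unitary ring is \pinj{} (done above) and centerless, whence \scmp{}, closing the loop.
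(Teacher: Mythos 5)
Your proposal is correct and takes essentially the same route as the paper: the Dorroh-type split extension $X \to \mathbb{Z}\ltimes X \to \mathbb{Z}$ shows that \pinjness{} forces a multiplicative identity, and multiplication by that identity (the paper's $y\mapsto ey$, your central idempotent $\kappa(e)$) splits any ideal inclusion, with the check $eae=ea$ being exactly the paper's computation $eye=ey$. The only difference is cosmetic: where you prove uniqueness of the retraction directly from the idempotent, the paper instead observes that a unitary ring has trivial center (since $ex=x$) and invokes Proposition \ref{proposition:proto-complete+centerless=strong-complete} --- the route you also sketch in your closing paragraph.
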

\begin{proof} Let $X$ be a ring. If $X$ is \pinj{}, then we see that the
morphism $k$, which forms part of the split extension 
\[ \xymatrix{ X
\ar[r]^-{k} & \mathbb{Z} \ltimes X \ar@<0.5ex>[r]^-{p_1} & \mathbb{Z}
\ar@<0.5ex>[l]^-{i} } \] obtained by ``adding a multiplicative identity to
$X$'', splits. Since a surjective ring homomorphism sends a multiplicative
identity to a multiplicative identity it follows that $X$ is unitary. Conversely
if $X$ is unitary and $X$ is an ideal of $Y$, then the map $l: Y\to X$ defined
$y\mapsto ey$ (where $e$ is the multiplicative identity in $X$) is a morphism
which splits the inclusion. Indeed, using that $e$ is a multiplicative identity
it immediately follows that it splits the inclusion, and since multiplication
by an element is always an abelian group homomorphism one only needs to check
that $l$ preserves multiplication. However since for any $y \in Y$, $ey$ is in
$X$ and hence $eye=ey$ it follows that $l(y_1y_2)=
ey_1y_2=ey_1ey_2=l(y_1)l(y_2)$. Finally note that $X$ has trivial center since
$ex=x$ for all $x \in X$.  
\end{proof}
 
\begin{remark} The previous proof can be
easily adapted to prove that \pinjness{}, \injness{} and \scmpness{}
coincide and
are equivalent to being unitary for the category of algebras over an
arbitrary ring.
It is also easy to show that a morphism $f: X\to Y$
in $\Rng$ is a unitary ring morphism if and only if it is \scmp{} in the
category of morphisms of $\Rng$. In this way one can recover the category of
unitary rings from the category of rings.\end{remark}

\begin{remark} The Propositions
\ref{proposition:implications_between_completeness}
and
\ref{proposition:abelian_seperates_completness}
show that in
general for $\mathcal{SC}$, $\mathcal{C}$ and $\mathcal{PC}$ the classes of
\scmp{}, \inj{} and \pinj{} objects in a semi-abelian category the inclusions
$\mathcal{SC} \subset \mathcal{C} \subset \mathcal{PC}$ are strict.  However we
will recall the above mentioned fact that a group is \scmp{} if and only if it
is what we called \inj{}, and we will show that a Lie algebra over a commutative
ring is \scmp{} if and only if it is \inj{} if and only if it is \pinj{}. We
will also show that there are  groups, in particular the group $\z{2}$ is such
an example, that are \pinj{} but not \inj{}.  
\end{remark}
%
\begin{proposition}
\label{proposition:product_factor_of_complete}
Let $\C$ be a pointed protomodular category, let $S$ and $T$ be objects in $\C$, and let
$X=S\times T$. If $X$ is \pinj{}, \inj{}, \binj{} or \scmp{}, then so are both $S$ and
$T$.  
\end{proposition}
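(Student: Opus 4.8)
The plan is to reduce the statement for a factor to the one for the product by forming the product with $T$, and since the product is symmetric it suffices to treat $S$. The key observation is that for any monomorphism $\kappa\colon S\to A$ the morphism $\kappa\times 1_T\colon S\times T\to A\times T$ is the pullback of $\kappa$ along $\pi_1\colon A\times T\to A$, and that passing from $\kappa$ to $\kappa\times 1_T$ preserves each of the three kinds of monomorphism in play. Once this is established, the common strategy is: given a monomorphism $\kappa\colon S\to A$ of the relevant type, extend it to $\kappa\times 1_T$ of the same type, use \injness{} of $X=S\times T$ to obtain a retraction $\lambda\colon A\times T\to S\times T$ of $\kappa\times 1_T$, and then restrict $\lambda$ back down to $S$.

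For the restriction step I would set $\mu=\pi_1\lambda\langle 1_A,0\rangle\colon A\to S$ and check $\mu\kappa=1_S$ using the identity $\langle 1_A,0\rangle\kappa=(\kappa\times 1_T)\langle 1_S,0\rangle$, which gives $\mu\kappa=\pi_1\lambda(\kappa\times 1_T)\langle 1_S,0\rangle=\pi_1\langle 1_S,0\rangle=1_S$. The type-preservation facts for \pinjness{} and \injness{} are short direct computations with the universal property of kernels: if $\kappa=\ker\alpha$ with $\alpha\beta=1$, then $\kappa\times 1_T=\ker(\alpha\pi_1)$ and $\alpha\pi_1$ is split by $\langle\beta,0\rangle$, so $\kappa\times 1_T$ is again protosplit; and if $\kappa=\ker f$ is merely normal, then $\kappa\times 1_T=\ker(f\pi_1)$ is normal. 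This settles the \pinj{} and \inj{} cases.

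For the \binj{} case I would prove that Bourn-normal monomorphisms are preserved as well. Given $\kappa$ Bourn-normal to an equivalence relation $(R,r_1,r_2)$ via a morphism $k\colon S\to R$ making the lower-left square of \eqref{diagram:bourn_normal} a pullback, I would take on $A\times T$ the equivalence relation $R\times\nabla_T$, where $\nabla_T$ is the indiscrete relation on $T$; its zero class is exactly $S\times T$ (this is why one must use $\nabla_T$ rather than the discrete relation). Under the canonical isomorphism $(A\times T)\times(A\times T)\cong(A\times A)\times(T\times T)$, the Bourn-normality square for $\kappa\times 1_T$ and $R\times\nabla_T$, with comparison map $k\times\langle 0,1\rangle$, decomposes as the product of the (hypothetically pullback) Bourn-normality square for $\kappa$ and $R$ with the square for $T$ whose right edge is $1_{T\times T}$, which is trivially a pullback. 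Since products of pullbacks are pullbacks, $\kappa\times 1_T$ is Bourn-normal, and the extend--split--restrict argument then shows $S$ is \binj{}.

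The \scmp{} case I would not handle by transferring uniqueness of sections directly, but by invoking Proposition \ref{proposition:proto-complete+centerless=strong-complete}: if $X$ is \scmp{} then $X$ is \pinj{} with trivial center; the \pinj{} case already gives that $S$ is \pinj{}, while Corollary \ref{corollary:factors_of_a_product_centerless_<=>_product_centerless} gives that $S$ has trivial center, so $S$ is \scmp{} again by Proposition \ref{proposition:proto-complete+centerless=strong-complete}. I expect the main obstacle to be the Bourn-normal case: pinning down the correct equivalence relation $R\times\nabla_T$ so that its zero class is $S\times T$, and verifying the pullback condition cleanly through the product-of-squares decomposition. The other cases are routine kernel computations together with the single restriction formula for $\mu$.
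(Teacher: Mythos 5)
Your proof is correct and takes essentially the same route as the paper's: show that $-\times 1_T$ sends protosplit/normal/Bourn-normal monomorphisms with domain $S$ to monomorphisms of the same type with domain $X=S\times T$, use the completeness hypothesis on $X$ to split $\kappa\times 1_T$, restrict the retraction back down to $S$ via $\langle 1,0\rangle$, and settle the \scmp{} case by combining Proposition \ref{proposition:proto-complete+centerless=strong-complete} with Corollary \ref{corollary:factors_of_a_product_centerless_<=>_product_centerless}. The only difference is one of detail: the paper merely asserts the type-preservation facts, whereas you verify them (including the equivalence relation $R\times\nabla_T$ for the Bourn-normal case), and your explicit retraction $\mu=\pi_1\lambda\langle 1_A,0\rangle$ is exactly what the paper's ``composite of split monomorphisms'' argument unwinds to.
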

 
\begin{proof} For the cases where $X$ is \pinj{},
\inj{} or \binj{} note that if $n:S\to Y$ is a protosplit/normal/Bourn-normal
monomorphism, then $n\times 1_T$ is too. This means that in each case $n\times
1_T$ is a split monomorphism. Since the diagram 
\[ \xymatrix{ S\ar[d]_{\langle
1,0\rangle} \ar[r]^{n} & Y \ar[d]^{\langle 1,0\rangle}\\ S\times T
\ar[r]_{n\times 1_T} & Y\times T } \] commutes and the composite of two split
monomorphisms is a split monomorphism it follows that $\langle 1,0\rangle n =
(n\times 1_T) \langle 1,0\rangle$ is a split monomorphism and hence $n$ is
too. The claim now follows from Corollary 
\ref{corollary:factors_of_a_product_centerless_<=>_product_centerless}
and Proposition
\ref{proposition:proto-complete+centerless=strong-complete}
.  
\end{proof}
The converse of the implications of the previous proposition
don't hold in general, but do hold in certain categories.  
\begin{example} The product of \scmp{}
objects need not be \pinj{}. If $X$ is a non-zero \scmp{} group, then 
$X\times X$ has trivial center and hence is \scmp{} if and only if it is
\pinj{}. However the
automorphism $X\times X \to X\times X$ defined by $(x,y) \mapsto (y,x)$ is
 clearly not inner (which is sufficient via Baer's theorem). On the other hand since the product of unitary rings is
unitary we see that product of \scmp{} objects in the category of rings is
always \scmp{}. It is also easy to check that the converse of the
implications of the previous proposition do hold in each abelian category.  
\end{example}
Recall that every unital variety of universal algebras has centers of objects.
Recall also, that in a unital category $\C$, central subobjects are normal
whenever $\C$
satisfies the condition that for each composite $f=\alpha n$ where $\alpha$
is
a split epimorphism and $n$ is a normal monomorphism $f$ is a normal
monomorphism as soon as it is a monomorphism. In particular every semi-abelian
variety of universal algebras and every semi-abelian algebraically cartesian
closed category satisfies these properties. Furthermore, we have (the probably
known fact which we couldn't find a reference for): 
\begin{proposition} Let $\C$
be a pointed protomodular category. If $m:S\to X$ is a central monomorphism in
$\C$, then $m$ is Bourn-normal.  
\end{proposition}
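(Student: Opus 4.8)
The plan is to convert the centrality datum into an explicit equivalence relation on $X$ and then invoke the criterion recorded just after diagram \eqref{diagram:bourn_normal}. Since a pointed protomodular category with finite limits is unital, the hypothesis that $m:S\to X$ is central supplies a (unique) morphism $\varphi:S\times X\to X$ with $\varphi\langle 1,0\rangle = m$ and $\varphi\langle 0,1\rangle = 1_X$. I would use $\varphi$ to build the reflexive graph $R:=S\times X$ with $r_1:=\pi_2$, $r_2:=\varphi$ and reflexivity section $s:=\langle 0,1\rangle$; indeed $r_1 s = \pi_2\langle 0,1\rangle = 1_X = \varphi\langle 0,1\rangle = r_2 s$, so $s$ is a common section. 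In the category of groups this $R$ is exactly the congruence ``$y\equiv x \bmod S$'', so it is the expected candidate.

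The first genuine step is to check that $\langle r_1,r_2\rangle = \langle \pi_2,\varphi\rangle : S\times X\to X\times X$ is a monomorphism, so that $R$ is an honest relation. Here I would use that in a pointed protomodular category a morphism is a monomorphism as soon as its kernel is trivial \cite{BOURN:2000}. A morphism $t$ into $S\times X$ annihilated by $\langle \pi_2,\varphi\rangle$ satisfies $\pi_2 t = 0$, hence factors as $t=\langle 1,0\rangle t'$ through $\ker(\pi_2)$; then $0=\varphi t = \varphi\langle 1,0\rangle t' = m t'$, and since $m$ is a monomorphism $t'=0$, whence $t=0$. Thus $\ker\langle \pi_2,\varphi\rangle = 0$ and $\langle \pi_2,\varphi\rangle$ is mono.

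Having a reflexive relation, I would invoke that every pointed protomodular category is a Mal'tsev category, in which every reflexive relation is automatically an equivalence relation; hence $(R,r_1,r_2)$ is an equivalence relation on $X$. It then remains only to identify $m$ as the Bourn-normal monomorphism attached to $R$. Taking $k:=\langle 1,0\rangle : S\to S\times X$ gives the kernel of $r_1=\pi_2$, and $r_2 k = \varphi\langle 1,0\rangle = m$; by the observation recorded immediately after diagram \eqref{diagram:bourn_normal}---that for an equivalence relation $(R,r_1,r_2)$ with $k=\ker(r_1)$ the composite $r_2 k$ is Bourn-normal to $(R,r_1,r_2)$---this exhibits $m$ as Bourn-normal, completing the proof.

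The only nontrivial point, and hence the main obstacle, is the monomorphism check for $\langle \pi_2,\varphi\rangle$; even this reduces cleanly to the ``trivial kernel implies mono'' principle together with the fact that $m$ is already mono. The passage from reflexive relation to equivalence relation is handled wholesale by the Mal'tsev property, so no explicit construction of a connector or Mal'tsev operation is required.
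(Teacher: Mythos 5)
Your proposal is correct and takes essentially the same route as the paper: both extract the cooperator $\varphi$, form the reflexive relation $(S\times X,\pi_2,\varphi)$ on $X$, and use protomodularity to conclude that $\langle \pi_2,\varphi\rangle$ is a monomorphism, so that $m=\varphi\langle 1,0\rangle$ is Bourn-normal to this equivalence relation. The only cosmetic difference is that the paper gets the monomorphism by exhibiting $\langle\varphi,\pi_2\rangle$ as the middle map of a morphism of split extensions and citing protomodularity wholesale, whereas you unfold that step into the explicit trivial-kernel computation; the content is identical.
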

 
\begin{proof} Suppose $m:S\to
X$ is a central monomorphism with cooperator $\varphi: S\times X\to X$. It
follows that the diagram 
\[ \xymatrix{ S\ar[d]_{m} \ar[r]^{\langle 1,0\rangle} &
S\times X \ar@<0.5ex>[r]^{\pi_2}\ar[d]^{\langle\varphi,\pi_2\rangle} &
X\ar@<0.5ex>[l]^{\langle 0,1\rangle} \ar@{=}[d]\\ X \ar[r]_{\langle 1,0\rangle}
& X\times X \ar@<0.5ex>[r]^{\pi_2} & X\ar@<0.5ex>[l]^{\langle 1,1\rangle} } \]
is a morphism of split extensions and hence by protomodularity $\langle
\varphi,\pi_2\rangle$ is a monomorphism and $m$ a Bourn-normal monomorphism.
\end{proof}
For an object $X$ in a pointed weakly unital category consider the condition:
\begin{condition} \label{condition:nice_center}\ 
The center $z_X:Z(X)\to X$ exists, $\langle z_X,z_X\rangle : Z(X)\to X\times X$ is a normal monomorphism,
 and the cokernels of $z_X$ and $\langle z_X,z_X\rangle$ exist.
\end{condition}
Note that the above condition holds for each object in a semi-abelian category
admitting centers and hence in each semi-abelian variety
of universal algebras. On the other hand it holds for each abelian object in
a unital category.
\begin{theorem}
\label{theorem:proto-complete_product_decomposition}
Let $\C$ be a pointed protomodular category and let $X$ be an object in $\C$.
If
$X$ is \pinj{} (respectively \inj{} or \binj{}) and satisfies Condition
\ref{condition:nice_center},
then $X$ is the product of its
center $Z(X)$ which is an abelian \pinj{} (respectively \inj{} or \binj{}) object
and the quotient object $X/Z(X)$ which is a \scmp{} object.  
\end{theorem}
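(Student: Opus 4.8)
The plan is to reduce everything to a single product decomposition and then read off the refinements from the results already proved. Write $q\colon X\to X/Z(X)$ for the cokernel of the center $z_X\colon Z(X)\to X$, which exists by Condition~\ref{condition:nice_center}, and abbreviate $\bar X:=X/Z(X)$. First I would record that $z_X$, being a central monomorphism, is a Bourn-normal monomorphism (by the Proposition immediately preceding Condition~\ref{condition:nice_center}); using that $\langle z_X,z_X\rangle$ is normal and that the relevant cokernels exist, I would check that $z_X$ is in fact the kernel of $q$, so that $z_X$ is a normal monomorphism and $\bar X$ is a genuine quotient of $X$. Since by Proposition~\ref{proposition:implications_between_completeness} each of \binjness{} and \injness{} implies \pinjness{}, I may assume throughout that $X$ is at least \pinj{}; the stronger hypotheses will be needed only at the very end.

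The heart of the argument is to produce the decomposition $X\cong Z(X)\times\bar X$, for which, by Lemma~\ref{lemma:split_bourn-normal=product_inclusion}, it suffices to exhibit the Bourn-normal monomorphism $z_X$ as a \emph{split} monomorphism. Because \pinjness{} of $X$ only controls protosplit monomorphisms with domain $X$, I cannot split $z_X$ directly; instead I would build a split extension with kernel $X$ out of the cokernel $c\colon X\times X\to Q$ of $\langle z_X,z_X\rangle=\langle 1,1\rangle z_X$. The two projections of $X\times X$ descend to morphisms $\rho_1,\rho_2\colon Q\to\bar X$, the diagonal descends to a common section $\bar s\colon\bar X\to Q$ of both (since $c\langle 1,1\rangle=\bar s q$ annihilates $Z(X)$), and $\kappa:=c\langle 0,1\rangle\colon X\to Q$ is the kernel of $\rho_1$ with $\rho_2\kappa=q$. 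Thus $X\xrightarrow{\kappa}Q\xrightarrow{\rho_1}\bar X$, split by $\bar s$, is a split extension with kernel $X$, and Lemma~\ref{lemma:split_extensions_are_products} turns it into a product: there are a morphism $\theta\colon\bar X\to X$ and an isomorphism $Q\cong\bar X\times X$ under which $\kappa=\langle 0,1\rangle$, $\rho_1=\pi_1$ and $\bar s=\langle 1,\theta\rangle$.

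The main obstacle is to extract from this the splitting of $q$, i.e.\ to prove $q\theta=1_{\bar X}$. Transporting $\rho_2$ across the isomorphism gives a morphism $g\colon\bar X\times X\to\bar X$ with $g\langle 0,1\rangle=q$ and $g\langle 1,\theta\rangle=1_{\bar X}$; the mere existence of $g$ on the product forces its restriction $u:=g\langle 1,0\rangle$ to Huq-commute with $q$, and since $q$ is a pullback-stable regular epimorphism, Lemma~\ref{lemma:standard_properties_of_commutes_relation} places the image of $u$ inside the center of $\bar X$, with $u$ measuring exactly the failure of $q\theta$ to be the identity. Here is precisely where \pinjness{} must be used once more to show that $\bar X$ has trivial center, whence $u=0$ and $q\theta=1_{\bar X}$. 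I expect this step---ruling out a nontrivial ``second center'' of $X$---to be the crux, and I would prove it by feeding the putative central part of $\bar X$ back into a split extension with kernel $X$ and contradicting the product conclusion of Lemma~\ref{lemma:split_extensions_are_products}. Once $q$ is split, $z_X$ is a split Bourn-normal monomorphism, so Lemma~\ref{lemma:split_bourn-normal=product_inclusion} yields $X\cong Z(X)\times\bar X$.

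It then remains to harvest the refinements, all of which are now routine. The center of any object is an abelian object, so $Z(X)$ is abelian. Applying Proposition~\ref{proposition:center_of_products} to $X\cong Z(X)\times\bar X$ identifies $Z(X)$ with $Z(Z(X))\times Z(\bar X)=Z(X)\times Z(\bar X)$, forcing $Z(\bar X)=0$ (equivalently, by Corollary~\ref{corollary:factors_of_a_product_centerless_<=>_product_centerless}, $\bar X$ has trivial center). By Proposition~\ref{proposition:product_factor_of_complete} both factors inherit from $X$ whichever of \pinjness{}, \injness{}, \binjness{} was assumed; in particular $Z(X)$ is an abelian \pinj{} (respectively \inj{} or \binj{}) object and $\bar X$ is \pinj{}. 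Finally, since $\bar X$ is \pinj{} with trivial center, Proposition~\ref{proposition:proto-complete+centerless=strong-complete} shows $\bar X$ is \scmp{}, completing all three cases simultaneously.
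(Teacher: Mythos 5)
Your setup is exactly the paper's: you form the quotient split extension $X\xrightarrow{\kappa}Q\rightrightarrows\bar X$ from the cokernels of $z_X$ and $\langle z_X,z_X\rangle$, apply Lemma~\ref{lemma:split_extensions_are_products} to identify $Q\cong\bar X\times X$ with section $\langle 1,\theta\rangle$, reduce everything to proving $q\theta=1_{\bar X}$, and your harvesting of the refinements at the end (via Propositions~\ref{proposition:center_of_products}, \ref{proposition:product_factor_of_complete} and \ref{proposition:proto-complete+centerless=strong-complete}) is also the paper's. The problem is the step you yourself flag as the crux, and it is a genuine gap rather than a routine verification. Your plan pushes the cooperator $g$ down to $\bar X$, which requires two things: (i) that $u=g\langle 1,0\rangle$ is central in $\bar X$, which via the converse of Lemma~\ref{lemma:standard_properties_of_commutes_relation}(ii) needs $q$ to be a pullback-stable regular epimorphism --- an assumption that is simply not available, since the theorem is stated for an arbitrary pointed protomodular category with no regularity or exactness hypotheses (and the paper's proof never uses it); and (ii) that $\bar X$ has trivial center, which you do not prove. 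Point (ii) is not a detail: triviality of the center of $X/Z(X)$ is false for a general object (take any group of nilpotency class two, e.g.\ the dihedral group of order $8$), so any proof of it must use \pinjness{} of $X$ essentially; moreover, in the paper's logic it is a \emph{consequence} of the product decomposition, not an ingredient of it, and your one-line idea (``feed the putative central part of $\bar X$ back into a split extension with kernel $X$'') is never carried out --- it would even presuppose that $\bar X$ has a center, which Condition~\ref{condition:nice_center} guarantees only for $X$.

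What the paper does instead, and what your argument is missing, is to keep the commutation upstairs in $X$ rather than descending to $\bar X$. Writing $\varphi:X\times X\to\bar X\times X$ for the cokernel of $\langle z_X,z_X\rangle$ composed with your isomorphism $Q\cong\bar X\times X$, one has $\pi_2\varphi\langle 0,1\rangle=1_X$, so $\pi_2\varphi$ is itself a cooperator exhibiting $\pi_2\varphi\langle 1,0\rangle:X\to X$ as a central morphism of $X$. Centrality in $X$ interacts directly with $q$: the central morphism factors through $z_X$, hence is killed by $q$, so $q\pi_2\varphi\langle 1,0\rangle=0=q\pi_2\langle 1,0\rangle$; since $\langle 1,0\rangle$ and $\langle 0,1\rangle$ are jointly epimorphic this gives $q\pi_2\varphi=q\pi_2$, and evaluating at $\langle 1,1\rangle$ (using $\varphi\langle 1,1\rangle=\langle 1,\theta\rangle q$) yields $q\theta q=q$, whence $q\theta=1_{\bar X}$ because $q$ is epic. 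This uses only unitality and the universal property of the cokernel, and it is precisely the step that replaces your items (i) and (ii). With that substitution the remainder of your argument goes through as written.
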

\begin{proof} Let $X$ be a \pinj{} object in $\C$ and let $(Z,z)$ be the center
of $X$. 
Consider the diagram 
\[ \xymatrix{ &
Z\ar@{=}[r]\ar[d]_{\langle z,z\rangle} & Z \ar[d]^{z}\\ X \ar[r]^-{\langle
0,1\rangle}\ar@{=}[d] & X\times X \ar@<0.5ex>[r]^-{\pi_1}\ar[d]^{\varphi} & X
\ar@<0.5ex>[l]^-{\langle 1, 1\rangle}\ar[d]^{c}\\ X \ar[r]_-{\langle 0
,1\rangle} & B\times X \ar@<0.5ex>[r]^-{\pi_1} & B\ar@<0.5ex>[l]^-{\langle
1,\theta\rangle}
}
\]
in which $c$ and $\varphi$
are cokernels of $z$ and $\langle z,z\rangle$, respectively, and the lower
induced split extension is of the form presented by Lemma
\ref{lemma:split_extensions_are_products}
. This means that the diagram 
\[
\xymatrix{
X \ar[r]^-{\langle 1,0\rangle}\ar[dr]& X\times X
\ar[d]^{\pi_2\varphi} & X \ar[l]_-{\langle 0,1\rangle}\ar[dl]^{1_X}\\ & X & 
} 
\]
commutes and so the morphism $\pi_2\varphi \langle 1,0\rangle$ is central.
It follows there exists $\bar f: X\to Z$ such that
$z\bar f =\pi_2\varphi\langle 1,0\rangle$ and hence
$c\pi_2\varphi \langle 1,0\rangle = 0$. Therefore, since
the morphisms $\langle 1,0\rangle $ and $\langle 0,1\rangle$ are jointly
epimorphic it follows that $c\pi_2\varphi = c\pi_2$ and hence that $c\theta c =
c \pi_2 \langle 1,\theta\rangle c = c\pi_2\varphi \langle 1,1\rangle = c\pi_2
\langle 1,1\rangle =c$. This means that $c$ is a split epimorphism which means
that its kernel being central is a \emph{product inclusion}. Indeed, if $\gamma
: B\times Z\to X$ is the cooperator of $z$ and $\theta$, then since $\langle
1,0\rangle$ and $\langle 0,1\rangle$ are jointly epimorphic it follows that the
diagram
\[
\xymatrix{
Z \ar@{=}[d]\ar[r]^-{\langle 0,1\rangle} & B\times Z
\ar@<0.5ex>[r]^-{\pi_1}\ar[d]^{\gamma} & B\ar@<0.5ex>[l]^-{\langle
1,0\rangle}\ar@{=}[d]\\ Z \ar[r]_{z} & X \ar@<0.5ex>[r]^{c} &
B\ar@<0.5ex>[l]^{\theta}
}
\]
is a morphism of split extensions and $\gamma$ is
an isomorphism. 
Since $\langle 0,1\rangle:Z \to B\times Z$ is the center of $B\times Z$ it
follows by Proposition
\ref{proposition:center_of_products}
 that $B$ has trivial center. The claim now follows from Propositions 
\ref{proposition:proto-complete+centerless=strong-complete},
\ref{proposition:implications_between_completeness}
and
\ref{proposition:product_factor_of_complete}
.
\end{proof}
 The previous
theorem raises the question of whether the product of a \pinj{} abelian object
and a \scmp{} object is necessarily \pinj. This turns out to be false in
general. Let $S_3$ be the symmetric group on a three element set. It is
well-known (and easy to check) that $S_3$ is \inj{}=\scmp{}.  If the group
$X=\z{2}\times S_3$ is \pinj{}, then by the previous theorem since $\langle
1,0\rangle: \z{2} \to X$ is the center of $X$ we must have the $\Aut(X) \cong
S_3\cong \Aut(S_3)$ and so every automorphism of $X$ must be of the form
$1\times \phi$ where $\phi \in \Aut(S_3)$. However, recalling that the group
$S_3$ is isomorphic to the semi-direct product $\z{3} \ltimes_\theta \z{2}$
where $\theta: \z{2} \to \Aut(\z{3})$ is (the isomorphism) defined by
$1 \mapsto (x \mapsto 2\cdot
x)$ and hence forms part of a split extension 
\[ \xymatrix{ \z{3} \ar[r]^{k} &
S_3 \ar@<0.5ex>[r]^{p} & \z{2}\ar@<0.5ex>[l]^{i} } \] and writing $a:\z{2}\times
\z{2} \to \z{2}$ for the addition morphism of $\z{2}$, we see that the morphism
$\langle a(1_{\z{2}} \times p),\pi_2\rangle : X\to X$ is an automorphism which
is not of this form. On the other hand we do have:
\begin{proposition}
\label{proposition:product_ab_binj_scmp}
Let $\C$ be a pointed protomodular category in which centralizers of
Bourn-normal
monomorphisms exists and are Bourn-normal. Suppose $A$ is an abelian and
\binj{}
object of $\C$, and $B$ is a \scmp{} object of $\C$.
If $\hom(B,A)=\{0\}$,
 then $B\times A$ is \binj{}.
\end{proposition}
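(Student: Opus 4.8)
The plan is to take an arbitrary Bourn-normal monomorphism $n : B\times A \to Y$ and to split it by treating the two factors separately: first splitting off the central factor $A$ using that $A$ is \binj{}, then splitting off $B$ using that $B$ is \scmp{}, and finally assembling the two retractions, where the hypothesis $\hom(B,A)=\{0\}$ is exactly what makes them fit together. The starting observation is that, since $B$ is \scmp{} it has trivial center (Proposition \ref{proposition:proto-complete+centerless=strong-complete}) while $A$ is abelian, so by Proposition \ref{proposition:center_of_products} the center of $B\times A$ is the product inclusion $\langle 0,1\rangle : A\to B\times A$. Thus $A$ sits inside $B\times A$ as its center and is in particular central; I will write $\iota_A=\langle 0,1\rangle$ and $\iota_B=\langle 1,0\rangle$.

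To split off $A$, I would consider the centralizer $z : C\to Y$ of $n$, which exists and is Bourn-normal by hypothesis. Since $\iota_A$ is central it commutes with $1_{B\times A}$, so by Lemma \ref{lemma:standard_properties_of_commutes_relation} the morphism $n\iota_A$ commutes with $n$ and hence factors through $z$. A short computation identifies $n\iota_A$ with the intersection of $n$ and $z$ in the subobject lattice of $Y$: any subobject of $B\times A$ factoring through $C$ must commute with all of $B\times A$ and so lie in its center $A$, and conversely $A$ does so. As the intersection of two Bourn-normal monomorphisms, $n\iota_A$ is again Bourn-normal, so since $A$ is \binj{} it is a split monomorphism, and by Lemma \ref{lemma:split_bourn-normal=product_inclusion} a product inclusion: there is an isomorphism $Y\cong A\times Y'$ under which $n\iota_A$ becomes $\langle 1,0\rangle$, together with a retraction $\pi_A : Y\to A$.

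With $Y\cong A\times Y'$ fixed, the hypothesis $\hom(B,A)=\{0\}$ forces $\pi_A n\iota_B=0$, so $n\iota_B$ lands in $Y'$ and determines a morphism $n':B\to Y'$ with $\pi_{Y'}n\iota_B=n'$. Joint epimorphy of $\iota_A$ and $\iota_B$ (the category being protomodular, hence unital) then gives $\pi_A n=\pi_2$ and $\pi_{Y'}n=n'\pi_1$, so that $n=\langle \pi_2, n'\pi_1\rangle$; moreover $n'$ is a monomorphism, since $n\iota_B u=n\iota_B v$ is equivalent to $n'u=n'v$. Granting that $n'$ is a split monomorphism with retraction $q':Y'\to B$, one checks directly that $\lambda=\langle q'\pi_{Y'},\pi_A\rangle : A\times Y'\to B\times A$ satisfies $\lambda n = \langle q'n'\pi_1,\pi_2\rangle = \langle \pi_1,\pi_2\rangle = 1_{B\times A}$, so $n$ splits and $B\times A$ is \binj{}.

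The main obstacle is therefore to show that $n'$ is a split monomorphism. Since $B$ is \scmp{} it is in particular \binj{} (Proposition \ref{proposition:implications_between_completeness}), so it suffices to prove that $n'$ is Bourn-normal. Writing $(R,r_1,r_2)$ for the equivalence relation to which $n$ is Bourn-normal, the task is to descend this structure along the split epimorphism $\pi_{Y'}:Y\to Y'$ to an equivalence relation on $Y'$ to which $n'$ is normal; here the comparison map exhibiting the square relating $\pi_1$ and $\pi_{Y'}$ as a pullback (via the split short five lemma) is helpful, but the essential point is that the centrality, indeed abelianness, of the kernel $A$ of $\pi_{Y'}$ must be used to see that $R$ is invariant in the $A$-direction and so passes to the quotient $Y'=Y/A$. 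I expect this descent to be the technical heart of the argument; once it is in place, the three splittings combine exactly as above to give the result.
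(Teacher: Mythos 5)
Your proposal follows the paper's own proof step for step through its first two thirds: identifying $n\iota_A$ as the meet of $n$ with its centralizer (hence Bourn-normal), splitting it off via \binjness{} of $A$ and Lemma \ref{lemma:split_bourn-normal=product_inclusion} to get $Y\cong A\times Y'$, using $\hom(B,A)=\{0\}$ and joint epimorphy to write $n=\langle \pi_2,n'\pi_1\rangle$, and reducing everything to the Bourn-normality of $n':B\to Y'$ via Proposition \ref{proposition:implications_between_completeness}. But at exactly that last step you leave a genuine gap. You propose to obtain Bourn-normality of $n'$ by pushing the equivalence relation $R$ witnessing normality of $n$ \emph{forward} along the split epimorphism $\pi_{Y'}:Y\to Y'$, and you explicitly defer this ``descent'' as the technical heart of the argument. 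That direction cannot be made to work in the stated generality: forming the direct image of $R$ under $\pi_{Y'}\times\pi_{Y'}$ requires regular images, and knowing that such an image is again an equivalence relation requires regular Mal'tsev-type hypotheses; the category here is only pointed protomodular with finite limits, so neither tool is available. The appeal to abelianness of $A$ to make $R$ ``invariant in the $A$-direction'' is also a red herring --- no property of $A$ is needed at this stage.

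The correct move, and the one the paper makes, goes in the opposite direction: pull back along the section instead of pushing forward along the retraction. The square
\[
\xymatrix{
B \ar[r]^{n'} \ar[d]_{\langle 1,0\rangle} & Y' \ar[d]^{\langle 0,1\rangle}\\
B\times A \ar[r]_-{\langle \pi_2,n'\pi_1\rangle} & A\times Y'
}
\]
is a pullback (any morphism into $B\times A$ whose composite with $\langle \pi_2,n'\pi_1\rangle$ factors through $\langle 0,1\rangle$ has zero second component, hence factors uniquely through $\langle 1,0\rangle:B\to B\times A$), and $\langle \pi_2,n'\pi_1\rangle$ is Bourn-normal, being $n$ composed with the isomorphism $Y\cong A\times Y'$. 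Since Bourn-normal monomorphisms are stable under pullback --- the inverse image along $\langle 0,1\rangle$ of the equivalence relation witnessing normality of $\langle\pi_2,n'\pi_1\rangle$ is again an equivalence relation, and this holds in any finitely complete category --- the morphism $n'$ is Bourn-normal, hence split by \binjness{} of $B$, and your assembly of the retraction $\lambda$ then finishes the proof. So the gap is repairable in one line, but as written the proof is missing precisely the idea that makes the argument go through without exactness or regularity assumptions.
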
 
\begin{proof}
Suppose $n:B\times A\to Y$ is a Bourn-normal monomorphism. Since
the pullback of the centralizer of a monomorphism $f$ along $f$ is the
center of its domain it follows that
there is a unique morphism $m$ making the diagram
\[
\xymatrix{
A\ar[r]^-{m}\ar[d]_{\langle 0,1\rangle} & Z_Y(B\times A,n)\ar[d]^{z_n}\\
B\times A\ar[r]_{n} & Y
}
\]
a pullback. This means that $n\langle 0,1\rangle$
(being the binary meet of Bourn-normal monomorphisms) is Bourn-normal and
hence by
Lemma \ref{lemma:split_bourn-normal=product_inclusion}
there is an isomorphism $\theta : Y\to C\times A$ such that
$\theta n \langle 0,1\rangle = \langle 0,1\rangle$.
Therefore, since $\hom(B,A)=\{0\}$ it follows that 
$\pi_2 \theta n \langle 1,0\rangle = 0$ and hence
that $\theta n \langle 1,0\rangle = \langle \tilde n,0\rangle$
where $\tilde n = \pi_1 \theta n \langle 1,0\rangle$.
Since $\langle 1,0\rangle$ and $\langle 0,1\rangle$ are
jointly epimorphic, it follows that
$\theta n = \tilde n \times 1$. Noting that the diagram
 \[
\xymatrix{
B \ar[r]^{\tilde n} \ar[d]_{\langle 1,0 \rangle} &
 C\ar[d]^{\langle 1,0\rangle}\\
B\times A \ar[r]_{\tilde n \times 1} & C\times A
}
\]
is a pullback, we see that $\tilde n$ is a Bourn-normal monomorphism and hence
it and $n$ are split monomorphisms.

\end{proof}
\begin{lemma}
\label{lemma:conjugation_splits}
Let $\C$ be a pointed protomodular category and let $X$ be an object
admitting a
generic split extension with kernel $X$.  The following are equivalent:
\begin{enumerate}[(a)]
\item the
generic split extension with kernel $X$ is of the form 
\[ \xymatrix{ X
\ar[r]^-{\langle 0,1\rangle} & [X]\times X \ar@<0.5ex>[r]^-{\pi_1} &
[X]\ar@<0.5ex>[l]^-{\langle 1,\theta\rangle} } \] for some morphism $\theta$;
\item $\conj_X$ is a split epimorphism.
\end{enumerate}
\end{lemma}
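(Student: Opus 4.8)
The plan is to read condition (a) as the assertion that the kernel $k$ of the generic split extension is a split monomorphism, and then to exploit the internal groupoid structure carried by the generic split extension together with the identity $\conj_X=p_2k$. First I would record that (a) is equivalent to the existence of a retraction of $k\colon X\to[X]\ltimes X$. Indeed, if $\lambda k=1_X$, then $\langle p_1,\lambda\rangle\colon [X]\ltimes X\to[X]\times X$ is a morphism of split extensions which is the identity on the kernel $X$ and on the quotient $[X]$, so by protomodularity (the split short five lemma) it is an isomorphism; taking $\theta:=\lambda i$ and reading off the image of the section gives exactly the product form of (a). Conversely (a) makes $k=\langle 0,1\rangle$ split by $\pi_2$. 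This step is routine and parallels the proof of Lemma \ref{lemma:split_extensions_are_products}.

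Recall next that $[X]\ltimes X$ is the object of arrows of the terminal $X$-groupoid, with domain $p_1$, codomain $p_2$, identities $i$, composition $m$, that $k$ is the kernel of $p_1$, and that $\conj_X=p_2k$. For the implication (b)$\Rightarrow$(a) I would convert a section $\sigma$ of $\conj_X$ into a retraction of $k$ by composing arrows. Put $\omega:=k\sigma$; then $p_1\omega=0$ and $p_2\omega=\conj_X\sigma=1_{[X]}$, so $\omega$ is a section of the codomain lying over the base point. The pair $\langle 1,\omega p_1\rangle$ factors through the object of composable pairs $([X]\ltimes X)\times_{[X]}([X]\ltimes X)$ since $p_1=(p_2\omega)p_1=p_2(\omega p_1)$, and the composite $\mu:=m\langle 1,\omega p_1\rangle$ satisfies $p_1\mu=p_1\omega p_1=0$, hence factors as $\mu=k\lambda$ through the kernel. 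Using $p_1k=0$ and the unit law $m\langle 1,ip_1\rangle=1$ one gets $\mu k=m\langle k,\,ip_1k\rangle=k$, so $k\lambda k=k$ and therefore $\lambda k=1_X$ because $k$ is monic. By the first step this proves (a).

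For the converse (a)$\Rightarrow$(b) I would pass to the associated internal action. In product form the action $\tau$ of $[X]$ on $X$ classifying the generic split extension is conjugation twisted by $\theta$; concretely, chasing the recipe $\langle 0,1\rangle\tau=[\langle 1,\theta\rangle,\langle 0,1\rangle]\kappa_{[X],X}$ through $\pi_1$ and $\pi_2$ and using that $\kappa_{[X],X}$ is the kernel of $[1,0]$ gives $\tau=[\theta,1]\kappa_{[X],X}=\gamma_X(\theta\flat 1_X)$. On the other hand the crossed-module identity for the terminal crossed module $([X],X,\tau,\conj_X)$, namely that $(\conj_X,1_X)$ is a morphism of actions, reads $\gamma_X=\tau(\conj_X\flat 1_X)$. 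Substituting and using functoriality of $\flat$ yields $\gamma_X=\gamma_X\bigl((\theta\conj_X)\flat 1_X\bigr)$, and since the conjugation action is classified by $\conj_X$ this forces $\conj_X\theta\conj_X=\conj_X$.

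The remaining step, upgrading $\conj_X\theta\conj_X=\conj_X$ to $\conj_X\theta=1_{[X]}$, is exactly the statement that $\conj_X$ is an epimorphism, and I expect this to be the main obstacle: it is equivalent to the terminal $X$-groupoid being connected, and it genuinely cannot be produced from the product form by formal groupoid manipulations (arrows out of the base point reach only the image of $\conj_X$). I would close the gap by invoking the faithfulness of the terminal action, i.e.\ that $[X]$ acts effectively on $X$, which makes the twisted presentation $\tau=\gamma_X(\theta\flat 1_X)$ agree with the tautological action of $[X]$ only when $\conj_X\theta=1_{[X]}$; equivalently, once $\conj_X$ is known to be (regularly) epic, cancelling it on the right in $\conj_X\theta\conj_X=\conj_X$ finishes the proof. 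It is this effectiveness/epimorphism input, rather than any of the diagram chases above, that I anticipate being the delicate part to pin down rigorously.
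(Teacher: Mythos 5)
Your reformulation of (a) as ``$k$ is a split monomorphism'' and your proof of (b)$\Rightarrow$(a) are correct; the latter is in fact a genuinely different route from the paper's, which instead applies the split short five lemma to the composite $\varphi(\theta\times 1)$ of the morphism $[X]\times X\to X\times X$ with the conjugation morphism \eqref{diagram:conjugation_morphism}, whereas you build a retraction of $k$ out of the groupoid composition $m$ on the terminal $X$-groupoid. That chase ($\mu=m\langle 1,\omega p_1\rangle$, $p_1\mu=0$, $\mu k=k$) is sound.

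The genuine gap is in (a)$\Rightarrow$(b). You arrive at $\conj_X\theta\conj_X=\conj_X$ and then propose to cancel $\conj_X$ on the right ``once $\conj_X$ is known to be (regularly) epic.'' This cannot work: $\conj_X$ is not an epimorphism in general --- in the category of groups its image is $\Inn(X)\leq\Aut(X)$, typically proper --- and nothing in hypothesis (a) makes it one; indeed the whole point of the lemma is that (a) only forces $\conj_X$ to be a \emph{split} epimorphism, which is exactly what you are trying to prove. The missing ingredient is not any epicness of $\conj_X$ but the uniqueness clause in the universal property of the generic split extension: there is \emph{at most one} morphism of split extensions, restricting to the identity on the kernel $X$, from any split extension with kernel $X$ into the generic one. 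You actually hold all the pieces for this: from $\tau=\gamma_X(\theta\flat 1_X)$ and $\gamma_X=\tau(\conj_X\flat 1_X)$, substitute the \emph{second} identity into the \emph{first} (you substituted in the opposite order, which is what produced the too-weak relation) to get $\tau=\tau\bigl((\conj_X\theta)\flat 1_X\bigr)$; since $\tau$ is the terminal action, the unique morphism $g:[X]\to[X]$ with $\tau=\tau(g\flat 1_X)$ is $g=1_{[X]}$, whence $\conj_X\theta=1_{[X]}$ immediately. This is precisely what the paper does, but at the level of split extensions rather than actions: the composite of $(\theta\times 1,\theta)$, from the product-form generic split extension to the indiscrete-groupoid split extension, with $(\varphi,\conj_X)$ from \eqref{diagram:conjugation_morphism} is an endomorphism of the generic split extension fixing the kernel, hence equals the identity, giving $\conj_X\theta=1_{[X]}$. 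A secondary point: your detour through $\flat$, $\gamma_X$ and crossed modules presupposes the equivalence $\SplExt(\C)\simeq\Act(\C)$, which the paper develops only in the semi-abelian setting (it needs binary coproducts), while the lemma is stated for pointed protomodular categories; the paper's argument, and the repaired version above phrased via uniqueness of morphisms into the generic split extension, stays within split extensions and avoids this extra hypothesis.
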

\begin{proof}
(a) $\Rightarrow$ (b):
Consider the diagram 
\[ \xymatrix{ X\ar@{=}[d] \ar[r]^-{\langle 0,1\rangle} &
[X]\times X \ar[d]^{\theta \times 1} \ar@<0.5ex>[r]^-{\pi_1} &
[X]\ar[d]^{\theta}\ar@<0.5ex>[l]^-{\langle 1,\theta\rangle}\\ X\ar@{=}[d]
\ar[r]^-{\langle 0,1\rangle} & X\times X\ar[d]^{\varphi} \ar@<0.5ex>[r]^-{\pi_1}
& X\ar@<0.5ex>[l]^-{\langle 1,1\rangle}\ar[d]^{\conj_X}\\ X \ar[r]^-{\langle
0,1\rangle} & [X]\times X \ar@<0.5ex>[r]^-{\pi_1} & [X].\ar@<0.5ex>[l]^-{\langle
1,\theta\rangle} } \] where the lower part is the unique morphism into the
generic split extension with kernel $X$ which defines $\conj_X$. The universal
property of the generic split extension implies that the composite $\conj_X\theta =
1_{[X]}$.\\[2ex] (b) $\Rightarrow$ (a): Suppose that $\conj_X$ is a split
epimorphism with splitting $\theta: [X]\to X$ and consider the diagram 
\[
\xymatrix{ X\ar@{=}[d] \ar[r]^-{\langle 0,1\rangle} & [X]\times X \ar[d]^{\theta
\times 1} \ar@<0.5ex>[r]^-{\pi_1} & [X]\ar[d]^{\theta}\ar@<0.5ex>[l]^-{\langle
1,\theta\rangle}\\ X\ar@{=}[d] \ar[r]^-{\langle 0,1\rangle} & X\times
X\ar[d]^{\varphi} \ar@<0.5ex>[r]^-{\pi_1} & X\ar@<0.5ex>[l]^-{\langle
1,1\rangle}\ar[d]^{\conj_X}\\ X \ar[r]^-{k} & [X]\ltimes X \ar@<0.5ex>[r]^-{p_1} &
[X]\ar@<0.5ex>[l]^-{i} } \] in which the lower part is the unique morphism into
the generic split extension with kernel $X$ which defines $\conj_X$.
 The claim now
follows from the fact that since $\conj_X \theta = 1_{[X]}$, the split
short five lemma implies that
$\varphi(\theta \times 1)$
 is an isomorphism.
\end{proof}
\begin{theorem}
\label{theorem:char_of_proto-complete}
Let
$\C$ be a pointed protomodular category.
For an object $X$ in $\C$ the
following are equivalent: 
\begin{enumerate}[(a)]
\item $X$ is \pinj{} and satisfies Condition \ref{condition:nice_center},
\item the
generic split extension with kernel $X$ exists and is of the form 
\[ \xymatrix{ X
\ar[r]^-{\langle 0,1\rangle} & [X]\times X \ar@<0.5ex>[r]^-{\pi_1} &
[X]\ar@<0.5ex>[l]^-{\langle 1,\theta\rangle} } \] for some morphism $\theta$.
\item the
generic split extension with kernel $X$ exists and $\conj_X$ is a split
epimorphism.
\end{enumerate}
\end{theorem}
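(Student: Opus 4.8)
The plan is to prove the cycle of implications (a) $\Rightarrow$ (b) $\Rightarrow$ (c) $\Rightarrow$ (a), leaning on Lemma \ref{lemma:conjugation_splits} (which already gives (b) $\Leftrightarrow$ (c) once the generic split extension is known to exist) and on the product decomposition of Theorem \ref{theorem:proto-complete_product_decomposition}.

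For (a) $\Rightarrow$ (b), I would argue as follows. Assuming $X$ is \pinj{} and satisfies Condition \ref{condition:nice_center}, Theorem \ref{theorem:proto-complete_product_decomposition} decomposes $X$ as a product $Z(X) \times (X/Z(X))$ of an abelian \pinj{} object and a \scmp{} object. But to conclude (b) I need the generic split extension with kernel $X$ to exist \emph{and} to have the split form displayed. The natural route is to examine an arbitrary split extension with kernel $X$: by Lemma \ref{lemma:split_extensions_are_products}, \pinjness{} forces every such split extension to be (up to isomorphism) of the product form $X \to B \times X \to B$ with section $\langle 1,\theta\rangle$. I would then show that the collection of such split extensions has a terminal object — equivalently that the fibre $K^{-1}(X)$ has a terminal object — and that this terminal object is itself of product form. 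Since \pinjness{} says every split extension over $X$ splits as a product, and these product split extensions are classified by morphisms $\theta \colon B \to X$, the terminal one should correspond to the universal such $B$, giving $[X]$ together with the displayed generic split extension.

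For (c) $\Rightarrow$ (a), I would start from the hypothesis that the generic split extension with kernel $X$ exists and $\conj_X$ is a split epimorphism. By Lemma \ref{lemma:conjugation_splits}, (c) is equivalent to (b), so the generic split extension has the product form $X \to [X] \times X \to [X]$ with section $\langle 1,\theta\rangle$. Now take any protosplit monomorphism $\kappa \colon X \to A$, i.e. the kernel of a split epimorphism $\alpha \colon A \to B$ with section $\beta$. Via the universal property of the split extension classifier there is a unique comparison morphism of split extensions from this extension into the generic one, realizing $A$ as a pullback-type object over $[X] \times X$. Because the generic split extension is a genuine product projection, I can pull back the canonical splitting $\pi_2 \colon [X] \times X \to X$ along the comparison morphism to produce a retraction $A \to X$ of $\kappa$; this shows $\kappa$ is a split monomorphism and hence $X$ is \pinj{}. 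Condition \ref{condition:nice_center} must also be recovered: here the product form is doing the work, since the center of $X$ is visible as a factor, so the relevant centers, the morphism $\langle z_X, z_X\rangle$, and the needed cokernels all exist by tracking them through the product decomposition together with Proposition \ref{proposition:center_of_products}.

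The main obstacle I anticipate is the existence half of (a) $\Rightarrow$ (b): proving that the generic split extension \emph{exists} (not merely that, if it exists, it has product form). For a general pointed protomodular category the split extension classifier $[X]$ need not exist a priori, so I must manufacture it from \pinjness{} plus Condition \ref{condition:nice_center}. The key idea is that \pinjness{} collapses the fibre $K^{-1}(X)$ onto the essentially simpler category of product split extensions $X \to B \times X \to B$, whose morphisms reduce to morphisms $\theta \colon B \to X$ compatible with the action; the terminal object of this reduced category is exactly the generic split extension, and Condition \ref{condition:nice_center} guarantees that the center-related data survive the construction. Verifying that this reduced fibre genuinely has a terminal object — rather than merely being well-behaved — is where the careful work lies, and I would expect to invoke the structure of $\KG(\C)$ and the terminality of the $X$-groupoid $(X,[X]\ltimes X,[X],p_1,p_2,i,m,k)$ from the Preliminaries to pin it down.
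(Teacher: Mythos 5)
There is a genuine gap, and it sits exactly where you predicted: the existence half of (a) $\Rightarrow$ (b). Your proposed way of filling it --- ``invoke the structure of $\KG(\C)$ and the terminality of the $X$-groupoid $(X,[X]\ltimes X,[X],p_1,p_2,i,m,k)$ from the Preliminaries'' --- is circular, because that terminal $X$-groupoid is only available \emph{after} the generic split extension with kernel $X$ is known to exist, which is precisely what must be proved. Likewise, ``the terminal one should correspond to the universal such $B$'' is not a construction: nothing in your sketch identifies which $B$ is universal, nor why a universal one exists. The paper's proof supplies exactly this missing content, in three steps. First, it identifies the candidate: writing $X=[X]\times A$ for the decomposition of Theorem \ref{theorem:proto-complete_product_decomposition} (so $[X]=X/Z(X)$ is \scmp{} and $A=Z(X)$ is abelian), the claimed generic split extension is $X\to[X]\times X\to[X]$ with section $\langle 1,\langle 1,0\rangle\rangle$. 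Second, given an arbitrary split extension with kernel $X$, written in product form via Lemma \ref{lemma:split_extensions_are_products} with section $\langle 1,\langle f,g\rangle\rangle$ where $f:B\to[X]$ and $g:B\to A$, it constructs a comparison morphism explicitly as the composite of $\langle \pi_1,\langle \pi_1\pi_2,\pi_2\pi_2-g\pi_1\rangle\rangle$ with $f\times 1$; the subtraction here uses the internal abelian group structure of $A$, so the abelianness of the center factor is doing real work that cannot be recovered from your remark that product extensions are ``classified by morphisms $\theta:B\to X$.'' Third, uniqueness of the comparison is reduced by protomodularity to forcing the codomain component, which follows from \scmp{}ness of $[X]$: the normal monomorphism $\langle 0,\langle 1,0\rangle\rangle$ has the splitting $\pi_1\pi_2$, and uniqueness of sections pins down $\pi_1\pi_2 u=\pi_1\pi_2$, whence $v=f$. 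None of these three steps appears in your proposal, and together they are the actual substance of the theorem.

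For the remaining implications your outline matches the paper. The equivalence (b) $\Leftrightarrow$ (c) is Lemma \ref{lemma:conjugation_splits}, as you say, and your (c) $\Rightarrow$ (a) argument for \pinjness{} (any protosplit monomorphism factors through $\langle 0,1\rangle:X\to[X]\times X$, so $\pi_2$ composed with the comparison morphism is a retraction) is the paper's argument. For recovering Condition \ref{condition:nice_center}, however, the paper is more direct than your ``track the center through the product decomposition'': it observes that under (c) the morphisms $\conj_X$ and $\varphi$ of diagram \eqref{diagram:conjugation_morphism} are split, hence normal, epimorphisms whose kernels are $z_X$ and $\langle z_X,z_X\rangle$ respectively, which yields at once the existence of the center, its normality data, and the required cokernels.
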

\begin{proof}
The equivalence of (b) and (c) follow from the previous lemma.
Suppose the assumptions in (a) hold. According to Theorem
\ref{theorem:proto-complete_product_decomposition}
there exists objects
$A$ and $[X]$ in $\C$ such that $A$ is abelian and \pinj{}, $[X]$ is \scmp{},
and $X=[X]\times A$.
We will show that
\begin{equation}
\label{diagram:proto-complete_classifier}
\vcenter{
\xymatrix{
X\ar[r]^-{\langle 0,1\rangle} & [X]\times X \ar@<0.5ex>[r]^-{\pi_1} &
[X] \ar@<0.5ex>[l]^-{\langle 1,\langle 1,0\rangle \rangle} 
}
}
\end{equation}
is a generic split extension with kernel $X$. For any
split extension with kernel $X$, which by Lemma
\ref{lemma:split_extensions_are_products}
is (up to isomorphism) of the form displayed at the top of the
following diagram, the composite of the morphisms
\[
\xymatrix{
X\ar[r]^-{\langle 0,1\rangle}\ar@{=}[d] & 
B\times X \ar@<0.5ex>[r]^-{\pi_1}
\ar[d]|{\langle \pi_1,\langle \pi_1\pi_2,\pi_2\pi_2-g\pi_1\rangle\rangle} &
B \ar@<0.5ex>[l]^-{\langle 1,\langle f,g\rangle \rangle}\ar@{=}[d]\\ 
X\ar[r]^-{\langle 0,1\rangle}\ar@{=}[d] &
B\times X \ar@<0.5ex>[r]^-{\pi_1}\ar[d]^{f\times 1} &
B \ar@<0.5ex>[l]^-{\langle 1,\langle f,0\rangle \rangle}\ar[d]^{f}\\
X\ar[r]^-{\langle 0,1\rangle} & [X]\times X \ar@<0.5ex>[r]^-{\pi_1} &
[X], \ar@<0.5ex>[l]^-{\langle 1,\langle 1,0\rangle \rangle} 
}
\]
gives a morphism to \eqref{diagram:proto-complete_classifier}.
It remains only to show that this is the unique such morphism.
By protomodularity it is sufficient to show that for a morphism
of split extensions
\[
\xymatrix{
X\ar[r]^-{\langle 0,1\rangle}\ar@{=}[d] &
B\times X \ar@<0.5ex>[r]^-{\pi_1}\ar[d]^{u} &
B \ar@<0.5ex>[l]^-{\langle 1,\langle f,g\rangle \rangle}\ar[d]^{v}\\
X\ar[r]^-{\langle 0,1\rangle} & [X]\times X \ar@<0.5ex>[r]^-{\pi_1} &
[X], \ar@<0.5ex>[l]^-{\langle 1,\langle 1,0\rangle \rangle} 
}
\]
we must have $v=f$.
To do so note that the diagram
\[
\xymatrix{
[X]\ar[r]^(0.60){\langle 1,0\rangle}\ar[d]_{1_{[X]}}
\ar@/^17.22pt/[rr]^{\langle 0,\langle 1,0\rangle \rangle}&
X\ar[rd]_{\langle 0,1\rangle}\ar[r]^(0.40){\langle 0,1\rangle}&
B\times X\ar[d]^{u}\\
[X]&X\ar[l]^{\pi_1}&[X]\times X\ar[l]^{\pi_2}
}
\]
commutes, and so since $[X]$ is \scmp{}, and $\langle 0,\langle 1,0\rangle\rangle$
is normal with splitting $\pi_1\pi_2$
it follows that
$\pi_1\pi_2u=\pi_1\pi_2$. This means that 
\[v = \pi_1\pi_2 \langle 1,\langle 1,0\rangle v
    = \pi_1\pi_2 u \langle 1,\langle f,g \rangle\rangle
    = \pi_1\pi_2\langle 1,\langle f,g\rangle\rangle=f\]
as desired. The claim now follows by noting that: (b) implies that every
protosplit monomorphism with domain $X$
factors through $\langle 0,1\rangle:X\to [X]\times X$
and hence splits; in the protomodular context (c) implies that the morphism
$\conj_X$ and $\varphi$ in
\eqref{diagram:conjugation_morphism}, whose kernels are $z_X$ and
$\langle z_X,z_X\rangle$ respectively, are normal epimorphisms.
\end{proof}

\begin{corollary}
\label{corollary:X_inj_=>_X_=_Z(X)x[X],_[X]_complete}
Let $\C$ be a pointed Barr-exact protomodular category admitting
centers.
If $X$ is \inj{} (respectively \pinj{}), then,
$[X]$, the split extension classifier for $X$ exists and is \scmp{},
$Z(X)$ is an abelian \inj{} (respectively \pinj{}) object, 
and $X \cong Z(X)\times [X]$.  
\end{corollary}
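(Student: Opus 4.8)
The plan is to assemble two earlier results, Theorem~\ref{theorem:proto-complete_product_decomposition} and Theorem~\ref{theorem:char_of_proto-complete}, once I have checked that their common hypothesis, Condition~\ref{condition:nice_center}, is automatic in the present setting. Since \injness{} implies \pinjness{} by Proposition~\ref{proposition:implications_between_completeness}, I would treat the two cases uniformly: assume $X$ is \pinj{}, while keeping track of the fact that when $X$ is moreover \inj{} the abelian factor produced below inherits \injness{} rather than merely \pinjness{}.

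First I would verify Condition~\ref{condition:nice_center} for $X$. The center $z_X\colon Z(X)\to X$ exists by hypothesis; being a central monomorphism it is Bourn-normal by the unlabelled proposition immediately preceding Condition~\ref{condition:nice_center}, and since $\C$ is Barr-exact it is therefore normal by the Remark following Definition~\ref{definition:completeness}. For the map $\langle z_X,z_X\rangle\colon Z(X)\to X\times X$ I would note that it is a monomorphism (its first component is $z_X$) and that, by Proposition~\ref{proposition:center_of_products}(i), it commutes with $1_{X\times X}$ precisely because each of its two components $z_X$ commutes with $1_X$; hence it too is a central, and therefore normal, monomorphism. Finally, the required cokernels exist: in a pointed Barr-exact protomodular category a normal monomorphism is normal to an equivalence relation which, by Barr-exactness, is effective, and the associated quotient is its cokernel. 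This establishes Condition~\ref{condition:nice_center}.

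With Condition~\ref{condition:nice_center} in hand, Theorem~\ref{theorem:proto-complete_product_decomposition} yields a decomposition $X\cong Z(X)\times (X/Z(X))$ in which $Z(X)$ is an abelian \pinj{} (respectively \inj{}) object and $X/Z(X)$ is \scmp{}. It then remains only to identify the \scmp{} factor with the split extension classifier. For this I would invoke Theorem~\ref{theorem:char_of_proto-complete}: its proof shows that under exactly hypothesis (a) the generic split extension with kernel $X$ exists, and that its codomain --- the split extension classifier $[X]$ --- is the \scmp{} factor of the decomposition above, namely $X/Z(X)$. Thus $[X]$ exists, equals $X/Z(X)$, is \scmp{}, and $X\cong Z(X)\times[X]$, as claimed.

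The only genuine work lies in the first step, the verification of Condition~\ref{condition:nice_center}; the remark following that condition records it for semi-abelian categories, but here coproducts are not assumed, so the self-contained argument above is needed. Its two delicate points are the normality of $\langle z_X,z_X\rangle$ (handled via centrality together with Proposition~\ref{proposition:center_of_products}(i)) and the existence of the two cokernels (handled via Barr-exactness, which promotes the equivalence relation to which each normal monomorphism is normal to an effective one). Everything downstream is a direct application of Theorems~\ref{theorem:proto-complete_product_decomposition} and~\ref{theorem:char_of_proto-complete}.
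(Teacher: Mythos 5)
Your proposal is correct and follows the paper's (implicit) route: the corollary is stated without a separate proof precisely because it is the combination of Theorem~\ref{theorem:proto-complete_product_decomposition} and Theorem~\ref{theorem:char_of_proto-complete}, which is exactly how you assemble it, including the identification of the \scmp{} factor $X/Z(X)$ with the split extension classifier $[X]$ made in the proof of the latter theorem. Your verification of Condition~\ref{condition:nice_center} --- centrality of $\langle z_X,z_X\rangle$ via Proposition~\ref{proposition:center_of_products}(i), Bourn-normality of central monomorphisms in the pointed protomodular setting, normality by Barr-exactness, and existence of the cokernels from effectivity of equivalence relations together with the fact that regular epimorphisms in pointed protomodular categories are cokernels of their kernels --- correctly supplies the one step the paper leaves implicit, since the remark following that condition records it only for semi-abelian categories, whereas here binary coproducts are not assumed.
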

 
\begin{corollary}
Let
$\C$ be a pointed Barr-exact protomodular category admitting centers.
An object $X$ in $\C$ is
\inj{} and abelian, if and only if the split extension classifier for $X$ is the zero
object.
\end{corollary}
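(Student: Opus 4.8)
The plan is to prove the two implications separately, the forward one resting on Corollary~\ref{corollary:X_inj_=>_X_=_Z(X)x[X],_[X]_complete} and on the standing observation that an object $X$ is abelian exactly when its center $z_X\colon Z(X)\to X$ is an isomorphism.

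For the forward implication, suppose $X$ is \inj{} and abelian. Corollary~\ref{corollary:X_inj_=>_X_=_Z(X)x[X],_[X]_complete} then guarantees that the split extension classifier $[X]$ exists, that it is \scmp{}, and that there is an isomorphism $X\cong Z(X)\times[X]$. Since $X$ is abelian the product $Z(X)\times[X]$ is abelian, and since the center of a product is the product of the factors' centers (Proposition~\ref{proposition:center_of_products}) each factor of an abelian product is again abelian; in particular $[X]$ is abelian, so $Z([X])\cong[X]$. On the other hand $[X]$ is \scmp{}, hence has trivial center by Proposition~\ref{proposition:proto-complete+centerless=strong-complete}, that is $Z([X])=0$. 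Combining the two gives $[X]\cong Z([X])=0$, as claimed. Note that the \injness{} hypothesis is used essentially here: abelianness alone only forces $\conj_X=0$ and leaves $[X]$ possibly large.

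For the converse, suppose the classifier $[X]$ is the zero object. Then the only morphism $X\to[X]$ is the zero morphism, so the conjugation morphism $\conj_X$ of \eqref{diagram:conjugation_morphism} vanishes; as its kernel is the center $z_X$, we obtain $Z(X)=X$, so $X$ is abelian. Moreover $\conj_X=0$ is trivially a split epimorphism, so Theorem~\ref{theorem:char_of_proto-complete} shows that $X$ is \pinj{} and satisfies Condition~\ref{condition:nice_center}; concretely, every split extension with kernel $X$ is, up to isomorphism, a product, whence every protosplit monomorphism out of $X$ splits.

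The remaining step, which I expect to be the main obstacle, is to upgrade \pinjness{} to \injness{}, i.e. to show that \emph{every} normal monomorphism with domain $X$ splits and not merely the protosplit ones. The universal property of $[X]$ governs only split extensions, so it does not bear directly on a general normal monomorphism $\kappa\colon X\to A$. The approach I would take is to use that $[X]=0$ makes every internal action on $X$ trivial, so that $\kappa$ necessarily realizes $X$ as a central subobject of $A$, and then to try to deduce that such a central extension of $X$ splits. Showing that no nonsplit central extension of $X$ can occur is precisely the delicate point, and it is here that Barr-exactness together with the abelian, \pinj{} structure of $X$ would have to be combined; before committing to the general argument I would test this passage from protosplit to normal monomorphisms on the category of groups, since that is exactly where the step is most likely to be problematic.
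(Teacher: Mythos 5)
Your forward implication is correct, and it is a mild variant of the paper's own argument: the paper notes that $\conj_X$, being a split (hence regular) epimorphism, is the cokernel of its kernel $z_X$, which is an isomorphism when $X$ is abelian, so $[X]\cong 0$; you instead combine the decomposition $X\cong Z(X)\times[X]$ of Corollary \ref{corollary:X_inj_=>_X_=_Z(X)x[X],_[X]_complete} with Propositions \ref{proposition:center_of_products} and \ref{proposition:proto-complete+centerless=strong-complete} to make $[X]$ simultaneously abelian and centerless. Both routes are fine.

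The obstacle you isolate in the converse, however, is not a gap you failed to close but an impossibility, and the test you propose settles it. In the category of groups (which is pointed, Barr-exact, protomodular and admits centers), $X=\z{2}$ has $[X]=\Aut(\z{2})\cong 0$ and is abelian, yet the normal monomorphism $\z{2}\to\z{4}$ does not split, so $X$ is not \inj{}; the paper itself records this fact (``the group $\z{2}$ is \pinj{} but not \inj{}''). What $[X]\cong 0$ really yields is exactly what you derived: $X$ abelian and \pinj{}. The paper's own proof goes no further either --- it argues entirely through Theorem \ref{theorem:char_of_proto-complete} (``$X$ is \pinj{} if and only if $\conj_X$ splits'') and thereby proves the equivalence of $[X]\cong 0$ with ``$X$ is abelian and \pinj{}''; nowhere does it extract \injness{} from $[X]\cong 0$. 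So the occurrence of \inj{} in the statement should evidently be \pinj{} (a slip in the paper), and with that correction your argument, stopping exactly where you stopped, is a complete proof. The right conclusion to draw from your ``delicate point'' is the $\z{2}$ counterexample, not a further argument attempting to split arbitrary central extensions.
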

\begin{proof} Let $X$ be an object. Since every \inj{} object has a split
extension classifier, we may assume that the split extension classifier for $X$
exists, and hence so does $\conj_X$.  Recall
that $z_X$ is the kernel of $\conj_X$, $X$ is abelian if and only if $z_X$ is an
isomorphism, and according to Theorem 
\ref{theorem:char_of_proto-complete},
$X$ is \pinj{} if and
only if $\conj_X$ splits.  Now, if $[X] \cong 0$ then $\conj_X =0$ and hence is a split
epimorphism and its kernel $z_X$ is an isomorphism. Conversely, if $z_X$ is an
isomorphism and $\conj_X$ is a split epimorphism, then $\conj_X$ being the cokernel of
$z_X$ is zero and $[X] \cong 0$.  
\end{proof}
 

Let us recall the following two facts (rephrased with our terminology): 

\begin{proposition} If $X$
is a group such that $\Aut(X)\cong 0$, then $X$ is isomorphic to $0$ or $\z{2}$.
The only (up to isomorphism) non-zero abelian \pinj{} group is $\z{2}$.
\end{proposition}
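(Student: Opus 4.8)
The plan is to prove the two assertions in turn, deriving the classification of abelian \pinj{} groups from the classification of groups with trivial automorphism group together with the characterization of \pinjness{} recorded in Theorem \ref{theorem:char_of_proto-complete}.

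First I would establish the statement about groups with trivial automorphism group. Suppose $\Aut(X)\cong 0$. Since the inner automorphisms form a subgroup of $\Aut(X)$ with $\Inn(X)\cong X/Z(X)$, triviality of $\Aut(X)$ forces $X=Z(X)$, so $X$ is abelian. For an abelian group the inversion map $x\mapsto -x$ is an automorphism, and its triviality gives $2x=0$ for all $x$; hence $X$ is a vector space over $\z{2}$. If $\dim_{\z{2}}X\geq 2$, then transposing two basis vectors is a non-identity $\z{2}$-linear automorphism of $X$, contradicting $\Aut(X)\cong 0$. Thus $\dim_{\z{2}}X\leq 1$, that is $X\cong 0$ or $X\cong \z{2}$.

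Next I would treat abelian \pinj{} groups. The key observation is that for an abelian group $X$ the conjugation morphism $\conj_X\colon X\to \Aut(X)$ is the zero morphism, since conjugation by any element of an abelian group is the identity automorphism (equivalently $Z(X)=X=\Ker(\conj_X)$). Because the category of groups is semi-abelian, every object satisfies Condition \ref{condition:nice_center} and admits a generic split extension, so Theorem \ref{theorem:char_of_proto-complete} applies and tells us that $X$ is \pinj{} if and only if $\conj_X$ is a split epimorphism. A split epimorphism is in particular an epimorphism, so the zero morphism $X\to \Aut(X)$ can be a split epimorphism only when $\Aut(X)\cong 0$. Invoking the first part we obtain $X\cong 0$ or $X\cong \z{2}$, and restricting to non-zero $X$ leaves exactly $\z{2}$. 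Conversely $\z{2}$ is abelian with $\Aut(\z{2})\cong 0$, so its (zero) conjugation morphism is a split epimorphism and $\z{2}$ is \pinj{}; this confirms that $\z{2}$ is a non-zero abelian \pinj{} group.

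The elementary group theory in the first part is routine, so the one step that deserves care is the bridge in the second part: recognizing that \pinjness{} of an abelian object reduces, via Theorem \ref{theorem:char_of_proto-complete}, to the zero morphism $\conj_X$ being a split epimorphism, and hence to the triviality of $\Aut(X)$. Once this reduction is in place the classification follows immediately from the first part, so I expect no serious obstacle beyond the correct application of the characterization theorem.
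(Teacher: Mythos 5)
Your proposal is correct and takes essentially the same route as the paper: the paper's own proof is exactly your first paragraph (inversion forces $2x=0$, so $X$ is a $\z{2}$-vector space, and a basis transposition bounds the dimension), while it leaves implicit both the deduction that $\Aut(X)\cong 0$ makes $X$ abelian and the reduction of abelian \pinjness{} to $\conj_X=0$ being a split epimorphism via Theorem \ref{theorem:char_of_proto-complete}. You have merely made those implicit steps explicit, and your bridge is the intended one, so there is nothing to correct.
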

\begin{proof} Since $X$ is abelian it follows that the map
sending $x$ to $-x$ is an automorphism and hence must be equal to $1_X$. This
means that for each $x$ in $X$, $0 = x + -x = x + x = 2x$ and so $X$ is a vector
space over $\z{2}$. Since any vector space has a non-trivial automorphism
corresponding to any non-trivial permutation of elements in its basis it follows
that $X$ has dimension at most one and hence is isomorphic to $\z{2}$ or $0$.
\end{proof}
 
\begin{proposition} If $X$ is a Lie algebra over a commutative ring
$R$ such that $\Der(X)\cong 0$, then $X \cong 0$. There are no non-zero abelian
\pinj{} Lie algebras.  
\end{proposition}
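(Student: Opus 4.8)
The plan is to mirror the structure of the preceding proposition about groups, exploiting the facts recalled in the preliminaries that the category of Lie algebras over $R$ is semi-abelian and action representable with split extension classifier $[X]=\Der(X)$, and that under this identification $\conj_X\colon X\to \Der(X)$ is the map sending $x$ to the inner derivation $\operatorname{ad}_x=[x,-]$.

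For the first assertion I would argue by contraposition: assuming $X\neq 0$, I would exhibit a nonzero derivation, splitting into two cases. If $X$ is not abelian, then there are $x,y$ with $[x,y]\neq 0$, so that $\operatorname{ad}_x$ is a nonzero inner derivation and $\Der(X)\neq 0$. If instead $X$ is abelian, then the bracket vanishes identically and the derivation condition $d[x,y]=[dx,y]+[x,dy]$ holds trivially for every $R$-linear endomorphism $d$; hence $\Der(X)=\operatorname{End}_R(X)$, which contains the identity $1_X$. Since $X\neq 0$ forces $1_X\neq 0$, again $\Der(X)\neq 0$. Thus $\Der(X)\cong 0$ implies $X\cong 0$.

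For the second assertion, let $X$ be abelian and \pinj{}. Because Lie algebras over $R$ form a semi-abelian category admitting centers, Condition~\ref{condition:nice_center} holds automatically for $X$, so Theorem~\ref{theorem:char_of_proto-complete} applies and yields that the generic split extension with kernel $X$ exists with $\conj_X$ a split epimorphism. On the other hand, $X$ being abelian makes every inner derivation $\operatorname{ad}_x$ vanish, so $\conj_X=0$. A zero morphism can be a split epimorphism only if its codomain is a zero object (if $\conj_X s = 1_{[X]}$ then $1_{[X]}=0$); hence $\Der(X)=[X]\cong 0$, and the first part yields $X\cong 0$.

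The computations are routine; the only point requiring care is the appeal to Theorem~\ref{theorem:char_of_proto-complete}, for which one must confirm that the category of Lie algebras over an \emph{arbitrary} commutative ring genuinely meets its hypotheses, namely pointed protomodularity together with Condition~\ref{condition:nice_center}, and that the classifier is $\Der(X)$ with $\conj_X=\operatorname{ad}$. These are exactly the facts recorded in the preliminaries (semi-abelianness, action representability, and the automatic validity of Condition~\ref{condition:nice_center} in semi-abelian varieties), so no genuine obstacle arises. I would stress that, in contrast to the group case, the elementary ``permutation of a basis'' argument is unavailable over a general ring, which is precisely why the reduction through $\conj_X=0$ and the identity derivation $1_X$ is the right device.
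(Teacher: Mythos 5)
Your proof is correct and is essentially the paper's own argument: the crux in both is that for abelian $X$ every $R$-linear endomorphism, in particular $1_X$, is a derivation, so $\Der(X)\cong 0$ forces $X\cong 0$. The steps you spell out --- that vanishing of inner derivations reduces the first claim to the abelian case, and that an abelian \pinj{} Lie algebra has $\conj_X=0$ a split epimorphism, whence $[X]=\Der(X)\cong 0$ by Theorem \ref{theorem:char_of_proto-complete} --- are exactly what the paper's one-sentence proof leaves implicit.
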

\begin{proof} Since $X$ is abelian it
follows that any $R$-linear map is a derivation so that in particular the
identity
map is a derivation and hence must be equal to $0$.  
\end{proof}
\begin{proposition} Let $\C$ be an anti-additive action representable
semi-abelian category. If $[X] \cong 0$, then $X \cong 0$.  
\end{proposition}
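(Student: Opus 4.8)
The plan is to show that the hypothesis $[X]\cong 0$ is far stronger than it looks: it forces $X$ to be \emph{abelian}, after which the defining feature of an anti-additive category (the absence of non-zero abelian \pinj{} objects, as witnessed by the two preceding propositions for groups and Lie algebras) finishes the argument at once. Since $\C$ is action representable, the generic split extension with kernel $X$ exists, so the conjugation morphism $\conj_X\colon X\to[X]$ is defined; and recall from the preliminaries that its kernel is precisely the center $z_X\colon Z(X)\to X$. If $[X]\cong 0$, then $\conj_X$ is the (unique) zero morphism $X\to 0$, whence $Z(X)=\ker(\conj_X)=X$. In other words $z_X$ is an isomorphism, which is exactly the statement that $X$ is abelian.

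Next I would record that $X$ is moreover \pinj{}. The morphism $\conj_X\colon X\to[X]\cong 0$ is trivially a split epimorphism, since any morphism to the zero object is split epic; and $\C$, being semi-abelian, satisfies Condition \ref{condition:nice_center} at every object. Hence the implication (c)$\Rightarrow$(a) of Theorem \ref{theorem:char_of_proto-complete} applies and shows $X$ is \pinj{}. One can equally read this straight off the universal property: when $[X]\cong 0$ the generic split extension is $X\xrightarrow{1_X}X\to 0$, and the unique morphism into it from an arbitrary split extension with kernel $X$ supplies a retraction of the kernel inclusion, so every protosplit monomorphism with domain $X$ splits.

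It then remains only to invoke anti-additivity. We have produced an abelian object $X$ that is \pinj{} — equivalently, an abelian object with $[X]\cong 0$, since for an abelian object $\conj_X$ is the zero map and the zero map $X\to[X]$ is a split epimorphism exactly when $[X]\cong 0$. As an anti-additive category admits no non-zero abelian \pinj{} object, we conclude $X\cong 0$, as claimed.

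The only genuinely delicate step is the first one, the passage $[X]\cong 0\Rightarrow X$ abelian, which rests entirely on the identification $\ker(\conj_X)=z_X$ recalled in the preliminaries; everything afterwards is a direct appeal to Theorem \ref{theorem:char_of_proto-complete} and to the definition of anti-additivity, with no computation. I would only be careful to flag that action representability is precisely what guarantees that $[X]$, and hence $\conj_X$, exists at all, so that the argument is not vacuous.
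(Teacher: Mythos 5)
Your proof is correct and takes essentially the paper's route: from $[X]\cong 0$ you get $\conj_X=0$, hence $Z(X)=\Ker(\conj_X)=X$, so $X$ is abelian, and anti-additivity then gives $X\cong 0$ at once. One clarification: anti-additive means that \emph{every} abelian object is a zero object (not merely every abelian \pinj{} one), so your intermediate step establishing that $X$ is \pinj{} is redundant, and your parenthetical is misleading --- neither groups nor Lie algebras form an anti-additive category (indeed $\z{2}$ is a non-zero abelian \pinj{} group), so those two propositions do not ``witness'' anti-additivity; they are separate results covering categories where the present proposition does not apply.
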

\begin{proof} The claim is immediate since $X$ is necessarily abelian and hence
isomorphic to $0$.  
\end{proof}

Recall that for an object $X$, in a pointed protomodular category
admitting a generic split extension with kernel $X$,
the object $\Out(X)$ is codomain of the cokernel $q:[X] \to \Out(X)$ of $\conj_X$.
The following theorem should be compared to Theorems 9.1 and 9.2 of
\cite{BORCEUX_BOURN:2007}.
\begin{theorem}
\label{theorem:char_of_scmp_in_action_rep_cat}
Let $\C$ be a pointed protomodular category. For an object $X$
the following are equivalent: 
\begin{enumerate}[(a)]
\item $X$ is \scmp{};
\item the generic split extension with kernel $X$ exists and 
the conjugation morphism $\conj_X : X \to [X]$ is an isomorphism;
\item the split extension
\begin{equation}
\label{diagram:conjugation_split_extension}
\vcenter{
\xymatrix{ X \ar[r]^-{\langle 0,1\rangle} & X\times X
\ar@<0.5ex>[r]^-{\pi_1} & X\ar@<0.5ex>[l]^-{\langle 1,1\rangle} }
}
\end{equation}
is a
generic split extension; 
\item the generic split extension with kernel $X$ exists, $X$
has trivial center, and $\Out(X)$ is trivial.  \setcounter{tmp}{\arabic{enumi}}
\end{enumerate}
If in addition every abelian \inj{} object in $\C$ is a zero object,
then these conditions are further equivalent to:
\begin{enumerate}[(a)] \setcounter{enumi}{\arabic{tmp}} \item 
$X$ is \inj{} and satisfies Conditon \ref{condition:nice_center}.
\end{enumerate}
\end{theorem}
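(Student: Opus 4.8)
The plan is to prove the cycle (a) $\Rightarrow$ (c) $\Rightarrow$ (b) $\Rightarrow$ (a), then the equivalence (b) $\Leftrightarrow$ (d), and finally, under the extra hypothesis, (a) $\Leftrightarrow$ (e). The technical core is (a) $\Rightarrow$ (c), and the \emph{main obstacle} will be the direction (d) $\Rightarrow$ (b).

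For (a) $\Rightarrow$ (c), I would show that \eqref{diagram:conjugation_split_extension} is terminal in the fibre $K^{-1}(X)$. Since every \scmp{} object is \pinj{}, Lemma \ref{lemma:split_extensions_are_products} lets me replace an arbitrary split extension with kernel $X$ by one of the form $X \xrightarrow{\langle 0,1\rangle} B\times X \xrightarrow{\pi_1} B$ with section $\langle 1,\theta\rangle$. A morphism of split extensions from this into \eqref{diagram:conjugation_split_extension} which is the identity on the kernel is furnished by $v=\theta$ and $u=\langle \theta\pi_1,\pi_2\rangle$ (a routine check), so a morphism exists. For uniqueness — and this is exactly where \scmpness{} enters — note that $\langle 0,1\rangle:X\to B\times X$ is a protosplit monomorphism, so by \scmpness{} its splitting is unique, hence equal to $\pi_2$. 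Thus for any morphism $(1_X,u,v)$ into \eqref{diagram:conjugation_split_extension} the composite $\pi_2u$ also splits $\langle 0,1\rangle$, forcing $\pi_2u=\pi_2$, then $v=\pi_2u\langle 1,\theta\rangle=\theta$ and finally $u=\langle\theta\pi_1,\pi_2\rangle$. This establishes terminality, so \eqref{diagram:conjugation_split_extension} is a generic split extension (in particular the generic split extension exists), and (c) holds.

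Next, (c) $\Rightarrow$ (b): the morphism $\conj_X$ is the codomain component of the unique morphism of split extensions \eqref{diagram:conjugation_morphism} out of the indiscrete split extension; when the latter is itself the generic split extension, the identity triple is such a morphism, so uniqueness gives $\conj_X=1_X$, an isomorphism. For (b) $\Rightarrow$ (a), an isomorphism $\conj_X$ is in particular a split epimorphism, so by Theorem \ref{theorem:char_of_proto-complete} the object $X$ is \pinj{}; being also a monomorphism, its kernel $z_X$ vanishes, so $X$ has trivial center, and Proposition \ref{proposition:proto-complete+centerless=strong-complete} then yields that $X$ is \scmp{}. This closes (a) $\Leftrightarrow$ (b) $\Leftrightarrow$ (c). For (b) $\Leftrightarrow$ (d) the forward direction is immediate, since an isomorphism $\conj_X$ has trivial kernel $z_X$ (trivial center) and trivial cokernel $\Out(X)$. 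For (d) $\Rightarrow$ (b), trivial center gives $z_X=\ker(\conj_X)=0$, so $\conj_X$ is a monomorphism, and by Proposition \ref{proposition:X_centerless_c_X_centralizer_free} it is moreover Bourn-normal; since $\Out(X)=\operatorname{coker}(\conj_X)=0$ and $\conj_X$ is the kernel of its cokernel (the kernel of $[X]\to 0$ being $1_{[X]}$), it follows that $\conj_X$ is an isomorphism. The delicate point is precisely this last inference: concluding that a Bourn-normal monomorphism with trivial cokernel is an isomorphism requires that $\conj_X$ be recovered as the kernel of its cokernel, which must be checked in the ambient protomodular setting (it is automatic in the Barr-exact case, where Bourn-normal monomorphisms are genuine kernels).

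Finally, for the conditional equivalence with (e): the implication (a) $\Rightarrow$ (e) holds without the extra hypothesis, since a \scmp{} object is \inj{} by Proposition \ref{proposition:implications_between_completeness} and has trivial center by Proposition \ref{proposition:proto-complete+centerless=strong-complete}, so $z_X=0$ satisfies Condition \ref{condition:nice_center} trivially (its associated maps and their cokernels are zero objects and identities). For (e) $\Rightarrow$ (a), I would apply Theorem \ref{theorem:proto-complete_product_decomposition} to write $X\cong Z(X)\times X/Z(X)$ with $Z(X)$ an abelian \inj{} object and $X/Z(X)$ a \scmp{} object; the added hypothesis that every abelian \inj{} object is a zero object then forces $Z(X)\cong 0$, so $X\cong X/Z(X)$ is \scmp{}, giving (a).
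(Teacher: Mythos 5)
Your proposal is sound wherever the paper's own proof is sound, and in one place it is more self-contained: your direct verification that \eqref{diagram:conjugation_split_extension} is terminal in the fibre $K^{-1}(X)$ (existence of comparison morphisms via Lemma \ref{lemma:split_extensions_are_products}, uniqueness because \scmpness{} forces any splitting of the protosplit monomorphism $\langle 0,1\rangle\colon X\to B\times X$ to coincide with $\pi_2$) is a genuinely different proof of (a) $\Rightarrow$ (c). The paper never argues at the level of the fibre: it obtains (a) $\Leftrightarrow$ (b) in one stroke from Theorem \ref{theorem:char_of_proto-complete} together with $z_X=\ker(\conj_X)$ (your (b) $\Rightarrow$ (a) is exactly this argument; the paper runs it in both directions, using that a trivial center makes Condition \ref{condition:nice_center} automatic), and then declares (b) $\Leftrightarrow$ (c) $\Leftrightarrow$ (d) to follow ``easily'' from the definition of $\conj_X$ and the identification of its kernel and cokernel. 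Your treatment of (e) agrees with the paper's (Theorem \ref{theorem:proto-complete_product_decomposition} plus the extra hypothesis to kill $Z(X)$), and your remark that (a) $\Rightarrow$ (e) needs no extra hypothesis is correct.

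The point you flag in (d) $\Rightarrow$ (b) is a genuine gap, and it is equally a gap in the paper, hidden in the words ``easily follows''. Your instinct is right: Bourn-normality alone does not make $\conj_X$ the kernel of its cokernel, and the implication (d) $\Rightarrow$ (b) actually fails in the stated generality. In the category of Hausdorff topological groups (pointed, protomodular, not Barr-exact), let $X$ be the \emph{discrete} finitary symmetric group $\mathrm{Sym}_{\mathrm{fin}}(\mathbb{N})$. Split extensions with discrete kernel $X$ correspond to continuous actions on $X$, i.e.\ to continuous homomorphisms into $\Aut(X)$ endowed with the topology of pointwise convergence, so the generic split extension with kernel $X$ exists and $[X]\cong\mathrm{Sym}(\mathbb{N})$ with that topology; $X$ has trivial center; and $\conj_X$ is the inclusion $\mathrm{Sym}_{\mathrm{fin}}(\mathbb{N})\hookrightarrow\mathrm{Sym}(\mathbb{N})$, a Bourn-normal monomorphism with dense image. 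Hence the categorical cokernel $\Out(X)$ is the zero object, so (d) holds, while $\conj_X$ is not an isomorphism, so (b) fails. Accordingly, the equivalence with (d) requires an additional hypothesis, e.g.\ that $\C$ be Barr-exact, so that Bourn-normal monomorphisms are normal (as in the remark following Definition \ref{definition:completeness}); this covers the semi-abelian, action-representable setting the theorem is aimed at, and in that setting your kernel-of-its-cokernel argument closes the gap exactly as you indicate, since then $\conj_X=\ker(\mathrm{coker}(\conj_X))=\ker([X]\to 0)$ is an isomorphism. Everything else in your proposal is correct.
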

\begin{proof}
Considering the definition of $\conj_X$ and recalling that the kernel of $\conj_X$ is center
of $X$ and cokernel of $\conj_X$ is $\Out(X)$ it easily follows that (b), (c) and (d) are
equivalent. Using again the fact that the center of $X$ is kernel of $\conj_X$ the
equivalence of (a) and (b) follows from Theorem 
\ref{theorem:char_of_proto-complete}
.
 The final claim
follows from Theorem
\ref{theorem:proto-complete_product_decomposition} since it implies that
$Z(X)$ is abelian and \inj{} and hence by assumption a zero object.
\end{proof}
\begin{remark}
Note that Conditions \ref{theorem:char_of_scmp_in_action_rep_cat} (a) and (c)
are equivalent for an object $X$ in a pointed finitely complete category.
\end{remark}

The content of the following proposition is well-known, we include a
proof to keep the paper more self contained.  

\begin{proposition} There are no (non-zero)
abelian \inj{} groups.  The group $\z{2}$ is \pinj{} but not \inj{}.
\end{proposition}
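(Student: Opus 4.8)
The plan is to reduce the first assertion to the second and then to settle everything by a single elementary computation with $\z{2}$. First I would note that by Proposition \ref{proposition:implications_between_completeness} every \inj{} object is \pinj{}, so any hypothetical non-zero abelian \inj{} group would in particular be a non-zero abelian \pinj{} group. Recalling the proposition proved above that the only (up to isomorphism) non-zero abelian \pinj{} group is $\z{2}$, the entire first statement collapses to showing that $\z{2}$ itself is not \inj{}. Thus it suffices to analyse the single group $\z{2}$: I must verify that it is \pinj{} and that it fails to be \inj{}.

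Next I would check that $\z{2}$ is \pinj{}. The quickest route is to observe that $[\z{2}]=\Aut(\z{2})=0$, so the conjugation morphism $\conj_{\z{2}}$ lands in the zero object and is trivially a split epimorphism; by the implication (c)$\Rightarrow$(a) of Theorem \ref{theorem:char_of_proto-complete} (Condition \ref{condition:nice_center} being automatic in the semi-abelian category of groups) this gives \pinjness{}. Concretely, the same fact says that any split extension with kernel $\z{2}$ has trivial conjugation action of its base, hence is a direct product $\z{2}\times B$, so the kernel inclusion $\langle 1,0\rangle$ splits via $\pi_1$ and every protosplit monomorphism out of $\z{2}$ is a split monomorphism.

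Finally I would exhibit a normal monomorphism out of $\z{2}$ that does not split. The inclusion of $\z{2}$ as the unique subgroup $\{0,2\}$ of order two in $\z{4}$ is normal, since $\z{4}$ is abelian, and hence a normal monomorphism. A retraction $r:\z{4}\to\z{2}$ would be forced to send the generator $2$ of this subgroup to the nonzero element of $\z{2}$, yet $r(2)=2\,r(1)=0$ in $\z{2}$, a contradiction; so this normal monomorphism is not a split monomorphism and $\z{2}$ is not \inj{}. Combining the three steps proves both assertions. I do not anticipate a genuine obstacle: the only points requiring care are the bookkeeping of the reduction (which leans on the classification of abelian \pinj{} groups recalled above) and the consistency check that $\z{4}$ provides a normal but \emph{not} protosplit monomorphism out of $\z{2}$ — the map $\z{4}\to\z{2}$ is not a split epimorphism, so no contradiction arises with $\z{2}$ being \pinj{} while failing to be \inj{}.
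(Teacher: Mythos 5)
Your proposal is correct and takes essentially the same route as the paper: the paper likewise reduces the first assertion to the second using Proposition \ref{proposition:implications_between_completeness} together with the fact that $\z{2}$ is the only non-zero abelian \pinj{} group, and likewise rules out \injness{} of $\z{2}$ via the non-split normal monomorphism $\z{2}\to\z{4}$. Your explicit verification that $\z{2}$ is \pinj{} (via $\Aut(\z{2})\cong 0$ and Theorem \ref{theorem:char_of_proto-complete}) merely spells out what the paper delegates to the earlier classification of abelian \pinj{} groups.
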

\begin{proof} Since every \inj{} group is \pinj{} and the only non-zero
abelian \pinj{} group is $\z{2}$ it suffices to prove the final claim.
However, this follows trivially from fact that the canonical monomorphism
from $\z{2}$ into $\z{4}$ is a normal monomorphism which isn't a split
monomorphism.
\end{proof}

\begin{remark}
Note that Baer's theorem which can stated using our terminology, as:
every \inj{} group is \scmp{}, is now a corollary of Theorem
\ref{theorem:char_of_scmp_in_action_rep_cat} via the previous
proposition. Furthermore, we have that if a group $G$ is \pinj{}, then
$G$ is \inj{} or $G\cong\z{2} \times H$ where 
$H$ is \inj{}.

\end{remark}

\section{Characteristic monomorphisms}
\label{section:characteristic monos}

The main purpose of this section is show that there is a common categorical
explanation behind why the derivation algebra of a perfect Lie algebra
with trivial center
and the automorphism group of a (characteristically) simple non-abelian group
are complete.

Recall that a subgroup $S$ of a group $X$ is called characteristic if every
automorphism of $X$ restricts to an automorphism of $S$. Recall also that
a subgroup $S$ of $X$ is characteristic if and only if whenever
$X$ is normal in $Y$, then $S$ is normal in $Y$. Generalizing this latter
condition A.\ S.\ Cigoli and A.\ Montoli introduced and studied the notion
of a characteristic subobjects in semi-abelian categories
\cite{CIGOLI_MONTOLI:2015}. Later D.\ Bourn gave a different definition in a
more general context \cite{BOURN:2014}, which coincides with the previously
mentioned one in the semi-abelian
context.  Here we say that a morphism $u:S\to X$ in
a category $\C$ is
characteristic monomorphism if for each Bourn-normal monomorphism $n: X\to Y$
the composite $un$ is a Bourn-normal monomorphism. In
\cite{CIGOLI_GRAY_VAN_DER_LINDEN:2015a} it was shown that for a semi-abelian
category, a morphism $u:S\to X$ is a characteristic monomorphism if and only if
for each protosplit monomorphism $\kappa : X\to A$ the composite $\kappa u$ is a
normal monomorphism. This fact essentially remains true in any pointed
\emph{Mal'tsev} category using the above definition of characteristic monomorphism.
Recall that a category is Mal'tsev if it is finitely complete 
and each (internal) reflexive relation
is an (internal) equivalence relation. Note that Mal'tsev categories were first
introduced and studied in \cite{CARBONI_LAMBECK_PEDICCHIO:1991} with exactness
as part of the definition, exactness was removed
in \cite{CARBONI_PEDICCHIO_PIRAVANO:1992}.
\begin{proposition} 
\label{proposition:equiv_char}
Let $\C$ be a pointed finitely complete Mal'tsev category
and let $u:S\to X$ be morphism in $\C$. The following are equivalent:
\begin{enumerate}[(a)]
\item $u$ is a characteristic monomorphism;
\item for
each protosplit normal monomorphism $\kappa : X\to A$
the composite $\kappa u$ is a Bourn-normal monomorphism;
\item for each reflexive relation $(R,r_1,r_2)$
on an object $Y$ with $k:X\to R$ as kernel of $r_1$, there exists a monomorphism
of reflexive relations $ v : (T, t_1,t_2) \to (R,r_1,r_2)$ with $l:S\to T$ as
kernel of $t_1$ such that $vl=ku$;
\item the same as (c) but replace ``reflexive relation'' by ``equivalence relation''.
\end{enumerate}
\end{proposition}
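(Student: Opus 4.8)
The plan is to establish the cycle (a)$\Rightarrow$(b)$\Rightarrow$(c)$\Rightarrow$(a) and to treat (c)$\Leftrightarrow$(d) separately. The equivalence of (c) and (d) is immediate from the Mal'tsev hypothesis: a reflexive relation is automatically an equivalence relation, so the two conditions quantify over the same relations, and the relation $(T,t_1,t_2)$ produced is in either case simultaneously reflexive and an equivalence relation. For (a)$\Rightarrow$(b) I would observe that a protosplit normal monomorphism $\kappa\colon X\to A$, being the kernel of a split epimorphism $\alpha$, is Bourn-normal to the kernel pair of $\alpha$ (as noted after \eqref{diagram:bourn_normal}), so that (a) applies verbatim to give that $\kappa u$ is Bourn-normal. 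For (c)$\Rightarrow$(a), given a Bourn-normal $n\colon X\to Y$ presented as normal to an equivalence relation $(R,r_1,r_2)$ with $k\colon X\to R$ the kernel of $r_1$ and $n=r_2k$, I would apply (c) (legitimately, since $R$ is in particular reflexive) to obtain $v\colon(T,t_1,t_2)\to(R,r_1,r_2)$ and $l\colon S\to T$ with $vl=ku$; then $t_2l=r_2vl=r_2(ku)$ is exactly the composite $un$, and since $l$ is the kernel of $t_1$ and $(T,t_1,t_2)$ is an equivalence relation, this composite is Bourn-normal to $T$.

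The heart of the proof, and the step I expect to be the main obstacle, is (b)$\Rightarrow$(c). The semi-abelian argument of \cite{CIGOLI_GRAY_VAN_DER_LINDEN:2015a} effectively pushes a witnessing relation forward along $r_2$ via an image factorization, a move unavailable in a merely finitely complete Mal'tsev category; so I would instead proceed entirely by pullbacks. Given a reflexive relation $(R,r_1,r_2)$ on $Y$ with reflexivity $s$ and with $k\colon X\to R$ the kernel of $r_1$, the morphism $r_1$ is split by $s$, so $k$ is a protosplit normal monomorphism; hence (b) gives that $ku\colon S\to R$ is Bourn-normal, say normal to an equivalence relation $(Q,q_1,q_2)$ on $R$. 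The key construction is to let $T$ be the pullback of $\langle q_1,q_2\rangle\colon Q\to R\times R$ along $\langle 1_R,sr_1\rangle\colon R\to R\times R$ --- conceptually, the subobject of those arrows of $R$ that are $Q$-related to the identity on their source --- with projection $w\colon T\to R$ (a monomorphism, being a pullback of the monomorphism $\langle q_1,q_2\rangle$) and $t_i:=r_iw$.

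It then remains to verify three points, each a routine diagram chase once the pullback is in place. First, $(T,t_1,t_2)$ is a reflexive relation on $Y$, and hence by Mal'tsev-ness an equivalence relation: it is a subobject of $R$ and so of $Y\times Y$, and $s$ factors through $w$ because $\langle 1_R,sr_1\rangle s=\langle s,s\rangle$ factors through $\langle q_1,q_2\rangle$ by reflexivity of $Q$. Second, $w$ is then a monomorphism of reflexive relations over $1_Y$, providing the required $v$. Third, and most importantly, the kernel of $t_1$ is $S$: restricting the defining condition to $\ker(r_1)=X$ turns it into the requirement that $\langle 1_R,sr_1\rangle k=\langle k,0\rangle$ factor through $Q$, i.e.\ that the relevant arrow lie in the $0$-class of $Q$, which by Bourn-normality of $ku$ to $Q$ is precisely the subobject $ku\colon S\to R$; since $ku$ factors through $k$, this intersection is $ku(S)\cong S$, and the induced $l\colon S\to T$ satisfies $wl=ku=vl$. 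I would emphasise that this last computation uses only the identity ``$0$-class of $Q$ equals $ku$'', valid for any equivalence relation normalising $ku$, so that no uniqueness-of-normalisation result is needed anywhere; with these verifications the cycle closes and all four conditions are equivalent.
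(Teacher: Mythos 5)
Your proposal is correct and takes essentially the same route as the paper: the same implication cycle ((a)$\Rightarrow$(b) trivially, (b)$\Rightarrow$(c) as the substantive step, (c)$\Leftrightarrow$(d) by Mal'tsev-ness, then back to (a)), and for (b)$\Rightarrow$(c) the same key construction, namely pulling back the equivalence relation witnessing Bourn-normality of $ku$ along the ``reflexivity graph'' morphism $\langle 1_R,sr_1\rangle$ (the paper pulls back along the transposed $\langle er_1,1\rangle$, which gives the same subobject of $R$ by symmetry of the equivalence relation). The only presentational difference is that the paper packages this construction as a level-wise pullback of split extensions, so that the identification $\ker(t_1)\cong S$ comes for free from kernel preservation, whereas you verify it by hand via the zero-class argument; both verifications are sound.
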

\begin{proof} Trivially (a) implies (b) and (c) is equivalent
to (d).  Now suppose that (b) holds and let $(R,r_1,r_2)$ be a reflexive
relation on $Y$ with $k:X\to R$ as kernel of $r_1$. Since $r_1$ is a split
epimorphism we know that $k$ is a protosplit monomorphism and hence $ku$ is a
Bourn-normal monomorphism by assumption. Let us write $(\bar T, \bar t_1,\bar
t_2)$ for the equivalence relation on $R$ that $k$ is zero-class of and
 $\bar l : S\to
\bar T$ for the kernel of $\bar t_1$ so that $\bar t_2 \bar l = ku$. Now,
suppose that $e: Y\to R$ and $\bar f: R\to \bar T$ are the unique morphisms such
that $r_i e=1_Y$ and $\bar t_i \bar f = 1_R$ and consider the pullback of split
extensions 
\[ \xymatrix@!@C=-1.5ex@R=-1.5ex{ S \ar[dr]^{1_S} \ar[rr]^{l} \ar[dd]_{u} && T
\ar@<0.5ex>[rr]^(0.6){t_1}\ar[dd]_(0.25){v}|(0.5){\hole}\ar[dr]^{w} && Y
\ar@<0.5ex>[ll]^(0.4){f}\ar[dr]^{e}\ar@{=}[dd]|(0.45){\hole}|(0.55){\hole}\\ &
S \ar[rr]^(0.3){\bar l}\ar[dd]_(0.3){ku} &&\bar T \ar@<0.5ex>[rr]^(0.3){\bar
t_1}\ar[dd]^(0.7){\langle \bar t_1,\bar t_2\rangle} && R
\ar@<0.5ex>[ll]^(0.7){\bar f}\ar@{=}[dd]\\ X
\ar[rr]_(0.3){k}|(0.5){\hole}\ar[dr]_{k} && R
\ar@<0.5ex>[rr]^(0.3){r_1}|(0.5){\hole}\ar[dr]_(0.3){\langle er_1,1\rangle} &&
Y\ar@<0.5ex>[ll]|(0.5){\hole}^(0.7){e} \ar[dr]^{e}\\ & R \ar[rr]_{\langle 0
,1\rangle} && R\times R \ar@<0.5ex>[rr]^{\pi_1} && R.\ar@<0.5ex>[ll]^{\langle
1,1\rangle} } \] Setting $t_2 = r_2v$ we obtain the desired monomorphism of
reflexive relations $v : (T,t_1,t_2) \to (R,r_2,r_2)$ proving (b) implies (c).
To complete the claim we will show that (d) implies (a). Suppose that (d) holds
and $n : X\to Y$ is a Bourn-normal monomorphism. By assumption there is an
equivalence relation $(R,r_1,r_2)$ on $Y$ with $k: X\to R$ as kernel of $r_1$
such that $n=r_2k$. According to (d) there is a monomorphism of equivalence
relations $v : (T,t_1,t_2) \to (R,r_1,r_2)$ with $l: S\to T$ as kernel of $t_1$
such that $vl=ku$. This means that $t_2l=r_2vl=r_2ku=nu$ proving that $nu$ is
Bourn-normal.  
\end{proof}
Recall that a pointed category with finite limits can be equivalently
defined to be strongly protomodular in the sense of D.{} Bourn
\cite{BOURN:2000} if it is protomodular and for each morphism of
split extensions
\[
\xymatrix{
X \ar[d]_{f} \ar[r]^{\kappa} & A \ar@<0.5ex>[r]^{\alpha}\ar[d]^{g} & B\ar@<0.5ex>[l]^{\beta}\ar@{=}[d]\\
Z \ar[r]_{\sigma} & C\ar@<0.5ex>[r]^{\gamma} & B\ar@<0.5ex>[l]^{\delta}
}
\]
the morphism $f$ is Bourn-normal if and only if $\sigma f$ is Bourn-normal.
Note that part (i) of the following theorem is essentially known, see Proposition 3.1
of \cite{BOURN:2014}.
\begin{theorem}
\label{theorem:char_of_char}
Let $\C$ be a pointed protomodular category,
let $X$ be an object in $\C$ such that the generic split extension with
kernel $X$ exists, and let $u: S\to X$ be a morphism.
\begin{enumerate}[(i)]
\item The morphism $u:S\to X$ is a characteristic
monomorphism if and only if
the composite $ku$ of $u$ and $k : X\to [X]\ltimes X$ is
a Bourn-normal monomorphism;
\item 
If $u:S\to X$ is a characteristic monomorphism, then
the generic split extension with kernel $(S,X,u)$ exists in $\C^\two$
and $q_2:[S,X,u]\to [X]$ is an isomorphism.
\end{enumerate}
Furthermore, when
$\C$ is strongly protomodular the converse of (ii) holds.
\end{theorem}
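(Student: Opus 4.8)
The plan is to build everything on part (i) and on Proposition \ref{proposition:equiv_char}, which applies since every protomodular category is Mal'tsev. For the forward direction of (i), note that $k\colon X\to[X]\ltimes X$ is the kernel of the split epimorphism $p_1$, hence a protosplit normal monomorphism; so if $u$ is characteristic, condition (b) of Proposition \ref{proposition:equiv_char} applied to $\kappa=k$ already says that $ku$ is Bourn-normal. For the converse I would verify condition (b) of that proposition. Given a protosplit normal monomorphism $\kappa\colon X\to A$, presented as the kernel of a split epimorphism $\alpha\colon A\to B$, the universal property of $[X]$ supplies a morphism of split extensions
\[
\xymatrix{
X\ar@{=}[d]\ar[r]^-{\kappa} & A\ar[d]^{g}\ar@<0.5ex>[r]^-{\alpha} & B\ar[d]^{v}\ar@<0.5ex>[l]^-{\beta}\\
X\ar[r]_-{k} & [X]\ltimes X\ar@<0.5ex>[r]^-{p_1} & [X]\ar@<0.5ex>[l]^-{i}
}
\]
with $g\kappa=k$; being the identity on kernels, protomodularity forces the right-hand square to be a pullback. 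Everything then reduces to a transport statement: \emph{for such a morphism of split extensions that is the identity on kernels, $\kappa u$ is Bourn-normal whenever $ku=g\kappa u$ is}. Granting it, $\kappa u$ is Bourn-normal for every $\kappa$, so (b) holds and $u$ is characteristic.

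I expect this transport statement to be the main obstacle, and I would prove it by producing the witnessing equivalence relation explicitly. Let $\mathcal{E}$ be an equivalence relation on $[X]\ltimes X$ to which $ku$ is Bourn-normal, and let $\mathcal{F}$ be the meet, in the lattice of equivalence relations on $A$, of the kernel pair of $\alpha$ with the inverse image $(g\times g)^{-1}\mathcal{E}$; both are equivalence relations, hence so is $\mathcal{F}$. Since normalization preserves such meets in a protomodular category, the zero class of $\mathcal{F}$ is the intersection of $\ker\alpha=\kappa(X)$ with the $g$-preimage of the zero class of $\mathcal{E}$. As $g$ restricts to the monomorphism $k$ on $\kappa(X)$, and the zero class of $\mathcal{E}$ meets $k(X)$ exactly in $ku(S)=k(u(S))$, this intersection is $\kappa u(S)$. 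Thus $\kappa u$ is the normalization of $\mathcal{F}$, i.e. Bourn-normal. The delicate points here are the existence of these meets and the compatibility of normalization with meets and inverse images, all of which are available in the protomodular setting.

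For (ii) I would first invoke (i) to get that $ku\colon S\to[X]\ltimes X$ is Bourn-normal, exhibiting $S$ as a normal subobject of $[X]\ltimes X$ lying inside the kernel $X$. Restricting the universal split extension to $S$ then yields a split extension $S\to[X]\ltimes S\to[X]$ together with a monomorphism $j\colon[X]\ltimes S\to[X]\ltimes X$ of split extensions over $1_{[X]}$ that is $u$ on kernels (concretely $[X]\ltimes S$ is the subobject of $[X]\ltimes X$ saturating the section $i$ under the relation normalizing $ku$). Regarded as a split extension in $\C^\two$ with kernel $(S,X,u)$ whose codomain is the generic split extension of $X$, this is my candidate for the generic object. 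To check universality, take any split extension in $\C^\two$ with kernel $(S,X,u)$: the universal property of $[X]$ pins down the bottom row uniquely, and the resulting map $A_0\to[X]\ltimes X$ carries the kernel into $[X]\ltimes S$ (where $ku$ lands) and the section into $i([X])\subseteq[X]\ltimes S$. Since the kernel and section of a split extension are jointly strongly epimorphic in a protomodular category and $j$ is a monomorphism, this map factors uniquely through $[X]\ltimes S$, giving the unique comparison. Hence $[S,X,u]\cong[X]$ with $q_2$ the identity, which proves both existence and that $q_2$ is an isomorphism.

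Finally, for the converse under strong protomodularity I again reduce by (i) to showing that $ku$ is Bourn-normal. Using that $q_2$ is an isomorphism, the generic split extension in $\C^\two$ becomes a morphism of split extensions over $1_{[X]}$
\[
\xymatrix{
S\ar[d]_{u}\ar[r]^-{k_S} & [X]\ltimes S\ar[d]^{j}\ar@<0.5ex>[r]^-{p_1} & [X]\ar@{=}[d]\ar@<0.5ex>[l]^-{i}\\
X\ar[r]_-{k} & [X]\ltimes X\ar@<0.5ex>[r]^-{p_1} & [X]\ar@<0.5ex>[l]^-{i}
}
\]
with $jk_S=ku$. This is exactly the situation governed by strong protomodularity, which equates Bourn-normality of the left-hand vertical $u$ with that of its composite $ku$ into the bottom kernel; so it suffices to see that $u$ itself is Bourn-normal. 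I would obtain this by pulling the top split extension back along the conjugation morphism $\conj_X\colon X\to[X]$ of \eqref{diagram:conjugation_morphism}: since the displayed right-hand square is a pullback and $j$ is a monomorphism, the pullback sits as a reflexive, hence by the Mal'tsev property an equivalence, relation on $X$ whose kernel-of-first-projection composed with the second projection is precisely $u$, so $u$ is Bourn-normal. Strong protomodularity then upgrades this to $ku$ Bourn-normal, and (i) delivers that $u$ is characteristic. I expect the genuinely load-bearing use of strong protomodularity to be exactly this passage from normality of $u$ (equivalently, of the kernel $k_S$) to normality of $ku$, which is the feature that can fail in a merely protomodular category.
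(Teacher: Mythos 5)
Your parts (i) and (ii) are correct but travel a genuinely different road from the paper's. For the converse half of (i), the paper maps the given split extension into the generic one, notes that $\langle f,\alpha\rangle\colon A\to([X]\ltimes X)\times B$ is a monomorphism and that $\langle ku,0\rangle$ is Bourn-normal (an intersection of Bourn-normal monomorphisms in the product), and then transports normality back along that monomorphism; your version never leaves $A$: you build the witnessing equivalence relation as the meet of the kernel pair of $\alpha$ with $(g\times g)^{-1}\mathcal{E}$ and compute its zero class. The points you flag as delicate are in fact only pullback pasting plus the fact that the zero class of an equivalence relation is Bourn-normal to it, so your argument is sound and arguably more self-contained. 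For (ii) the divergence is larger: the paper constructs the same object you do (the pullback along the section $i$ of the relation normalizing $ku$) but certifies its universal property by citing the normalizer calculus of Bourn--Gray (Proposition 2.4 and Lemma 2.7 there) together with Lemma 2.6 of Gray's 2017 paper on $K$-precartesian morphisms, whereas you verify terminality in the fiber by hand, pinning the bottom components by the universal property of $[X]$ and the top components by joint strong epimorphy of kernel and section against the monomorphism $j$. That verification is routine and correct; you trade the paper's citations for diagram chasing.

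The converse of (ii) is where the genuine problem sits, and your write-up actually exposes a gap in the paper itself: with strong protomodularity stated as ``$f$ is Bourn-normal if and only if $\sigma f$ is,'' concluding that $ku$ is Bourn-normal requires first knowing that $u$ is Bourn-normal, a step the paper's one-line proof never supplies; your pullback of the top row along $\conj_X$, matched against the pullback square in \eqref{diagram:conjugation_morphism}, is exactly the right device to produce a reflexive (hence, by the Mal'tsev property, equivalence) relation on $X$ with zero class $u$. But this device needs $j$ --- equivalently $u$ --- to be a monomorphism, or the pullback is not a relation on $X$ at all, and nothing in the hypotheses gives you that. The omission cannot be repaired: in $\mathbf{Grp}$, which is strongly protomodular, take $u=0\colon\z{2}\to 0$; the generic split extension of $(\z{2},0,0)$ in $\mathbf{Grp}^\two$ exists (top row $\z{2}\to\z{2}\to 0$, bottom row $0\to 0\to 0$, since $\Aut(\z{2})$ and $\Aut(0)$ are trivial) and $q_2$ is an isomorphism of trivial groups, yet $u$ is not even a monomorphism, let alone characteristic. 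So the converse as literally stated is false; both your proof and the paper's tacitly assume $u$ is a monomorphism. With that hypothesis added your argument, unlike the paper's, is complete.
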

\begin{proof} Suppose $u:S\to X$ is a morphism in $\C$. By definition, if $u$
is a characteristic monomorphism,
then $ku: S\to [X]\ltimes X$ is Bourn-normal. Conversely, suppose
$ku: S\to [X]\ltimes X$ is Bourn-normal and $\kappa : X\to A$ is the kernel
of a split epimorphism $\alpha:A\to B$ with splitting $\beta : B\to A$.
Accordingly there is a unique morphism of split extensions
\begin{equation}
\label{diagram:unique_morphism_to_generic}
\vcenter{
\xymatrix{ 
X\ar@{=}[d] \ar[r]^-{\kappa} & A \ar[d]^{f} \ar@<0.5ex>[r]^-{\alpha} &
B\ar@<0.5ex>[l]^-{\beta}\ar[d]^{g}\\ X \ar[r]^-{k} & [X]\ltimes X \ar@<0.5ex>[r]^-{p_1} &
[X].\ar@<0.5ex>[l]^-{i} }
}
\end{equation}
By protomodularity the right and downward directed arrows of the right hand
square of \eqref{diagram:unique_morphism_to_generic} form a pullback and
hence the morphism $\langle f, \alpha \rangle : A\to ([X]\ltimes X)\times B$
is a monomorphism. Therefore, since
$\langle f,\alpha \rangle \kappa u = \langle ku, 0\rangle$ which is a
Bourn-normal monomorphism (being the intersection of the Bourn-normal monomorphisms
$1\times ku$ and $\langle 1,0\rangle$ - see e.g.{} Proposition 3.2.6 of
\cite{BORCEUX_BOURN:2007}) it follows that $\kappa u$ is a Bourn-normal
monomorphism as desired.
Now suppose $u$ is a characteristic monomorphism, $H=[X]\ltimes X$ 
and $(\bar T,\bar t_1, \bar t_2)$ is the equivalence relation that
$ku$ is the zero class of.
 Since $\C$ is protomodular there is a unique (up to isomorphism) equivalence
 relation to which
 $ku$ is normal. This means that $(ku, (\bar T,\bar t_1,\bar t_2))$
is the normalizer of $ku$ (in the sense of \cite{BOURN_GRAY:2015}) and hence
by Proposition 2.4 of \cite{BOURN_GRAY:2015} the front face of the pullback
of split extensions
\[ 
\xymatrix@!@C=-1.8ex@R=-1.8ex{
S \ar[dr]^{1_S} \ar[rr]^{l'} \ar[dd]_{u} && T'
\ar@<0.5ex>[rr]^(0.6){t'_1}\ar[dd]_(0.25){v'}|(0.5){\hole}\ar[dr]^{w'} && [X]
\ar@<0.5ex>[ll]^(0.4){f'}\ar[dr]^{i}
\ar@{=}[dd]|(0.45){\hole}|(0.55){\hole}\\
& S \ar[rr]^(0.3){\bar l}\ar[dd]_(0.3){ku} &&\bar T \ar@<0.5ex>[rr]^(0.3){\bar
t_1}\ar[dd]^(0.3){\langle \bar t_1,\bar t_2\rangle} && H
\ar@<0.5ex>[ll]^(0.7){\bar f}\ar@{=}[dd]\\
X \ar[rr]_(0.3){
k}|(0.5){\hole}\ar[dr]_{k} && H
\ar@<0.5ex>[rr]^(0.3){p_1}|(0.5){\hole}\ar[dr]_(0.4){\langle ip_1,1\rangle}
&&
[X]\ar@<0.5ex>[ll]|(0.5){\hole}^(0.7){i} \ar[dr]^{i}\\ & H \ar[rr]_{\langle 0
,1\rangle} && H\times H \ar@<0.5ex>[rr]^{\pi_1} && H\ar@<0.5ex>[ll]^{\langle
1,1\rangle}
}
\]
is a $K$-precartesian. However, by Lemma 2.7 of \cite{BOURN_GRAY:2015} this
means that the back face is also $K$-precartesian and hence by
Lemma 2.6 of \cite{GRAY:2017} is the generic split extension with kernel
$(S,X,u)$ in $\C^\two$.

To see that the final claim follows from strong protomodularity.
Just note that in the
diagram \eqref{generic_split_ext_in_mor_cat} with $f=u$ and $q_2$ an
isomorphism, strong protomodularity implies that the composite $kf$ is
Bourn-normal. 
\end{proof}
 
\begin{theorem}
\label{theorem:char_of_centerless_char}
Let $\C$ be a pointed
protomodular category, and let $X$ be an object in
$\C$ with trivial center such that the generic split extension with kernel $X$
exists. A morphism $u:S\to X$ is a characteristic monomorphism if and only if
the composite $\conj_Xu: S\to [X]$ is Bourn-normal.  
\end{theorem}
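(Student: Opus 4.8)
The plan is to establish the two implications separately, leaning on Theorem~\ref{theorem:char_of_char}(i) together with the consequences of trivial center recorded in Proposition~\ref{proposition:X_centerless_c_X_centralizer_free}: that $\conj_X\colon X\to[X]$ is itself a Bourn-normal monomorphism, and that every $X$-groupoid is an equivalence relation.

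The forward implication is immediate. If $u$ is a characteristic monomorphism, then by definition its composite with \emph{any} Bourn-normal monomorphism out of $X$ is again Bourn-normal; applying this to the Bourn-normal monomorphism $\conj_X$ shows at once that $\conj_X u$ is Bourn-normal.

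For the converse I would reduce, via Theorem~\ref{theorem:char_of_char}(i), to showing that the composite $ku\colon S\to[X]\ltimes X$ is Bourn-normal, where $k$ is the kernel of $p_1$. The guiding identity is that $p_1k=0$ while $p_2k=\conj_X$, so that
\[
\langle p_1,p_2\rangle\,ku=\langle 0,\conj_X u\rangle=\langle 0,1\rangle\,\conj_X u .
\]
Here $\langle 0,\conj_X u\rangle$ is the intersection of the two Bourn-normal monomorphisms $\langle 0,1\rangle\colon[X]\to[X]\times[X]$ and $1_{[X]}\times\conj_X u$, and so is itself Bourn-normal, exactly as in the proof of Theorem~\ref{theorem:char_of_char} (using Proposition~3.2.6 of~\cite{BORCEUX_BOURN:2007}); fix an equivalence relation $E$ on $[X]\times[X]$ to which it is normal. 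Since $X$ has trivial center, the terminal $X$-groupoid is an equivalence relation (Proposition~\ref{proposition:X_centerless_c_X_centralizer_free}(ii)), so $\langle p_1,p_2\rangle\colon[X]\ltimes X\to[X]\times[X]$ is a monomorphism. I would then invoke the reflection principle that, for a monomorphism $m\colon A\to B$, an equivalence relation $E$ on $B$, and a morphism $n\colon S\to A$ with $mn$ normal to $E$, the morphism $n$ is normal to the inverse-image relation $m^{*}E$ on $A$ (the pullback of $\langle e_1,e_2\rangle\colon E\to B\times B$ along $m\times m$). Taking $m=\langle p_1,p_2\rangle$ and $n=ku$ exhibits $ku$ as Bourn-normal, and Theorem~\ref{theorem:char_of_char}(i) finishes the converse.

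The one step requiring care is the reflection principle itself. Writing $\widetilde{mn}\colon S\times S\to E$ for the map witnessing that $mn$ is normal to $E$, one builds $\widetilde{n}\colon S\times S\to m^{*}E$ whose component into $A\times A$ is $n\times n$ and whose component into $E$ is $\widetilde{mn}$; one then checks, by pasting the candidate square for $n$ and $m^{*}E$ onto the defining pullback of $m^{*}E$ and cancelling, that it is a pullback, the cancellation using only that $m$, and hence $1\times m$, is monic. The substantive point --- and the sole place the centrality hypothesis is used in the converse --- is that $\langle p_1,p_2\rangle$ is a monomorphism; the inverse-image construction is precisely what converts this bare monomorphism into usable normality data, and I expect the pullback-pasting bookkeeping to be the most delicate part of the argument.
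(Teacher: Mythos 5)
Your proof is correct and follows essentially the same route as the paper's: reduce to Theorem~\ref{theorem:char_of_char}(i) via the morphism of split extensions whose middle component $\langle p_1,p_2\rangle$ is a monomorphism (a consequence of the trivial center), write $\langle p_1,p_2\rangle ku=\langle 0,\conj_X u\rangle$ as an intersection of Bourn-normal monomorphisms, and reflect Bourn-normality back along the monomorphism $\langle p_1,p_2\rangle$. The only difference is one of detail: the paper asserts this last reflection step without proof, whereas you verify it explicitly via the inverse-image equivalence relation (and you justify monicity of $\langle p_1,p_2\rangle$ by Proposition~\ref{proposition:X_centerless_c_X_centralizer_free}(ii) rather than directly by protomodularity), both of which are sound.
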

\begin{proof}
Since $X$ has trivial center, the morphism $\conj_X$ is monomorphism and hence
(by protomodularity)
so is
the middle morphism $\langle p_1,p_2\rangle$ in the morphism 
\[ \xymatrix@R=5ex{ X
\ar[r]^-{k}\ar[d]_{\conj_X} & [X]\ltimes X \ar@<0.5ex>[r]^-{p_1}\ar[d]^{\langle
p_1,p_2\rangle} & [X]\ar@<0.5ex>[l]^-{i}\ar@{=}[d]\\ [X] \ar[r]^-{\langle
0,1\rangle} & [X]\times [X] \ar@<0.5ex>[r]^-{\pi_1} & [X]
\ar@<0.5ex>[l]^-{\langle 1,1\rangle} } \] of split extensions. Since $\langle 0,
\conj_X u\rangle = \langle p_1,p_2\rangle k u$ is a Bourn-normal monomorphism and
$\langle p_1,p_2\rangle$ is a monomorphism, it follows that $ku$ is a
Bourn-normal monomorphism. The claim now follows by the previous proposition.
\end{proof}
\begin{proposition}
\label{proposition: normal_subs_of_pinj_char}
Let $\C$ be a pointed protomodular category. Every normal
subobject of a \pinj{} object is characteristic.  
\end{proposition}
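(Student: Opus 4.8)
The plan is to combine the reduction of characteristicness to protosplit monomorphisms provided by Proposition \ref{proposition:equiv_char} with the product decomposition of split extensions having \pinj{} kernel furnished by Lemma \ref{lemma:split_extensions_are_products}. Let $m : S \to X$ denote the inclusion of a normal subobject of a \pinj{} object $X$ in a pointed protomodular category $\C$.

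Since a pointed protomodular category is in particular a pointed finitely complete Mal'tsev category, Proposition \ref{proposition:equiv_char} applies, so it suffices to verify its condition (b): for every protosplit monomorphism $\kappa : X \to A$ (automatically normal, being a kernel of a split epimorphism), the composite $\kappa m$ is Bourn-normal. I would therefore fix such a $\kappa$, realized as the kernel of a split epimorphism $\alpha : A \to B$ with chosen splitting $\beta$.

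Because $X$ is \pinj{}, Lemma \ref{lemma:split_extensions_are_products} supplies a morphism $\theta : B \to X$ and an isomorphism of split extensions $\psi : A \to B\times X$ with $\psi\kappa = \langle 0,1\rangle$. Consequently $\psi\kappa m = \langle 0,1\rangle m = \langle 0,m\rangle : S \to B\times X$, and hence $\kappa m = \psi^{-1}\langle 0,m\rangle$. The decisive observation, which is the heart of the argument, is that $\langle 0,m\rangle$ is itself a kernel: writing $m$ as the kernel of a morphism $f : X \to Q$ (possible precisely because $S$ is a \emph{normal} subobject), and using that kernels commute with finite products, one has $\ker(1_B\times f) = \ker(1_B)\times\ker(f) = 0\times S$ with inclusion exactly $\langle 0,m\rangle$. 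Thus $\kappa m = \psi^{-1}\langle 0,m\rangle$ is the kernel of $(1_B\times f)\psi$, that is, a normal monomorphism, and every normal monomorphism is Bourn-normal (it is normal to the kernel pair of the morphism it is a kernel of, as noted after diagram \eqref{diagram:bourn_normal}). This establishes condition (b) and hence the proposition.

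I do not expect a serious obstacle here, as the argument is short; the genuine content lies in identifying $\langle 0,m\rangle$ with $\ker(1_B\times f)$, which is exactly where the normality of $S$ in $X$ is used (mere Bourn-normality of $m$ would not hand us a morphism $f$ with $m=\ker f$ to form the product). The remaining points demand only routine care: confirming that protomodularity delivers the finite completeness and the Mal'tsev property needed by Proposition \ref{proposition:equiv_char} and Lemma \ref{lemma:split_extensions_are_products}, and that passing across the isomorphism $\psi$ preserves the relevant notion of normality.
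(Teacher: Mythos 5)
Your proof is correct and follows essentially the same route as the paper's: both arguments reduce, via Proposition \ref{proposition:equiv_char}, to checking that $\kappa m$ is Bourn-normal for every protosplit monomorphism $\kappa : X \to A$, and both use Lemma \ref{lemma:split_extensions_are_products} to replace $\kappa$, up to the isomorphism $\psi$, by $\langle 0,1\rangle : X \to B\times X$. The only divergence is the final step: the paper simply observes that $\langle 0,n\rangle$ is Bourn-normal whenever $n$ is (a product of Bourn-normal monomorphisms is Bourn-normal), so its argument in fact covers \emph{Bourn-normal} subobjects, whereas your identification $\langle 0,m\rangle = \ker(1_B\times f)$ genuinely uses that $m$ is a kernel. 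That is sufficient for the statement as given, but note that your parenthetical claim that mere Bourn-normality of $m$ would not do applies only to your particular argument, not to the proposition itself, which the paper's proof establishes in the Bourn-normal generality.
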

\begin{proof} Let $X$ be a \pinj{} object and let $n: S\to X$ be a Bourn-normal
monomorphism. If $\kappa : X\to A$ is a protosplit monomorphism, then according
to Lemma 
\ref{lemma:split_extensions_are_products}
 there is an isomorphism
$\psi: B\times X \to A$, where $B$ is the (object part of the) cokernel of
$\kappa$, such that $\kappa = \psi \langle 0,1\rangle$. Since $\langle
0,n\rangle$ is Bourn-normal it follows that $\kappa n = \psi \langle 0,1\rangle
n$ is Bourn-normal.  
\end{proof}
 
\begin{theorem}
\label{theorem:one_step}
Let $\C$ be a pointed protomodular category and let
$X$ be an object in $\C$ such that the generic split extension with kernel
$X$ exists.
There is a generic split extension with kernel $[X]$ and the conjugation morphism $\conj_X : X \to
[X]$ is a characteristic monomorphism if and only if 
$X$ has trivial center and $[X]$ is \scmp{}.
\end{theorem}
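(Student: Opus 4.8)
The plan is to prove the two directions separately, in each case reducing the assertion about $\conj_X$ to the completeness characterisations already at hand. Throughout I will use that the kernel of $\conj_X$ is the center $z_X\colon Z(X)\to X$, and that by Theorem~\ref{theorem:char_of_scmp_in_action_rep_cat} an object is \scmp{} precisely when its conjugation morphism is an isomorphism.

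For the ``if'' direction, assume $X$ has trivial center and $[X]$ is \scmp{}. Theorem~\ref{theorem:char_of_scmp_in_action_rep_cat} then yields at once both the existence of the generic split extension with kernel $[X]$ and the fact that $\conj_{[X]}\colon[X]\to[[X]]$ is an isomorphism, which is half of the conclusion. For the other half I would apply Theorem~\ref{theorem:char_of_centerless_char} with its centerless object taken to be $[X]$: by Proposition~\ref{proposition:X_centerless_c_X_centralizer_free} the triviality of $Z(X)$ forces both that $[X]$ has trivial center (so the hypothesis of that theorem holds) and that $\conj_X$ is a Bourn-normal monomorphism. Theorem~\ref{theorem:char_of_centerless_char} then says $\conj_X$ is characteristic if and only if $\conj_{[X]}\conj_X$ is Bourn-normal; since $\conj_{[X]}$ is an isomorphism and Bourn-normality is stable under post-composition with an isomorphism (transport the defining equivalence relation along it), the composite is Bourn-normal and $\conj_X$ is characteristic.

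For the ``only if'' direction, assume the generic split extension with kernel $[X]$ exists and $\conj_X$ is characteristic. Triviality of $Z(X)$ comes almost for free: $1_X$ is Bourn-normal, being $r_2k$ for the indiscrete equivalence relation on $X$, so applying the defining property of a characteristic morphism to $n=1_X$ shows $\conj_X$ is itself Bourn-normal, hence a monomorphism, whence $Z(X)=\ker(\conj_X)=0$. The substantial content is that $[X]$ is \scmp{}, i.e.\ that $\conj_{[X]}$ is an isomorphism. Here I would combine two inputs. First, Theorem~\ref{theorem:char_of_char}(ii) applied to $u=\conj_X\colon X\to[X]$ produces the generic split extension with kernel $(X,[X],\conj_X)$ in $\C^\two$ and tells us its classifier morphism $q_2\colon[X,[X],\conj_X]\to[[X]]$ is an isomorphism. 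Second, now that $X$ has trivial center, Proposition~\ref{proposition:X_centerless_[c_X]=c_[X]} exhibits a generic split extension of $\conj_X$ in $\C^\two$ whose classifier morphism is exactly $\conj_{[X]}$. By uniqueness of generic split extensions these two classifier objects of $\C^\two$ are isomorphic in $\C^\two$; writing such an isomorphism as a pair $(\psi,\chi)$ of isomorphisms, its commutativity gives $\conj_{[X]}=\chi\,q_2\,\psi^{-1}$, a composite of isomorphisms, so $\conj_{[X]}$ is an isomorphism and $[X]$ is \scmp{} by Theorem~\ref{theorem:char_of_scmp_in_action_rep_cat}.

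The step I expect to be the main obstacle is exactly this last identification: matching the abstract classifier $q_2$ delivered by Theorem~\ref{theorem:char_of_char}(ii) with the concrete classifier $\conj_{[X]}$ produced by Proposition~\ref{proposition:X_centerless_[c_X]=c_[X]}. Both are generic, hence terminal, split extensions of the \emph{same} object $(X,[X],\conj_X)$ of $\C^\two$, so an isomorphism between them is forced by terminality; the care needed is only to check that the unique comparison isomorphism of split extensions in $\C^\two$ restricts to the classifier parts and thereby carries $q_2$ to $\conj_{[X]}$. Once that identification is secured, invertibility of $\conj_{[X]}$ is a formal consequence of invertibility of $q_2$. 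Everything else—Bourn-normality of $1_X$, stability of Bourn-normality under isomorphism, and the passage from ``$\conj_X$ a monomorphism'' to ``$Z(X)=0$''—is routine.
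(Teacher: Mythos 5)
Your proof is correct, and its ``only if'' half is essentially the paper's own argument: both invoke Theorem \ref{theorem:char_of_char}(ii) for $u=\conj_X$ to obtain the generic split extension of $(X,[X],\conj_X)$ in $\C^\two$ with $q_2$ invertible, then Proposition \ref{proposition:X_centerless_[c_X]=c_[X]} to identify that classifier with $([X],[[X]],\conj_{[X]})$, and conclude via Theorem \ref{theorem:char_of_scmp_in_action_rep_cat}; your extra care with the comparison isomorphism $(\psi,\chi)$ between the two terminal objects merely makes explicit what the paper compresses into the phrase ``the split extension classifier of $(X,[X],\conj_X)$ is $([X],[[X]],\conj_{[X]})$''. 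Where you genuinely diverge is the ``if'' direction. The paper gets it in one line from Proposition \ref{proposition: normal_subs_of_pinj_char}: since $[X]$ is \scmp{}, hence \pinj{}, and $\conj_X$ is a Bourn-normal monomorphism by Proposition \ref{proposition:X_centerless_c_X_centralizer_free}, it is characteristic because every normal subobject of a \pinj{} object is. You instead apply Theorem \ref{theorem:char_of_centerless_char} to the centerless object $[X]$, reducing characteristicity of $\conj_X$ to Bourn-normality of $\conj_{[X]}\conj_X$, which you obtain from invertibility of $\conj_{[X]}$ together with stability of Bourn-normality under post-composition with isomorphisms. Both routes are legitimate and non-circular (all cited results precede the theorem in the paper); the paper's is slightly more economical, needing only \pinj{}-ness of $[X]$ rather than the full strength of $\conj_{[X]}$ being an isomorphism, while yours has the small advantage of bypassing Proposition \ref{proposition: normal_subs_of_pinj_char} entirely.

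One cosmetic slip: in the ``only if'' direction, the Bourn-normal monomorphism against which you test the characteristic property of $\conj_X$ must be $1_{[X]}$, Bourn-normal to the indiscrete equivalence relation on $[X]$, not $1_X$; as written, $n=1_X$ does not have domain equal to the codomain of $\conj_X$, so the composite $n\conj_X$ is not defined. With that subscript corrected, the argument that $Z(X)=\Ker(\conj_X)=0$ is exactly right.
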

\begin{proof} Since (i) the morphism $\conj_X$ is a Bourn-normal monomorphism
if and only if $X$ has trivial center (for the ``if'' part use Proposition
\ref{proposition:X_centerless_c_X_centralizer_free}
 , and for the converse just recall that the center is the kernel of $c_X$); (ii) by Theorem
\ref{theorem:char_of_scmp_in_action_rep_cat}
if $[X]$ is \scmp{}, then there is a generic split extension with
kernel $[X]$, it follows that in addition to the assumptions above we may
assume that $X$
has trivial
center and that there is a generic split extension with kernel $[X]$, and
prove that $\conj_X$ is a characteristic monomorphism if and only if $[X]$ is
\scmp{}.
Suppose that $\conj_X$ is a characteristic monomorphism. Then according to
Theorem
\ref{theorem:char_of_char}
the generic split extension with kernel $(X,[X],\conj_X)$ exists in $\C^\two$. By
Proposition
\ref{proposition:X_centerless_[c_X]=c_[X]}
this means that the split extension classifier of
$(X,[X],\conj_X)$ is $([X],[[X]],\conj_{[X]})$ and so
by Theorem
\ref{theorem:char_of_char}
(again)
the morphism $\conj_{[[X]]}$ is an isomorphism. Therefore, $[X]$ is \scmp{} by
Theorem
\ref{theorem:char_of_scmp_in_action_rep_cat}.
The converse follows from Proposition
\ref{proposition: normal_subs_of_pinj_char}.
\end{proof}

We obtain the following known fact as a corollary:
\begin{proposition}
If a group $X$ is characteristically simple (i.e. it has no proper characteristic subobjects)
 and non-abelian, then $\Aut(X)$ is \inj{} (=\scmp{}).
\end{proposition}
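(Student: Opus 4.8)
The plan is to deduce the statement from Theorem~\ref{theorem:one_step}, working in the category of groups, which is semi-abelian and hence pointed protomodular, has $[X]=\Aut(X)$, and in which Bourn-normal monomorphisms are exactly the normal ones. Since groups form an action representable category, the generic split extension with kernel $[X]=\Aut(X)$ exists automatically, so by the right-to-left reading of Theorem~\ref{theorem:one_step} it suffices to verify its left-hand side, namely that $\conj_X\colon X\to\Aut(X)$ is a characteristic monomorphism; feeding this in will then yield that $[X]=\Aut(X)$ is \scmp{}. Along the way I first record that $X$ has trivial center: the center $Z(X)$ is a characteristic subobject, and since $X$ is characteristically simple and non-abelian (so $Z(X)\neq X$), we must have $Z(X)=0$.

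Next I would unwind the remaining condition group-theoretically. By the standard equivalence recalled in this section (a subobject $S$ of $G$ is characteristic precisely when it stays normal in every group in which $G$ is normal), saying that $\conj_X$ is a characteristic monomorphism is the same as saying that the inner automorphism group $\Inn(X)$, the image of $\conj_X$, is a characteristic subgroup of $A:=\Aut(X)$. As $Z(X)=0$, the map $\conj_X$ is injective, so I identify $X$ with $\Inn(X)\trianglelefteq A$; under this identification the conjugation action of $A$ on $\Inn(X)$ coincides with the tautological action of $\Aut(X)$ on $X$. Consequently the subgroups of $\Inn(X)$ that are normal in $A$ are exactly the characteristic subgroups of $X$, so characteristic simplicity says precisely that $\Inn(X)$ is a \emph{minimal} normal subgroup of $A$. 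Moreover, by Proposition~\ref{proposition:X_centerless_c_X_centralizer_free} the morphism $\conj_X$ has trivial centralizer, which here reads $C_A(\Inn X)=0$.

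The heart of the argument, and the only step I expect to require real work, is to promote minimality to full characteristicity of $\Inn(X)$ in $A$. Given $\psi\in\Aut(A)$, put $N=\Inn(X)$ and $M=\psi(N)$; since $\psi$ preserves the lattice of normal subgroups and their minimality, $M$ is again a minimal normal subgroup of $A$ with $C_A(M)=\psi(C_A(N))=0$. As $M$ and $N$ are both normal, $[M,N]\le M\cap N$; if $M\cap N=0$ then $[M,N]=0$, forcing $M\le C_A(N)=0$, which contradicts $M\cong N\neq 0$. Hence $M\cap N\neq 0$ is a nonzero normal subgroup of $A$ contained in each of the minimal normal subgroups $M$ and $N$, so minimality gives $M=M\cap N=N$, i.e. $\psi(\Inn X)=\Inn X$. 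Thus $\Inn(X)$ is characteristic in $\Aut(X)$ and $\conj_X$ is a characteristic monomorphism. Theorem~\ref{theorem:one_step} then shows that $[X]=\Aut(X)$ is \scmp{}, and since for groups \scmpness{} coincides with \injness{} (Baer's theorem, recovered via Theorem~\ref{theorem:char_of_scmp_in_action_rep_cat}) this is exactly the asserted conclusion.
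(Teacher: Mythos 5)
Your proposal is correct and follows essentially the same route as the paper: both proofs reduce, via Theorem \ref{theorem:one_step}, to showing that $\conj_X$ is a characteristic monomorphism, and both establish this by intersecting $\Inn(X)$ with its image under an arbitrary automorphism of $\Aut(X)$, using the trivial centralizer of $\conj_X$ (Proposition \ref{proposition:X_centerless_c_X_centralizer_free}) to see that the intersection is nonzero and characteristic simplicity to force it to be all of $\Inn(X)$. The only difference is presentational: where you argue with minimal normal subgroups and commutators in classical group-theoretic language, the paper phrases the same intersection as a categorical pullback and cites Theorem \ref{theorem:char_of_centerless_char} for exactly the dictionary you set up by hand, namely that subgroups of $\Inn(X)$ normal in $\Aut(X)$ correspond to characteristic subgroups of $X$.
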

\begin{proof}
Suppose that $X$ is a characteristically simple non-abelian group and suppose
 that $\theta$ is an automorphism
 of $\Aut(X)$. Since the center is always characteristic it follows that $Z(X)=0$
 and hence $c_X$ is a normal monomorphism.
 Forming the pullback
\[
\xymatrix@C=4.5ex@R=4ex{
K \ar[r]^{u}\ar[d]_{v} & X\ar[d]^{\conj_X}\\
X\ar[r]_-{\theta \conj_X} & \Aut(X)
}
\]
we see that $\conj_Xu$ is normal and hence by Theorem
\ref{theorem:char_of_centerless_char}
that $u$ is characteristic. By assumption this means that
either $K=0$ or $u$ is an isomorphism. However, since by Proposition
\ref{proposition:X_centerless_c_X_centralizer_free} $\conj_X$ has trivial
centralizer it follows that $K$ is not trivial and hence $u$ must be an isomorphism.
Essentially the same argument implies that $v$ is an isomorphism which
proves that $\conj_X$ is characteristic and hence by the previous theorem
implies that $\Aut(X)$ is complete.
\end{proof}

\begin{proposition} Let $\C$ be a category of interest in the sense of G. Orzech
 \cite{ORZECH:1972}
 such that the group operation (required to exists) is commutative,
and let $X$ be an object in $\C$.
\begin{enumerate}[(i)]
\item If $X$ is perfect, then every normal monomorphism with domain $X$ is a
characteristic monomorphism;  
\item if $X$ is perfect, has trivial center, and the generic split extension with
kernel $X$ exists, then $[X]$ is \scmp.
\end{enumerate} 
\end{proposition}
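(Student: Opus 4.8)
The plan for part~(i) is to reduce to a statement about nested normal subobjects and then exploit perfectness. Writing $n : X\to W$ for the given normal monomorphism, Proposition~\ref{proposition:equiv_char} tells me it suffices to check that for every protosplit normal monomorphism $\kappa : W\to A$ the composite $\kappa n : X\to A$ is Bourn-normal; that is, I am reduced to showing that if $X$ is perfect, $X$ is normal in $W$, and $W$ is normal in $A$, then $X$ is normal in $A$. Since $\C$ is a category of interest, normality of a subobject is exactly the assertion that it is a two-sided ideal for the non-additive bilinear operations, and perfectness says $X = X\star X$, the subobject generated by the products $x\star x'$. The first observation is that, because $W$ is an ideal of $A$ and $X\le W$, every product $a\star x$ with $a$ in $A$ and $x$ in $X$ already lies in $W$. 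The decisive observation is that each generator $a\star(x\star' x')$ of $A\star(X\star X)$ can be rewritten, using the defining identities of a category of interest, as a sum of products of products in each of which an innermost factor has the form $a\star''(\text{an element of }X)$; such a factor lies in $W$ by the first observation, and the remaining outer product of a $W$-element with an $X$-element lies in $X$ because $X$ is an ideal of $W$. Hence $A\star X = A\star(X\star X)\subseteq X$, and symmetrically $X\star A\subseteq X$, so $X$ is an ideal of $A$ and $\kappa n$ is normal. The commutativity of the underlying group operation is precisely what guarantees that this commutator calculus is governed entirely by the bilinear operations (it is exactly what rules out the non-commutative group phenomenon, where a perfect direct factor can be interchanged by an automorphism of the ambient object). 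I expect the careful bookkeeping of these identities, ensuring every term of the expansion genuinely contains such an inner factor, to be the main technical point of~(i).

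For part~(ii) the plan is to feed~(i) into the machinery of Section~\ref{section:trivial_center_and_centralizer}. Since $X$ has trivial center, Proposition~\ref{proposition:X_centerless_c_X_centralizer_free} gives that $\conj_X : X\to[X]$ is a Bourn-normal monomorphism with trivial centralizer and that $[X]$ has trivial center. As $X$ is perfect and $\conj_X$ is a normal monomorphism with domain $X$, part~(i) shows $\conj_X$ is characteristic. I then claim $[X]$ is \pinj{}. Let $\kappa : [X]\to A$ be any protosplit monomorphism; being Bourn-normal, composing with the characteristic $\conj_X$ shows that $\kappa\conj_X : X\to A$ is Bourn-normal, so $X$ is normal in $A$. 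The conjugation action of $A$ on its normal subobject $X$ is classified, by the universal property of the split extension classifier $[X]$ (which exists by hypothesis), by a unique morphism $\Lambda : A\to[X]$. Restricting this action along $\kappa$ recovers the conjugation action of $[X]$ on $X$, which is the tautological action $\tau$ classified by $1_{[X]}$; by uniqueness $\Lambda\kappa = 1_{[X]}$, so $\kappa$ is a split monomorphism and $[X]$ is \pinj{}. Finally, $[X]$ being \pinj{} with trivial center is \scmp{} by Proposition~\ref{proposition:proto-complete+centerless=strong-complete}.

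This route has the advantage of not requiring the generic split extension with kernel $[X]$ to exist, which is significant because categories of interest are only action accessible, not action representable; it thereby reaches the same conclusion as Theorem~\ref{theorem:one_step} without presupposing the split extension classifier of $[X]$. The point that needs the most care in~(ii) is the identification of the restricted action: one must verify that the conjugation action of $A$ on $X$, pulled back along $\kappa : [X]\to A$, genuinely coincides with the intrinsic conjugation action of $[X]$ on its normal subobject $X$ (equivalently, the action $\tau$ of the generic split extension), so that its classifying map is $1_{[X]}$. Because $\kappa$ is a homomorphism and both $X$ and $[X]$ sit inside $A$ with $X\le[X]$, this conjugation is computed inside $[X]$ and the identification is the expected one; rendering this precise in the internal language is the only delicate step of~(ii), and it is exactly parallel to the reasoning underlying Proposition~\ref{proposition:X_centerless_c_X_centralizer_free}.
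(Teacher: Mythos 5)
Your part (i) is essentially the paper's own proof: both arguments reduce the claim to the transitivity statement (if $X$ is perfect, $X$ is normal in $W$ and $W$ is normal in $A$, then $X$ is normal in $A$), invoke the category-of-interest axiom rewriting $(x_1*x_2)\sqbullet a$ as a term in elements of the form $x_i*_j(x_k\sqbullet_j a)$, observe that each inner factor lies in $W$ and hence each outer product lies in $X$, and finally use perfectness together with distributivity over the commutative addition to reduce to generators. Your preliminary reduction through Proposition~\ref{proposition:equiv_char} is harmless but superfluous: in a category of interest (being semi-abelian) Bourn-normal and normal monomorphisms coincide, so the statement to be proved is literally the definition of a characteristic monomorphism.

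Part (ii), however, takes a genuinely different route from the paper, and it is correct. The paper obtains (ii) by combining (i) and Proposition~\ref{proposition:X_centerless_c_X_centralizer_free} with Theorem~\ref{theorem:one_step}; since the left-hand side of the equivalence in Theorem~\ref{theorem:one_step} includes the existence of a generic split extension with kernel $[X]$, that route passes through Theorem~\ref{theorem:char_of_char} and the $\C^\two$-machinery, and, as you observe, tacitly requires a split extension classifier for $[X]$ --- a nontrivial point, since categories of interest are only action accessible. Your argument instead proves directly that $[X]$ is \pinj{}: given a protosplit $\kappa:[X]\to A$, characteristicness of $\conj_X$ (from (i)) makes $\kappa\conj_X$ Bourn-normal, and the classifying morphism $\Lambda:A\to[X]$ of the resulting split extension satisfies $\Lambda\kappa=1_{[X]}$, after which Proposition~\ref{proposition:proto-complete+centerless=strong-complete} concludes. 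The step you flag as delicate does close, and without any internal-language computation: let $R$ be the equivalence relation on $A$ to which $\kappa\conj_X$ is Bourn-normal; the pullback along $\kappa$ of the split extension determined by $R$ is, by the split short five lemma, isomorphic (over $1_X$ and $1_{[X]}$) to the restricted relation $(\kappa\times\kappa)^{-1}(R)$, which is an equivalence relation on $[X]$ with zero class $\conj_X$; since in a pointed protomodular category a monomorphism is Bourn-normal to at most one equivalence relation, and since by Proposition~\ref{proposition:X_centerless_c_X_centralizer_free}(ii) together with the identity $p_2k=\conj_X$ the generic split extension $([X]\ltimes X,p_1,p_2)$ is such an equivalence relation, the two coincide; its classifier is $1_{[X]}$, whence $\Lambda\kappa=1_{[X]}$ by functoriality of classifiers under pullback. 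So your proof is sound, stays entirely within Sections \ref{section:trivial_center_and_centralizer} and \ref{section:complete}, and in fact supplies (indeed, renders unnecessary) the existence hypothesis that the paper's appeal to Theorem~\ref{theorem:one_step} leaves implicit.
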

\begin{proof}
Recall that for such a variety of universal algebras a subobject $S \leq Y$
is normal if and only if for each $s$ in $S$, each $y$ in $Y$
and each binary operation $*$ (excluding addition), $s*y$ and $y*s$ are in
 $S$. Recall also that $[X,X]$ is the subalgebra of $X$ generated by
 elements of the form $x_1*x_2$ where $x_1$ and $x_2$ are elements of $X$
 and $*$ is a binary operation (exlcuding addition). Now suppose $X$ is perfect
 (i.e. $[X,X]=X$), $X$ is a normal
subobject of $Y$, and $Y$ is a normal subobject of $Z$.
 Since $\C$ is category of interest for binary operations $*$ and $\sqbullet$
 we known that for some postive integer $n$ there are binary operations $*_1,...,*_n$ and
 $\sqbullet_1,...,\sqbullet_n$ and a term $w$ such that (in particular) for
 all $x_1,x_2$ in
 $X$ and $z$ in $Z$ 
$(x_1*x_2)\sqbullet z = w(x_1*_1(x_2\sqbullet_1z),...,x_1*_m(x_2\sqbullet_m z),
x_2*_{m+1} (x_1\sqbullet_{m+1} z),...,x_2*_n(x_1\sqbullet_n z))$, where
$m$ is an integer between $0$ and $n$.
 Therefore, since each $x_j\sqbullet_i z$ is in $Y$ it follows that
 $x_k*_l(x_j\sqbullet z)$ is in $X$ and hence
 $(x_1*x_2)\sqbullet z$ is in $X$. A
similar calculation shows that $z\sqbullet (x_1*x_2)$ is in $X$.
Since $X$ is perfect we know that $X$ is generated by products and hence 
 via the previous calculations is normal in $Z$. This proves (i).
Combining (i) with Proposition
\ref{proposition:X_centerless_c_X_centralizer_free} and Theorem
\ref{theorem:one_step} we obtain (ii).
\end{proof}
\begin{remark}
Applying the previous proposition to the category of Lie algebras shows that
a perfect Lie algebra with trivial center has derivation algebra \inj{}.
\end{remark}
\providecommand{\bysame}{\leavevmode\hbox to3em{\hrulefill}\thinspace}
\providecommand{\MR}{\relax\ifhmode\unskip\space\fi MR }
\providecommand{\MRhref}[2]{%
  \href{http://www.ams.org/mathscinet-getitem?mr=#1}{#2}
}
\providecommand{\href}[2]{#2}

\end{document}